\def\endfigure{\end@float}  
\theoremstyle{plain}  
\def\x{\boldsymbol{x}}
\def\v{\mathbf{v}}
\def\V{\mathbf{V}}
\def\e{\mathbf{e}}
\def\f{\mathbf{f}}
\def\h{\mathbf{h}}
\def\H{\mathbb{H}}
\def\m{\mathbf{m}}
\def\M{\mathbf{M}}
\def\n{\mathbf{n}}
\def\d{\mathrm{d}}
\def\eff{\mathrm{eff}}
\def\eps{\epsilon}
\def\0{\mathbf{0}}
\def\S{\mathbf{S}}
\def\R{\mathbf{R}}
\def\<{\langle}
\def\>{\rangle}
\newcommand{\LOD}{\mathrm{LOD}}
\definecolor{lzcol}{rgb}{1, 0, 0}
\begin{document}


\title{Optimal Error Estimates of a Linearized Backward Euler Localized Orthogonal Decomposition for the Landau-Lifshitz Equation }

\shorttitle{Optimal Error Estimates of a Linearized Backward Euler LOD for the LL Equation }

\author{%
{\sc
Zetao Ma} \\[2pt]
School of Mathematical Sciences, Shanghai Jiao Tong University, Shanghai 200240, P.R. China\\[6pt]
{\sc
Rui Du\thanks{Corresponding author. Email: durui@suda.edu.cn}} \\[2pt]
School of Mathematical Sciences and Mathematical Center for Interdisciplinary Research, Soochow University, Suzhou 215006, P.R. China\\[6pt]
{\sc and}\\[6pt]
{\sc Lei Zhang}\\[2pt]
School of Mathematical Sciences, MOE-LSC and Institute of Natural Sciences, Shanghai Jiao Tong University, Shanghai 200240, P.R. China
}

\shortauthorlist{Z. T. Ma \emph{et al.}}

\maketitle

\begin{abstract}
{We introduce a novel spatial discretization technique for the reliable and efficient simulation of magnetization dynamics governed by the Landau-Lifshitz (LL) equation. The overall discretization error is systematically decomposed into temporal and spatial components. The spatial error analysis is conducted by formulating the LL equation within the framework of the Localized Orthogonal Decomposition (LOD) method. Numerical examples are presented to validate the accuracy and approximation properties of the proposed scheme.}
{finite element method; localized orthogonal decomposition; two-level multiscale methods; computational micromagnetism; Landau-Lifshitz equation.}
\end{abstract}

\section{Introduction}
\label{sec1}
The magnetization dynamics in a ferromagnetic material is governed by the Landau-Lifshitz (LL) equation \cite{landau1992theory, gilbert1955lagrangian}, which is crucial for understanding and predicting the behavior of magnetic materials, forming the theoretical basis for modern magnetism studies and applications in spintronics, magnetic memory devices, and other advanced technologies\cite{gutfleisch2011magnetic}. Denoting the magnetization vector by \(\mathbf{m}: \Omega \times (0,T] \rightarrow \mathbb{S}^2\), where \(\mathbb{S}^2\) is the unit sphere and \(\Omega \subset \mathbb{R}^d\) (with \(d=2,3\)) is a bounded, smooth, and convex domain. The dimensionless form of the LL equation is given by:
\begin{subequations}\label{LLeq}
    \begin{align}
        \partial_t \mathbf{m} &= -\mathbf{m} \times \mathbf{h}_{\text{eff}} - \alpha \mathbf{m} \times (\mathbf{m} \times \mathbf{h}_{\text{eff}}), &&\mathbf{x} \in \Omega, t \in (0,T], \\
        \kappa(\mathbf{x}) \frac{\partial \mathbf{m}}{\partial \mathbf{n}} &= \mathbf{0}, &&\mathbf{x} \in \partial \Omega, t \in (0,T], \\
        \mathbf{m}(\mathbf{x}, 0) &= \mathbf{m}_0(\mathbf{x}), &&\mathbf{x} \in \Omega,
    \end{align}
\end{subequations}
where \(\alpha > 0\) is the dimensionless Gilbert damping constant \cite{gilbert1955lagrangian}, \(\mathbf{n}\) is the unit outward normal vector, and the initial data \(\mathbf{m}_0\) satisfies \(|\mathbf{m}_0| = 1\). The effective field \(\mathbf{h}_{\text{eff}}\) is the focus of this work.

Following the framework of many analytical studies, we consider the exchange interaction as the dominant contribution to the effective field. Specifically, we define:
\begin{equation}\label{effectivefield}
    \mathbf{h}_{\text{eff}}[\mathbf{m}] = \nabla \cdot (\kappa \nabla \mathbf{m}),
\end{equation}
where \(\kappa(\mathbf{x})\) represents rough coefficients in \(L^{\infty}(\Omega)\), which are non-periodic and exhibit non-separable scales. The coefficient \(\kappa\) satisfies the uniform ellipticity condition:
\begin{equation*}
    \kappa_{\text{min}} |\xi|^2 \le \xi^\top \kappa(\mathbf{x}) \xi \le \kappa_{\text{max}} |\xi|^2, \quad \forall \xi \in \mathbb{R}^d, \text{ a.e. } \mathbf{x} \in \Omega.
\end{equation*}
The corresponding Landau-Lifshitz energy functional is:
\begin{equation}\label{LLenergy}
    F[\mathbf{m}] = \frac{1}{2} \int_{\Omega} \kappa |\nabla \mathbf{m}|^2 \, d\mathbf{x} = -\frac{1}{2} \left( \mathbf{h}_{\text{eff}}[\mathbf{m}], \mathbf{m} \right).
\end{equation}

Accurately modeling spin-wave dynamics in disordered magnetic solids—crucial for advancing both fundamental wave physics in complex media and next-generation, low-power technologies \cite{liu1979spin,gutfleisch2011magnetic,alouges2021stochastic}—requires treating the exchange term with rough coefficients. This is a general feature of ferromagnets that does not assume ergodicity or scale separation. Existing studies of the Landau-Lifshitz equation with multiscale features offer a valuable foundation \cite{santugini2007homogenization,alouges2015homogenization,choquet2018homogenization,alouges2021stochastic,leitenmaier2022homogenization,chen2022multiscale,UpscalingHMM,leitenmaier2023finite}, yet the specific challenge of rough coefficients without structural assumptions remains unresolved. This work is motivated by the need to close this gap.

Obtaining high-resolution solutions for multiscale problems—such as those with rough coefficients, periodic structures, or scale-separated parameterizations such as \(a(\mathbf{x}/\varepsilon)\) (where \(\varepsilon \ll 1\) as in \cite{leitenmaier2022heterogeneous})—is computationally expensive due to the fine mesh required to resolve \(\varepsilon\)-scale features. Moreover, assumptions of periodicity or scale separation often do not hold for real physical materials \cite{liu1979spin,gutfleisch2011magnetic}. Numerical homogenization methods have been developed to address these limitations in cost and generality. These approaches solve multiscale PDEs by constructing localized basis functions on a coarse mesh, preserving the sparsity and complexity of standard FEM while ensuring accuracy with respect to the coarse mesh size. Typical methods include heterogeneous multiscale methods \cite{ee03,abdulle2014analysis,ming2005analysis}, numerical homogenization \cite{dur91,ab05,weh02}, multiscale finite element methods \cite{Arbogast_two_scale_04,egw10,eh09}, the multiscale spectral generalized finite element method \cite{babuska2011optimal,efendiev2013generalized,ma2022novel}, the localized orthogonal decomposition (LOD) \cite{maalqvist2014localization,henning2013oversampling,kornhuber2018analysis,hauck2023super}, flux norm homogenization \cite{berlyand2010flux,Owhadi:2011}, (generalized) rough polyharmonic splines \cite{zhang_RPS,liuGRPS}, and gamblets \cite{owhadi2017gamblets,owhadi2017multigrid}. Comprehensive overviews are available in  \cite{book_Peterseim,book_owhadi2019operator,chung2023multiscale,owhadi2017multigrid,altmann2021numerical}.

The main result of this paper is an error analysis between the exact solution \(\mathbf{m}\) and the numerical homogenization solution \(\mathbf{M}_{\text{LOD}}\). We decompose the total error into independent temporal and spatial components—a departure from standard finite element error analysis. To bound specific terms in the spatial error analysis, our approach relies on a higher-regularity temporal error estimate, employing a strategy analogous to \cite{henning2022superconvergence}. The spatial error is then further split into the LOD projection error and the discrepancy between the LOD projection of the temporally discretized solution and \(\mathbf{M}_{\text{LOD}}\). In this work, we set \(\kappa = 1\) (so that \(\mathbf{h}_{\text{eff}} = \Delta \mathbf{m}\)). This simplification serves a dual purpose: (i) it circumvents the technical difficulties and stricter regularity conditions that arise from applying differential operators—especially of higher order—to variable coefficients, and (ii) it aligns with the standard analytical assumption \(\kappa \in C^{\infty}(\Omega)\) for the multiscale Landau-Lifshitz equation \cite{leitenmaier2022homogenization, UpscalingHMM}, which inherently governs the behavior of all derivatives of \(\kappa\). This approach, common in numerical homogenization for nonlinear PDEs (e.g., within the LOD framework \cite{henning2022superconvergence, henning2023optimal}), thereby allows for a simpler exposition.

The novel theoretical contributions include a convergence proof for the Landau-Lifshitz equation in the LOD space, demonstrating an \(L^{\infty}(0,T;L^2(\Omega))\)-convergence rate of order \(O(\tau + H^3)\), where \(\tau\) is the time step size and \(H\) the coarse mesh size. This represents a significant improvement over standard finite element methods, which achieve only \(O(\tau + h^2)\) convergence in the \(L^2\)-norm \cite{gao2014optimal,anrongbackward}. Unlike standard FEM approaches for the Landau-Lifshitz equation \cite{li2012error,li2013unconditional,gao2014optimal,an2016LLCN}, our methodology is based on a combined framework of temporal-spatial decoupling and spatial error decomposition, along with a novel two-step error splitting. This splitting critically relies on establishing a temporal error estimate with higher regularity to control key spatial terms—a technique that diverges from conventional analysis and follows the strategy in \cite{henning2022superconvergence}.

\paragraph{Organization:} The paper is structured as follows. Section \ref{sec: Localized Orthogonal Decomposition} reviews the Localized Orthogonal Decomposition (LOD) approach for elliptic problems, including its approximation and localization properties. Section \ref{sec: Analysis of backward Euler discretization of LL equation in LOD space} details the temporal and spatial error analysis of the Landau-Lifshitz (LL) equation, with particular emphasis on the spatial estimate within the LOD framework. Section \ref{sec: Numerical results} presents numerical examples to validate the theoretical estimates. The detailed proofs of the main theorems are provided in Section \ref{sec11}, and the Appendix summarizes key technical inequalities and the variants of the backward Euler schemes.

\paragraph{Notation:} We write $\mathbb{H}^1(\Omega) = [H^1(\Omega)]^3$ and $\mathbb{L}^2(\Omega) = [L^2(\Omega)]^3$ for the vector-valued Sobolev and Lebesgue spaces, respectively.

\section{An Ideal Multiscale Method: Localized Orthogonal Decomposition}
\label{sec: Localized Orthogonal Decomposition}

\subsection{Ideal Multiscale Method and Approximation Properties}
\label{subsec: Ideal multiscale method and approximation properties}
Throughout this section, the domain $\Omega \subseteq \mathbb{R}^d$ ($d=2,3$) is assumed to be bounded, smooth, and convex. We first provide a concise overview of the LOD, recalling several approximation results that are fundamental to our later error estimates. A thorough treatment of these results, including proofs and applications in low-regularity contexts, is available in \cite{maalqvist2014localization, hauck2023super, zhang_RPS, liuGRPS}.

Let \( P_H: L^2(\Omega) \to V_H \) denote the \( L^2 \)-orthogonal projection onto the finite-dimensional subspace \( V_H \subset H^1(\Omega) \). The kernel of \( P_H \), defined by $W := \ker(P_H) = \{ w \in H^1(\Omega) \mid P_H w = 0 \}$, 
which is a closed subspace of \( H^1(\Omega) \). This induces the \( L^2 \)-orthogonal decomposition $H^1(\Omega) = V_H \oplus W$.

Let \( (\cdot,\cdot) \) denote the \( L^2 \)-inner product, and let \( A(\cdot, \cdot) \) be a symmetric, coercive bilinear form defining an inner product on \( H^1(\Omega) \). With respect to the \( A \)-inner product, \( H^1(\Omega) \) admits a second orthogonal decomposition:
\[
H^1(\Omega) = V_{\LOD} \oplus_A W,
\]
where the \textbf{localized orthogonal decomposition (LOD)} space \( V_{\LOD} \) is defined as the \( A \)-orthogonal complement of \( W \):
\begin{equation}\label{def_LOD_space}
    V_{\LOD} = \{ v \in H^1(\Omega) \mid A(v, w) = 0 \quad \forall w \in W \}.
\end{equation}
This construction yields a low-dimensional multiscale space \( V_{\LOD} \) with \( \dim(V_{\LOD}) = \dim(V_H) \), which is well-suited for numerical approximation of multiscale problems. Let \( u_{\LOD} \in V_{\LOD} \) be the Ritz projection of $u$  with 
\begin{equation}\label{ritz_LOD}
A(u_{\mathrm{LOD}}, v) = A(u, v) \qquad \forall v \in V_{\mathrm{LOD}}.
\end{equation}
This is particularly evident when expressed in the standard notation
\begin{equation}\label{eq:LOD-space-def}
V_{\LOD} = (\mathrm{Id} - Q)V_H,
\end{equation}
where the operator $Q: V_H \to W$ is commonly referred to as the correction operator.

The following result quantifies the global approximation quality of the LOD method over the whole domain $\Omega$ for elliptic problems:
\begin{theorem}[Global approximation error]\label{theorem_convergence_rate}
Let \( u \) be the solution to the elliptic problem with right-hand side \( f \in H^s(\Omega) \), \( s = 0, 1 \), and let \( u_{\text{LOD}} \in V_{\text{LOD}} \) be defined by \eqref{ritz_LOD}. Then,
 \[
 \| u - u_{\text{LOD}} \|_{L^2(\Omega)} + H \| u - u_{\text{LOD}} \|_{H^1(\Omega)} \le C H^{s+1} \| f \|_{H^s(\Omega)},
 \]
where the constant \( C > 0 \) is independent of \( H \), \( u \), and the regularity of \( u \).
\end{theorem}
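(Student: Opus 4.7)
The plan is to leverage the two defining features of the LOD construction---the $A$-orthogonal decomposition $H^1(\Omega) = V_{\LOD} \oplus_A W$ together with the identification $W = \ker(P_H)$---in a way that uses the regularity of $f$ but never the regularity of $u$ itself. The crucial structural observation is that the Ritz condition \eqref{ritz_LOD} forces the error $e := u - u_{\LOD}$ to be $A$-orthogonal to $V_{\LOD}$, which by the decomposition means $e \in W$, and hence $P_H e = 0$.

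Starting from the coercivity of $A$, I would write
\begin{equation*}
c\,\|\nabla e\|_{L^2(\Omega)}^2 \le A(e,e) = A(u,e) - A(u_{\LOD}, e) = (f, e),
\end{equation*}
where the final identity combines the weak form $A(u,v) = (f,v)$ with the $A$-orthogonality $A(u_{\LOD}, e) = 0$ (valid since $u_{\LOD} \in V_{\LOD}$ and $e \in W$). The key trick is then to insert the $L^2$-projection: since $P_H e = 0$ and $P_H$ is $L^2$-self-adjoint, one has $(P_H f, e) = (f, P_H e) = 0$, so $(f, e) = (f - P_H f, e)$.

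Two standard approximation properties of $P_H$ on a quasi-uniform mesh of size $H$ then close the argument: (i) $\|v - P_H v\|_{L^2(\Omega)} \le C H^s \|v\|_{H^s(\Omega)}$ for $s = 0, 1$, applied to $v = f$; and (ii) $\|w\|_{L^2(\Omega)} \le C H \|\nabla w\|_{L^2(\Omega)}$ for every $w \in W$, which follows from (i) with $s=1$ applied to $v = w$ together with $P_H w = 0$. Cauchy--Schwarz and these two estimates yield
\begin{equation*}
c\,\|\nabla e\|_{L^2(\Omega)}^2 \le \|f - P_H f\|_{L^2(\Omega)}\,\|e\|_{L^2(\Omega)} \le C H^{s+1} \|f\|_{H^s(\Omega)}\,\|\nabla e\|_{L^2(\Omega)},
\end{equation*}
from which $\|\nabla e\|_{L^2(\Omega)} \lesssim H^{s+1}\|f\|_{H^s(\Omega)}$ follows after dividing by $\|\nabla e\|_{L^2(\Omega)}$. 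A second application of (ii) to $e$ itself converts this into the $L^2$-bound with one additional factor of $H$, and combining the two estimates produces the stated inequality.

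The main subtlety is conceptual rather than technical: the improvement over a standard $V_H$-Galerkin scheme comes from exploiting the $P_H$-orthogonality of $e$ \emph{twice}---once on the right-hand side to replace $f$ by $f - P_H f$, and once on the left to upgrade $\|e\|_{L^2}$ to an $H^1$-seminorm scaled by $H$. Consequently, the argument is entirely free of regularity assumptions on $u$, which is precisely what makes the LOD framework suitable for the rough-coefficient setting envisaged in the introduction.
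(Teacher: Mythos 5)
Your argument is the canonical LOD approximation proof (as in Målqvist--Peterseim), and it is correct. Note, however, that the paper itself does not prove Theorem~\ref{theorem_convergence_rate}; it simply cites the references \cite{maalqvist2014localization, hauck2023super, zhang_RPS, liuGRPS}, so there is no in-paper argument to compare against.

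Two things are worth flagging. First, carrying your chain of inequalities to the end actually yields a bound one order of $H$ \emph{stronger} than what the theorem states: from $c\,\|\nabla e\|_{L^2}^2 \le (f - P_H f, e) \le C H^s \|f\|_{H^s}\,\cdot\, C H \|\nabla e\|_{L^2}$ you get $\|\nabla e\|_{L^2} \le C H^{s+1}\|f\|_{H^s}$, and one more application of $\|e\|_{L^2} \le CH\|\nabla e\|_{L^2}$ gives $\|e\|_{L^2} \le CH^{s+2}\|f\|_{H^s}$, i.e.\ $\|u - u_{\LOD}\|_{L^2} + H\|u - u_{\LOD}\|_{H^1} \le C H^{s+2}\|f\|_{H^s}$. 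For $s = 1$ this is $O(H^3)$ in $L^2$ and $O(H^2)$ in $H^1$, which are exactly the rates quoted in the remark after Theorem~\ref{theorem_localization} and in Theorem~\ref{theorem_total}, and which the $H^{s+1}$ exponent as printed in Theorem~\ref{theorem_convergence_rate} would fail to deliver. The printed exponent therefore appears to be a typo for $s+2$, and your proof recovers the rate the paper actually relies on downstream. Second, a minor gap in the write-up of step (ii): $\|w\|_{L^2} \le CH\|\nabla w\|_{L^2}$ for $w \in W$ does not follow verbatim from (i) with $s = 1$, which gives only $\|w\|_{L^2} \le CH\|w\|_{H^1}$ with the full $H^1$ norm on the right; you need either an absorbing step (valid once $CH < 1$) or to invoke directly the quasi-uniform-mesh estimate $\|v - P_H v\|_{L^2} \le CH\|\nabla v\|_{L^2}$ for the $L^2$-projection. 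Either fix is standard and does not affect the conclusion.
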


\subsection{Localization Property} 
\label{subsec: Localization properties}
This subsection presents the localization properties of the global problem \eqref{def_LOD_space}, where the set of global basis functions is defined by \eqref{eq:LOD-space-def}. Let $\mathcal{T}_H$ denote a coarse mesh of $\Omega$ with characteristic size $H$. For any $K \in \mathcal{T}_H$, the $\ell$-layer patch $\Omega^\ell(K)$ is defined recursively: $\Omega^0(K) := K$ and for $\ell \geq 1$,
\begin{equation*}
    \Omega^\ell(K) := \bigcup_{\substack{K \in \mathcal{T}_H \\ \overline{K} \cap \overline{\Omega^{\ell-1}(K)} \neq \varnothing}} K .
\end{equation*}
Denote by $W(\Omega^\ell(K))$ the restriction of the fine-scale space $W$ to the patch $\Omega^\ell(K)$.
Then, we can define the \textbf{localized} bases over $\Omega^\ell(K)$. For each $v_H \in V_H$ and $K \in \mathcal{T}_H$, the local correction operator $Q_K^{\ell}(v_H) \in W(\Omega^\ell(K))$ is defined as
\begin{equation}
   A(Q_K^{\ell}(v_H), w^{\ell}) =- A_K(v_H, w^{\ell}) \qquad \forall w^{\ell} \in W(\Omega^\ell(K)),
\end{equation}
where $ A_K(\cdot, \cdot)$ represents the restriction of $A(\cdot, \cdot)$ on some element $K$. With this, the correction function is given by $R_\ell(v_H) := v_H + \sum \limits _{K \in \mathcal{T}_H} Q_{K}^{\ell}(v_H).$ Finally, the \textbf{localized} space over $\Omega^\ell(K)$ is defined by
\begin{equation}\label{local_LOD}
    V_{\LOD}^{\ell}:=\{R_\ell(v_H) \mid  v_H \in V_H \}.
\end{equation}
The following result quantifies the localization error between the global LOD solution $u_{\LOD}$ defined by \eqref{ritz_LOD} and its localized counterpart on patch $\Omega^\ell(K)$:
\begin{theorem}[Localization error]\label{theorem_localization}
Let \( u_{\text{LOD}} \in V_{\text{LOD}} \) be defined by \eqref{ritz_LOD}, and let \( u^{\ell}_{\text{LOD}} \) denote its Ritz projection in \( V_{\text{LOD}}^{\ell} \).
Then,
\[
\| u_{\LOD} - u^{\ell}_{\LOD} \|  \le C \exp(-\rho \ell) \|f \|_{H^1},
\]
where $C > 0$ is as in Theorem \ref{theorem_convergence_rate}, and $\rho > 0$ depends only on the contrast $\kappa_{\max}/\kappa_{\min}$ but not on $h$, $H$, or the oscillations in $\kappa$.  
\end{theorem}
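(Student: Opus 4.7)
The plan is the classical Målqvist--Peterseim exponential-decay argument. Writing $u_{\LOD} = (\mathrm{Id} - Q)\, P_H u_{\LOD}$ and $u_{\LOD}^{\ell} = (\mathrm{Id} - Q^{\ell})\, P_H u_{\LOD}$ via \eqref{eq:LOD-space-def}, the error decomposes as
\[
u_{\LOD} - u_{\LOD}^{\ell} = (Q^{\ell} - Q)\, P_H u_{\LOD},
\]
so it suffices to control $\|(Q - Q^{\ell}) v_H\|_A$ for $v_H = P_H u_{\LOD}$ in the energy norm $\|\cdot\|_A := A(\cdot,\cdot)^{1/2}$. The resulting coarse factor is then absorbed by the chain $\|P_H u_{\LOD}\|_A \lesssim \|u\|_A \lesssim \|f\|_{L^2} \le \|f\|_{H^1}$, which follows from $L^2$-stability of $P_H$, the Ritz contraction \eqref{ritz_LOD}, and elliptic coercivity; the passage from energy norm to the norm appearing on the left of the theorem is then completed by Aubin--Nitsche, exactly as in Theorem~\ref{theorem_convergence_rate}.

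The heart of the proof is exponential decay of the element correctors. Split $Q = \sum_{K \in \mathcal{T}_H} Q_K$, where $Q_K(v_H) \in W$ solves $A(Q_K(v_H), w) = -A_K(v_H, w)$ for all $w \in W$, and likewise $Q^{\ell} = \sum_K Q_K^{\ell}$. I would first show
\[
\| Q_K(v_H) \|_{A,\, \Omega \setminus \Omega^{\ell}(K)} \le C\, e^{-\rho \ell}\, \| v_H \|_{A, K}
\]
by a Caccioppoli-type ring-to-ring contraction: fix a Lipschitz cut-off $\eta$ that vanishes on $\Omega^{\ell-1}(K)$ and equals $1$ outside $\Omega^{\ell}(K)$; since $\eta\, Q_K(v_H)$ is generally not in $W$, correct it by subtracting a suitable element of $V_H$ to produce an admissible test. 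Because $\eta \equiv 0$ on $K$ for $\ell \ge 1$, the driving source vanishes, and Cauchy--Schwarz together with the $L^2$-stability and $H^1$-quasi-locality of $P_H$ yields
\[
\| Q_K(v_H) \|_{A,\, \Omega \setminus \Omega^{\ell}(K)}^2 \le c\, \| Q_K(v_H) \|_{A,\, \Omega^{\ell}(K) \setminus \Omega^{\ell-1}(K)}^2,
\]
with $c \in (0,1)$ depending only on $\kappa_{\max}/\kappa_{\min}$ and on the shape regularity of $\mathcal{T}_H$. Iterating over the annular rings delivers geometric decay with rate $\rho = -\tfrac{1}{2}\log(c/(1+c)) > 0$. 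The per-element truncation error $\|Q_K - Q_K^{\ell}\|_A$ is then obtained from Galerkin quasi-optimality of $Q_K^{\ell}$ in $W(\Omega^{\ell}(K))$, testing against a cut-off modification of $Q_K$ itself; summation over $K$ using the finite overlap of the patches gives
\[
\| (Q - Q^{\ell}) v_H \|_A \le C\, e^{-\rho \ell}\, \| v_H \|_A \qquad \forall v_H \in V_H,
\]
and combining with the reduction above yields the claim.

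The decisive technical obstacle is the Caccioppoli step: producing an admissible test function in $W$ (and in $W(\Omega^{\ell}(K))$) from $\eta\, Q_K(v_H)$ forces the subtraction of a coarse correction, which in turn introduces a commutator between $P_H$ and multiplication by $\eta$. Bounding this commutator uniformly in $H$ and in the oscillations of $\kappa$, so that the contraction constant $c$ remains strictly below $1$, is the crux of the argument, and is precisely the mechanism through which the dependence of $\rho$ on the contrast $\kappa_{\max}/\kappa_{\min}$ is pinned down.
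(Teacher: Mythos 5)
The paper does not prove Theorem~\ref{theorem_localization}: it is stated without proof and cited from the LOD literature (\cite{maalqvist2014localization,hauck2023super,zhang_RPS,liuGRPS}), as the remark immediately following the theorem makes explicit, so there is no in-paper argument to compare against. Your proposal is a faithful reconstruction of the standard M{\aa}lqvist--Peterseim exponential-decay argument that underlies those references. The ingredients you identify --- the decomposition $u_{\LOD} - u_{\LOD}^{\ell} = (Q^{\ell}-Q)\, P_H u_{\LOD}$ via \eqref{eq:LOD-space-def}, the element-wise ring-to-ring Caccioppoli contraction driven by a cut-off $\eta$ vanishing on $\Omega^{\ell-1}(K)$, the need to subtract a coarse correction from $\eta\, Q_K(v_H)$ to obtain an admissible test function in $W$, the resulting commutator between $P_H$ and multiplication by $\eta$ as the mechanism through which the contrast $\kappa_{\max}/\kappa_{\min}$ enters $\rho$, and Galerkin quasi-optimality of $Q_K^{\ell}$ on $W(\Omega^{\ell}(K))$ followed by finite-overlap summation --- are exactly the ones used there. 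Two routine points would deserve a sentence each in a full write-up: the patch summation over $K$ contributes a polynomial factor $\mathcal{O}(\ell^{d/2})$ which is conventionally absorbed into the exponential by slightly reducing $\rho$; and the chain $\|P_H u_{\LOD}\|_A \lesssim \|u_{\LOD}\|_A \le \|u\|_A$ rests on $(\mathrm{Id}-Q)$ being an $A$-isomorphism from $V_H$ onto $V_{\LOD}$ with energy-bounded inverse, not on $L^2$-stability of $P_H$ alone. Neither affects the validity of your outline.
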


\begin{remark}
This estimate reveals the exponential decay of the localization error with respect to the oversampling parameter \(\ell\), a key characteristic of the LOD method. The construction of these localized basis functions is detailed in \cite{maalqvist2014localization, hauck2023super, zhang_RPS, liuGRPS}. With \( \ell \geq 3 \log(H^{-1}) / \rho \), the optimal convergence rates of \( O(H^3) \) in the \( L^2 \) norm and \( O(H^2) \) in the \( H^1 \) norm are preserved, provided that \( f \in H^1(\Omega) \).
\end{remark}

\begin{remark}
The localization property of LOD can be improved, as demonstrated by the Super-localized Orthogonal Decomposition (SLOD) \cite{hauck2023super}. SLOD has been extended to reaction-convection-diffusion equations \cite{bonizzoni2024reduced} and convection-dominated diffusion problems \cite{bonizzoni2022super}.
\end{remark}

\begin{theorem}[Approximation error with localization]\label{theorem_total}
Let \( u \) be the solution to the elliptic problem, and let \( u^{\ell}_{\text{LOD}} \) denote its Ritz projection in \( V_{\text{LOD}}^{\ell} \). Then,
 \[
 \| u - u^{\ell}_{\text{LOD}} \|_{L^2} \le C \big( H^3 + \exp(-\rho \ell) \big) \| f \|_{H^1},
 \]
 \[
 \| u - u^{\ell}_{\text{LOD}} \|_{H^1} \le C \big( H^2 + \exp(-\rho \ell) \big) \| f \|_{H^1},
 \]
 where the constants \( C \) and \( \rho \) are as in Theorem \ref{theorem_localization}.
\end{theorem}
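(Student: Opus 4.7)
The plan is to treat the full error $\|u - u^\ell_{\mathrm{LOD}}\|$ as a composition of the global LOD approximation error and the localization error, and to handle them independently via the triangle inequality. Concretely, I would insert the ideal LOD Ritz projection $u_{\mathrm{LOD}} \in V_{\mathrm{LOD}}$ as a pivot and write
\[
\|u - u^\ell_{\mathrm{LOD}}\|_{X} \;\le\; \|u - u_{\mathrm{LOD}}\|_{X} \;+\; \|u_{\mathrm{LOD}} - u^\ell_{\mathrm{LOD}}\|_{X}
\]
for $X = L^2(\Omega)$ and $X = H^1(\Omega)$. This immediately decouples the multiscale approximation error (controlled by the coarse mesh size $H$) from the localization/truncation error (controlled by the oversampling parameter $\ell$).

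Next, for the first term on the right, I would invoke Theorem~\ref{theorem_convergence_rate} with $s = 1$, which is applicable because the hypothesis $f \in H^1(\Omega)$ is in force. This directly furnishes the target powers of $H$: the $L^2$-contribution $C H^{3} \|f\|_{H^1}$ and the $H^1$-contribution $C H^{2}\|f\|_{H^1}$ (the latter after dividing the combined estimate by $H$). For the second term, I would apply Theorem~\ref{theorem_localization}, which yields the exponential decay $C \exp(-\rho \ell) \|f\|_{H^1}$. Summing the two bounds in each norm gives the claimed estimates.

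The main subtlety, and what I expect to be the only delicate step, is ensuring that the localization bound of Theorem~\ref{theorem_localization} applies in both the $L^2$- and $H^1$-norm with the same decay rate. Theorem~\ref{theorem_localization} is stated in a generic norm: it is naturally an energy-norm (hence $H^1$-equivalent) estimate, since it stems from the exponential decay of the element correctors. For the $H^1$-part of the claim this is immediate. For the $L^2$-part, the cleanest route is to note that the exponential factor $\exp(-\rho \ell)$ dominates any polynomial factor in $H$, so one may either absorb the $L^2$-by-$H^1$ bound (using that $\Omega$ is bounded and convex, so Poincaré applies to the zero-mean difference $u_{\mathrm{LOD}} - u^\ell_{\mathrm{LOD}}$) or, more sharply, reprove the localization estimate in $L^2$ via an Aubin–Nitsche duality argument to obtain an extra power of $H$ if desired — which is not needed here since the theorem statement only asks for $\exp(-\rho \ell)$.

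Putting these pieces together gives
\[
\|u - u^\ell_{\mathrm{LOD}}\|_{L^2} \le C\bigl(H^3 + \exp(-\rho \ell)\bigr)\|f\|_{H^1}, \qquad
\|u - u^\ell_{\mathrm{LOD}}\|_{H^1} \le C\bigl(H^2 + \exp(-\rho \ell)\bigr)\|f\|_{H^1},
\]
with $C$ and $\rho$ inherited from the previous theorems. Because both constituent estimates are already established in Theorems~\ref{theorem_convergence_rate} and \ref{theorem_localization}, the entire argument is little more than a triangle inequality; there is no real combinatorial obstacle, and the proof should be short.
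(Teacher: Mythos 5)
Your triangle-inequality decomposition with $u_{\mathrm{LOD}}$ as pivot, followed by Theorem~\ref{theorem_convergence_rate} (with $s=1$) for the first term and Theorem~\ref{theorem_localization} for the second, is precisely the standard argument the paper implicitly relies on; the paper states Theorem~\ref{theorem_total} without proof, deferring to the cited LOD references. One small inaccuracy: the difference $u_{\mathrm{LOD}} - u^{\ell}_{\mathrm{LOD}}$ has no reason to have zero mean, so invoking Poincar\'e there is misplaced, but since the trivial embedding $\|v\|_{L^2(\Omega)} \le \|v\|_{H^1(\Omega)}$ (which you also mention) already suffices, the argument stands.
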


\section{Analysis of Backward Euler Discretization of LL equation in LOD Space}
\label{sec: Analysis of backward Euler discretization of LL equation in LOD space}

This section outlines the analytical framework for error estimation, which is decomposed into temporal and spatial components. To establish optimal convergence rates, the following regularity assumptions are required.

\begin{assumption}\label{assumption} 
    We assume the following conditions hold:
    \begin{enumerate}
        \item The solution possesses sufficient regularity. Specifically, in three-dimensional domains, we assume:
        \begin{equation}\label{ass:regularity_estimate}
        \|\mathbf{m}_0\|_{H^4(\Omega)} + \| \partial_t \mathbf{m} \|_{L^{\infty}(0,T;H^4(\Omega))} 
        + \| \partial_{tt} \mathbf{m} \|_{L^{2}(0,T;H^2(\Omega))} 
        + \|\mathbf{m} \|_{L^{\infty}(0,T;W^{2,4}(\Omega))} \le C_{\mathrm{reg}},
        \end{equation}
        where $C_\mathrm{reg}$ is a generic constant. We note that in two-dimensional settings, 
        the assumption $\|\mathbf{m} \|_{L^{\infty}(0,T;W^{2,4}(\Omega))}$ can be relaxed to 
        $\|\mathbf{m} \|_{L^{\infty}(0,T;W^{2,2+\delta}(\Omega))}$ for some $\delta > 0$ \cite{gao2014optimal}.
        \item The domain $\Omega \subset \mathbb{R}^d$ is of class $C^{2,1}$.
    \end{enumerate}
\end{assumption}

\begin{remark} The assumption is motivated by the following considerations.
\begin{enumerate}
\item The regularity assumptions on \( \mathbf{m}_0 \) and \( \partial_t \mathbf{m} \) are standard. Similar prerequisites appear in \cite{henning2022superconvergence} for the Gross–Pitaevskii equation (Assumption (A6), Lemma 10.4). Furthermore, the need for high initial regularity is highlighted by \cite{an2016LLCN}, who require \( \mathbf{m}_0 \in H^5(\Omega) \) to secure \( L^{\infty}(0,T;H^3(\Omega)) \) regularity for the LL equation. Our assumption \( \mathbf{m}_0 \in H^4(\Omega) \) is thus consistent with, and weaker than these prior studies.
\item \label{truncation}
The regularity condition \( \partial_{tt} \mathbf{m} \in L^{2}(0,T;H^2(\Omega)) \) ensures that the temporal truncation error satisfies the bound \( \tau \sum_{k=0}^{n} \| \mathbf{R}_{\mathrm{tr}}^{k} \|_{H^2(\Omega)}^2 \le C \tau^2 \). If only the weaker condition \( \partial_{tt} \mathbf{m} \in L^{2}(0,T;L^2(\Omega)) \) holds, the corresponding estimate in the \( L^2 \)-norm remains valid.
\item The boundedness of \(\|\mathbf{m}\|_{L^{\infty}(0,T;W^{2,4}(\Omega))}\) is a common assumption in the analysis of the Landau–Lifshitz equation. While the boundedness of \(\|\nabla \mathbf{m}\|_{L^{2}(0,T;L^{2}(\Omega))}\) follows directly from the initial LL energy \cite{choquet2018homogenization, chen2022multiscale}, the boundedness of \(\|\nabla \mathbf{m}\|_{L^{\infty}(0,T;L^{\infty}(\Omega))}\) is not straightforward. Numerical evidence from \cite{leitenmaier2022homogenization, UpscalingHMM, leitenmaier_Phdthesis} suggests it may hold in certain regimes. To rigorously ensure this, we impose the \(W^{2,4}\)-regularity assumption and apply the Gagliardo–Nirenberg inequality in Appendix \ref{lem:GN_inequality}, which gives 
$\|\nabla \mathbf{m}\|_{L^{\infty}(0,T;L^{\infty}(\Omega))} \le C \|\mathbf{m}\|_{L^{\infty}(0,T;W^{2,4}(\Omega))} \le C.$
    \end{enumerate}
\end{remark}

\subsection{Discretization Scheme}

The LL equation \eqref{LLeq} can be reformulated using the unit-length constraint \(|\mathbf{m}| = 1\),
\begin{equation}\label{LLform1}
	\mathbf{m}_t - \alpha \Delta \mathbf{m} + \mathbf{m} \times \Delta \mathbf{m} = \alpha |\nabla \mathbf{m}|^2 \mathbf{m}.
\end{equation}

We discretize the time interval \([0,T]\) into \(N\) steps of size \(\tau := T/N\), with discrete times \(t_n := n\tau\) for \(n = 0,\dots,N\). Over the past two decades, several semi-implicit backward Euler schemes have been proposed for the time discretization of the LL equation \cite{cimrak2005error,gao2014optimal,anrongbackward}, differing mainly in their treatment of the nonlinear term. 

In this work, we adopt the scheme from \cite{cimrak2005error} to seek $\m_h\in\V_h$:
\begin{equation}\label{Cimrak}
    (D_{\tau} \m_h^{n+1},\v_h) + \alpha(\nabla \m_h^{n+1},\nabla \v_h)  
    - (\m_{h}^n \times \m_h^{n+1}, \v_h) 
    = \alpha\bigl((\nabla \m_h^{n} \cdot \nabla \m_h^n)\m_h^{n+1}, \v_h\bigr),
    \qquad \forall \v_h \in \V_h,
\end{equation}
where the discrete time derivative is defined as 
$\displaystyle D_{\tau} \varphi^{n+1}:=\frac{\varphi^{n+1} - \varphi^{n}}{\tau}$,
and $\V_h$ denotes a finite element space for vector-valued functions, such as the standard $P_1$ space. 

While Cimrák's analysis focuses primarily on temporal errors, it provides limited discussion of spatial discretization errors. To address this gap, we draw inspiration from the fully discrete finite element frameworks in \cite{gao2014optimal} and \cite{anrongbackward} (detailed in Appendix \ref{subsec: Some Other Backward Euler Schemes}), which reformulate the nonlinear term in \eqref{Cimrak} to achieve optimal error estimates. Building on these approaches, we conduct a rigorous, fully discrete error analysis within the LOD framework, with particular emphasis on spatial discretization exploiting approximation properties of $V_{\LOD}$.

Following the temporal-spatial error splitting argument \cite{li2012error,li2013unconditional}, we first analyze a semi-discrete scheme in time. Starting from $\M^0 := \m_0 \in \H^1(\Omega)$, we sequentially find $\M^{n+1} \in \H^1(\Omega)$ for $n = 0, \dots, N-1$ by solving the elliptic system
\begin{equation}\label{time_discrete_form}
 	\begin{aligned}
 		D_{\tau}\M^{n+1}  - \alpha  \Delta \M^{n+1}  + \M^{n} \times \Delta  \M^{n+1} +  \nabla \M^{n} \times \nabla \M^{n+1}
 		=\alpha (\nabla  \M^{n} \cdot \nabla \M^{n} )\M^{n+1},
 	\end{aligned}
\end{equation}
with the Neumann condition $\nabla \M^{n+1} \cdot \n = \0$ and initial data $\M^0 = \m_0$.

The corresponding Galerkin formulation is obtained by multiplying \eqref{time_discrete_form} with a test function \(\v \in \H^1(\Omega)\) and integrating by parts. Specifically, find \(\M^{n+1}\in \H^1(\Omega)\) such that
\begin{equation}\label{time_discrete_elliptic}
\begin{aligned}
(D_{\tau}\M^{n+1},\v)+ \alpha(\nabla \M^{n+1},\nabla \v)
- ( \M^{n} \times \nabla \M^{n+1} , \nabla \v)
  = \alpha\big( (\nabla \M^{n}\cdot \nabla \M^{n})\M^{n+1}, \v\big),
  \end{aligned}
  \end{equation}
  for all \(\v \in \H^1(\Omega)\).

Combining the LOD spatial discretization with the time-stepping scheme in \eqref{Cimrak} yields the following fully discrete problem. Define the vector-valued LOD space as $\V_{\LOD} = [V_{\LOD}]^3$. Given \(\M_{\LOD}^0 \in \V_{\LOD}\), find \(\M_{\LOD}^{n+1}\in \V_{\LOD}\) for \(n=0,\ldots,N-1\) such that
\begin{equation}\label{nospeed}
\begin{aligned}
(D_{\tau}\M_{\LOD}^{n+1},\v) +
 \alpha(\nabla \M_{\LOD}^{n+1},\nabla \v)
- (\M_{\LOD}^{n} \times \nabla \M_{\LOD}^{n+1}, \nabla \v)
  = \alpha\big(|\nabla \M_{\LOD}^{n}|^2 \M_{\LOD}^{n+1}, \v\big),
  \end{aligned}
\end{equation}
for all \(\v \in \V_{\LOD}\).
We now state the main theorem of this work, which provides a fully discrete error estimate for the combined temporal and spatial discretizations.

\begin{theorem}\label{mainresult}
Let \( T > 0 \) be given, and suppose the Landau–Lifshitz equation \eqref{LLeq} admits a unique solution satisfying assumption \eqref{assumption}. For the elliptic system \eqref{time_discrete_form} with homogeneous Neumann boundary conditions, there exist constants \( \tau_1 > 0 \) and \( H_1 > 0 \) such that if \( \tau \le \tau_1 \) and \( H \le H_1 \), then:
\begin{enumerate}
    \item The system has a unique solution \( \mathbf{M}^{n+1} \).
    \item The LOD approximation \( A_{\LOD}(\mathbf{M}^{n+1}) \in \V_{\LOD} \) satisfies the LOD formulation \eqref{bilinear_B}.
\end{enumerate}
Moreover, the following error estimates hold:
\begin{align}
    \|\m(t_n) - \M_{\LOD}^n \|_{L^2(\Omega)} &\le C\,(\tau + H^3), \label{eq:L2-error} \\
    \|\m(t_n) - \M_{\LOD}^n \|_{H^1(\Omega)} &\le C\,(\tau + H^2), \label{eq:H1-error}
\end{align}
where the constant $C > 0$ is independent of the mesh size $H$ and the time step $\tau$, but may depend on $\Omega$, $\alpha$, and the regularity constant $C_\mathrm{reg}$ in Assumption \ref{assumption}. 
\end{theorem}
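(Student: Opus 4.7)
The plan is to combine the temporal-spatial error splitting of Li and Sun with the LOD approximation theory reviewed in Section \ref{sec: Localized Orthogonal Decomposition}. First I introduce the semi-discrete-in-time solution $\mathbf{M}^n$ defined by \eqref{time_discrete_elliptic} and the fully discrete LOD solution $\mathbf{M}_{\LOD}^n$ defined by \eqref{nospeed}, and I decompose
\begin{equation*}
\mathbf{m}(t_n) - \mathbf{M}_{\LOD}^n
= \underbrace{\bigl(\mathbf{m}(t_n) - \mathbf{M}^n\bigr)}_{\text{temporal error}}
+ \underbrace{\bigl(\mathbf{M}^n - A_{\LOD}(\mathbf{M}^n)\bigr)}_{\text{LOD Ritz projection error}}
+ \underbrace{\bigl(A_{\LOD}(\mathbf{M}^n) - \mathbf{M}_{\LOD}^n\bigr)}_{\boldsymbol{\xi}^n},
\end{equation*}
where $A_{\LOD}$ denotes the vector-valued LOD Ritz projection associated with $A(\mathbf{u},\mathbf{v}) := \alpha(\nabla\mathbf{u},\nabla\mathbf{v})$. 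This three-way split is the key structural device; it decouples the temporal consistency, the static LOD approximation, and the dynamical discrepancy introduced by projecting the nonlinearities.

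For the first block I establish existence and uniqueness of $\mathbf{M}^{n+1}$ via Lax-Milgram applied to the elliptic system \eqref{time_discrete_elliptic}: the antisymmetry identity $(\mathbf{M}^n \times \nabla \mathbf{M}^{n+1}, \nabla \mathbf{M}^{n+1}) = 0$ kills the cross-product term in the coercivity test, and a standard energy argument absorbs $\alpha(|\nabla\mathbf{M}^n|^2\mathbf{M}^{n+1},\mathbf{v})$ for small $\tau$. For the temporal error $\mathbf{e}^n := \mathbf{m}(t_n) - \mathbf{M}^n$, I test the consistency equation with $\mathbf{e}^{n+1}$, exploit the antisymmetry of the cross term, bound the truncation residual using $\partial_{tt}\mathbf{m}\in L^2(0,T;H^2)$ from Assumption \ref{assumption}, and close via a discrete Gronwall inequality. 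Following the higher-regularity strategy of \cite{henning2022superconvergence}, I carry this out simultaneously in the $L^2$ and $H^2$ norms so that $\|\mathbf{e}^n\|_{H^2} \le C\tau$; the $H^2$-bound is the extra ingredient (compared to classical temporal analyses) needed to transfer control from $\mathbf{m}(t_n)$ to $\mathbf{M}^n$ in the spatial step. In particular, an inverse-free bound of the form $\|\mathbf{M}^n\|_{W^{1,\infty}} \le C$ follows by combining $\|\mathbf{e}^n\|_{H^2}\le C\tau$ with Sobolev embedding applied to $\mathbf{m}$ and the Gagliardo-Nirenberg inequality in Appendix \ref{lem:GN_inequality}.

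The second block is essentially static: Theorem \ref{theorem_convergence_rate} (and Theorem \ref{theorem_total} for the localized version) applied componentwise with $f = D_\tau\mathbf{M}^{n+1} + \text{nonlinear r.h.s.}$ immediately yields
\begin{equation*}
\|\mathbf{M}^n - A_{\LOD}(\mathbf{M}^n)\|_{L^2} \le CH^3\|\mathbf{M}^n\|_{H^1 \text{-data}}, \qquad
\|\mathbf{M}^n - A_{\LOD}(\mathbf{M}^n)\|_{H^1} \le CH^2\|\mathbf{M}^n\|_{H^1\text{-data}},
\end{equation*}
uniformly in $n$, using the regularity of $\mathbf{M}^n$ inherited from the temporal estimate. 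The heart of the proof is the third block: I derive an error equation for $\boldsymbol{\xi}^n$ by subtracting \eqref{nospeed} from the LOD projection of \eqref{time_discrete_elliptic}, test with $\boldsymbol{\xi}^{n+1}$, and control each nonlinear difference
\begin{equation*}
\mathbf{M}^n\!\times\!\nabla\mathbf{M}^{n+1} - \mathbf{M}_{\LOD}^n\!\times\!\nabla\mathbf{M}_{\LOD}^{n+1}, \qquad |\nabla\mathbf{M}^n|^2\mathbf{M}^{n+1} - |\nabla\mathbf{M}_{\LOD}^n|^2\mathbf{M}_{\LOD}^{n+1}
\end{equation*}
by splitting and using Hölder plus the $W^{1,\infty}$ bound on $\mathbf{M}^n$ established above. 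A discrete Gronwall argument combined with the LOD projection estimate from the second block closes the bootstrap.

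\textbf{The main obstacle} I anticipate is propagating a uniform $W^{1,\infty}$ (or at least $L^\infty$-of-gradient) bound on the fully discrete iterate $\mathbf{M}_{\LOD}^n$ so that the nonlinear cross-product and $|\nabla\mathbf{M}_{\LOD}^n|^2\mathbf{M}_{\LOD}^{n+1}$ terms can be absorbed in the error equation for $\boldsymbol{\xi}^n$. Because $V_{\LOD}$ is not a standard polynomial space, I cannot invoke an inverse inequality directly on $\mathbf{M}_{\LOD}^n$ without losing the $O(H^3)$ rate. The resolution, exactly as motivated in the introduction, is to bound $\mathbf{M}_{\LOD}^n$ in the strong norm via $\mathbf{M}^n$ plus the induction hypothesis on $\|\boldsymbol{\xi}^n\|$, use the $H^2$-temporal estimate to guarantee that $\|\mathbf{M}^n\|_{W^{1,\infty}}$ is uniformly controlled, and thereby carry out an inductive argument in which the smallness thresholds $\tau \le \tau_1$ and $H \le H_1$ are dictated by the constants appearing in the discrete Gronwall step. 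Once this induction closes, \eqref{eq:L2-error} and \eqref{eq:H1-error} follow directly from the triangle inequality applied to the three-way splitting.
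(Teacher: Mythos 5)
Your three-way splitting of $\m(t_n) - \M_{\LOD}^n$ and the overall plan (temporal estimate via discrete Gronwall, static LOD projection error, and an induction on the discrete error $\e_{\LOD}^n := A_{\LOD}(\M^n) - \M_{\LOD}^n$ using a $W^{1,\infty}$-control on $\M^n$) all coincide with the paper's structure. The critical gap is your choice of bilinear form for the LOD Ritz projection. You define $A_{\LOD}$ with respect to $A(\u,\v) := \alpha(\nabla\u,\nabla\v)$, whereas the paper uses the time-dependent, non-symmetric form $B^{n+1}(\u,\v) = \alpha(\nabla\u,\nabla\v) - (\M^{n}\times\nabla\u,\nabla\v) + \alpha(\u,\v)$ from \eqref{bilinear_B}. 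This is not cosmetic: with your plain $A$-projection, after subtracting \eqref{nospeed} from the $\V_{\LOD}$-Galerkin form of \eqref{time_discrete_elliptic} and testing with $\e_{\LOD}^{n+1}$, the cross-product difference contains the surviving piece $\bigl(\M_{\LOD}^n\times\nabla(\M^{n+1}-A_{\LOD}(\M^{n+1})),\nabla\e_{\LOD}^{n+1}\bigr)$. Since the $H^1$-norm of the LOD projection error is only $O(H^2)$, this term is of size $H^2\|\nabla\e_{\LOD}^{n+1}\|_{L^2}$; after Young's inequality and Gronwall you obtain only $\|\e_{\LOD}^n\|_{L^2}\lesssim H^2$, a full order short of \eqref{eq:L2-error}. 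Integration by parts or a supercloseness trick does not rescue this, because any inverse inequality on $\nabla\e_{\LOD}^{n+1}$ costs the same $H^{-1}$. The paper's $B^{n+1}$-projection replaces $\M^{n+1}$ by $A_{\LOD}(\M^{n+1})$ in the Laplacian and in the cross-product \emph{simultaneously} (see \eqref{438}), so the only surviving cross-product piece after the antisymmetry cancellation is $\bigl((\M^n-\M_{\LOD}^n)\times\nabla A_{\LOD}(\M^{n+1}),\nabla\e_{\LOD}^{n+1}\bigr)$, and $\|\M^n-\M_{\LOD}^n\|_{L^2}\lesssim H^3$ by the projection and induction estimates — this is exactly where the extra order is gained.

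Two secondary points you would also need to pin down: (i) the pointwise bound $\|\e^n\|_{H^2}\le C\tau$ does not give $\|\nabla\M^n\|_{L^\infty}\le C$ in 3D ($H^2\not\hookrightarrow W^{1,\infty}$); the paper uses either the Calder\'on--Zygmund bound $\|\M^n\|_{W^{2,4}}\le C$ of Lemma \ref{lemma52} or the $H^3$-type temporal bound $\|\nabla\Delta\e^n\|_{L^2}\le C\tau$ of Lemma \ref{lemma_tau2}; and (ii) to control the right-hand-side term $\bigl(D_\tau(\M^{n+1}-A_{\LOD}(\M^{n+1})),\e_{\LOD}^{n+1}\bigr)$ in the error equation for $\e_{\LOD}^n$ at the $O(H^3)$ level, the paper needs $\|\Delta D_\tau\M^{n+1}\|_{H^1}\le C$ (Lemma \ref{lemma_tau3}), which in turn requires both Lemmas \ref{lemma_tau1} and \ref{lemma_tau2}. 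These are the concrete higher-regularity temporal estimates that your proposal gestures at ("carry this out simultaneously in higher norms") but does not develop.
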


The remainder of this section is organized as follows. In Section~\ref{subsec:Temporal Error Estimate} we present the temporal error estimate, and in Section~\ref{subsec:Spatial Estimate} we establish the spatial error estimate. Finally, in Section~\ref{subsec:Proof of Main Theorem} we combine these results to complete the proof of Theorem~\ref{mainresult}.

\subsection{Temporal Error Estimate}
\label{subsec:Temporal Error Estimate}
Next, we analyze the temporal error $\e^n := \M^n - \m^n$, where $\m^n := \m(t_n)$ denotes the exact magnetization at time $t_n$. Subtracting the weak form of the continuous equation \eqref{LLform1} from the discrete scheme \eqref{time_discrete_form} yields the following error equation,
\begin{equation}\label{errorform}
	\begin{aligned}
		& D_{\tau}\e^{n+1} - \alpha  \Delta \e^{n+1}  + \M^{n} \times \Delta  \e^{n+1} +   \nabla \M^{n} \times \nabla \e^{n+1} + \e^{n} \times \Delta  \m^{n+1} + \nabla \e^{n} \times \nabla \m^{n+1}\\
		=&\alpha ( |\nabla  \M^{n} |^2 \M^{n+1} -|\nabla  \m^{n} |^2 \m^{n+1} ) -\R_{\text{tr}}^{n+1},
	\end{aligned}
\end{equation}
with $\tau \sum \limits _{k=0} ^{n} ||\R_{\text{tr}}^{n}||_{L^2}^2 \le \tau^2$, $\tau \sum \limits _{k=0} ^{n} ||\Delta \R_{\text{tr}}^{n}||_{L^2}^2 \le \tau^2$, where the truncation error $\R_{\text{tr}}$ is given by
\begin{equation}\label{R_tr}
	\begin{aligned}
		\R_{\text{tr}}^{n+1}
		=D_{\tau} \m^{n+1} - \m_t^{n+1} - (\m^{n+1} -\m^{n}) \times \nabla \m^{n+1} + \alpha (|\nabla \m^{n+1}|^2\m^{n+1} - |\nabla \m^{n}|^2\m^{n+1}).
	\end{aligned}
\end{equation}
Hence, the weak formulation of the temporal error equation can be expressed as follows: 
\begin{equation*}\label{weak_time}
	\begin{aligned}
		& (D_{\tau} \e^{n+1},\v ) + \alpha  (\nabla \e^{n+1}, \nabla \v)  - ( \e^{n} \times \nabla  \m^{n+1}, \nabla \v) -  ( \M^{n} \times \nabla \e^{n+1},\nabla \v ) \\
		=&\alpha ( |\nabla  \M^{n} |^2 \M^{n+1} -|\nabla  \m^{n} |^2 \m^{n+1},\v ) -(\R_{\text{tr}}^{n+1}, \v ),
	\end{aligned}
\end{equation*}
for all $\v \in \H^1( \Omega )$, where $\e^0 = \0$ in $\Omega$ and $\nabla \e^n \cdot \n = \0$ on $\partial \Omega .$

We present the theorem establishing the temporal error bound.
\begin{theorem}\label{temporal_1}
Let $T>0$ represent a specified constant, and assume that the LL equation \eqref{LLeq} admits a unique solution that fulfills assumption \eqref{assumption}. Then, the elliptic system \eqref{time_discrete_form} with homogeneous Neumann boundary condition admits a unique solution $\M^{j+1}$ such that $\tau \le \tau_1$ for some $ \tau_1 >0$ ,
    \begin{equation}\label{time_discre}
    \sup_{0 \le k \le n}( ||\e^k||_{H^1}^2 + \tau \sum \limits _{i=0} ^{k} ||\e^i||_{H^2}^2 ) \le C_0^2 \tau^2 ,
    \end{equation}
where $C_0$ is a constant independent of the time step $\tau$, but may depend on $\Omega$, $\alpha$, and the regularity constant $C_\mathrm{reg}$ in Assumption \ref{assumption}.
\end{theorem}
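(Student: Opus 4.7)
The plan is a discrete parabolic energy argument by induction on $n$: assuming \eqref{time_discre} holds for all indices $k\le n$, I establish it at $n+1$. The induction hypothesis combined with $\M^k=\m^k+\e^k$ and Assumption~\ref{assumption} (together with the Gagliardo--Nirenberg inequality in the appendix) supplies the uniform $W^{1,\infty}\cap H^2$ bounds on $\M^k$ that the cubic and quadratic nonlinearities demand. Existence and uniqueness of $\M^{n+1}$ follow from Lax--Milgram, since \eqref{time_discrete_form} is linear in $\M^{n+1}$: coercivity comes from the $\tau^{-1}(\cdot,\cdot)$ mass term, the $\alpha(\nabla\cdot,\nabla\cdot)$ diffusion, the skew identity $(\M^n\times\nabla\v,\nabla\v)=0$, and the non-negativity of $(|\nabla\M^n|^2\v,\v)$; any remaining cross term is absorbed by taking $\tau\le\tau_1$ small.

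\textbf{Energy estimates.} To recover the two norms in \eqref{time_discre}, I would test the weak error equation with $\v=\e^{n+1}$ and then with $\v=-\Delta\e^{n+1}$ (using the Neumann condition to justify integration by parts). In the $\e^{n+1}$-test the discrete time derivative produces $\tfrac{1}{2\tau}(\|\e^{n+1}\|^2-\|\e^n\|^2)$, the term $(\M^n\times\nabla\e^{n+1},\nabla\e^{n+1})$ vanishes by skew-symmetry, and the remaining cross terms $(\e^n\times\nabla\m^{n+1},\nabla\e^{n+1})$, $(\nabla\e^n\times\nabla\m^{n+1},\e^{n+1})$ are controlled by H\"older using $\|\nabla\m^{n+1}\|_{L^\infty}$. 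The nonlinear difference splits as $(|\nabla\M^n|^2-|\nabla\m^n|^2)\M^{n+1}+|\nabla\m^n|^2\e^{n+1}$, whose first factor equals $(\nabla\M^n+\nabla\m^n)\cdot\nabla\e^n$ and yields linear control in $\|\nabla\e^n\|$; combined with $\tau\sum\|\R_{\text{tr}}^k\|^2\le\tau^2$ and discrete Gronwall this produces the $L^\infty(L^2)\cap L^2(H^1)$ bound. The $-\Delta\e^{n+1}$-test is structurally analogous but requires one more spatial derivative on $\m$ and invokes the stronger truncation estimate $\tau\sum\|\Delta\R_{\text{tr}}^k\|^2\le\tau^2$ noted in the remark following Assumption~\ref{assumption}; elliptic regularity for the Neumann Laplacian on the $C^{2,1}$ domain then converts $\|\Delta\e^{n+1}\|_{L^2}$ into $\|\e^{n+1}\|_{H^2}$, and a second discrete Gronwall closes \eqref{time_discre}.

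\textbf{Main obstacle.} The delicate point is the nonlinear coupling $\alpha(|\nabla\M^n|^2\M^{n+1}-|\nabla\m^n|^2\m^{n+1},-\Delta\e^{n+1})$ in the $H^2$ test: a naive bound calls for $\|\nabla\M^n\|_{L^\infty}$, which can only be estimated in terms of $\|\e^n\|_{H^2}$ --- exactly the quantity we are trying to control, and one that is only summable, not uniformly bounded, in $n$. The way around this is a Gagliardo--Nirenberg split $\|\nabla\M^n\|_{L^\infty}\le\|\nabla\m^n\|_{L^\infty}+C\|\e^n\|_{H^2}^{\theta}\|\e^n\|_{H^1}^{1-\theta}$, combined with the $\|\e^n\|_{H^1}=O(\tau)$ estimate already obtained in the first round of energy bounds, so that the offending quadratic piece can be absorbed into $\tau\alpha\|\Delta\e^{n+1}\|^2$ on the left for $\tau_1$ small enough. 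This bootstrap, together with the standard discrete Gronwall inequality, finally closes the induction with a constant $C_0$ depending only on $\Omega$, $\alpha$, and $C_{\mathrm{reg}}$.
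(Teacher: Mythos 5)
Your overall architecture coincides with the paper's: existence/uniqueness from Lax--Milgram, a discrete energy argument by induction, testing the error equation with $\e^{n+1}$ and $\Delta\e^{n+1}$, using the skew-symmetry $(\M^n\times\nabla\v,\nabla\v)=0$, decomposing the cubic nonlinear difference algebraically, using the truncation-error bounds (including the $\Delta\R_{\text{tr}}$ bound noted after Assumption~\ref{assumption}), and closing with the discrete Gronwall inequality. You also correctly identify the delicate point --- closing the nonlinear coupling in the $\Delta\e^{n+1}$ test --- as the place where something nontrivial is needed.

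However, your proposed resolution of that obstacle is flawed. You want to write
\[
\|\nabla\M^n\|_{L^\infty}\le\|\nabla\m^n\|_{L^\infty}+C\,\|\e^n\|_{H^2}^{\theta}\,\|\e^n\|_{H^1}^{1-\theta},
\]
but no admissible Gagliardo--Nirenberg exponent makes this true: controlling $\|\nabla u\|_{L^\infty}$ requires $\nabla u\in W^{1,p}$ with $p>d$, and since the induction hypothesis only furnishes $H^1$-- and $H^2$--control of $\e^n$ (hence $\nabla\e^n\in H^1\hookrightarrow L^6$, not $L^\infty$), the embedding $H^2\hookrightarrow W^{1,\infty}$ fails in both $d=2$ and $d=3$. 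Concretely, solving the GN balance $0=\tfrac{1}{d}+\theta(\tfrac{1}{2}-\tfrac{2}{d})+\tfrac{1-\theta}{2}$ for $d=3$ gives $\theta=5/4>1$, and for $d=2$ the borderline case $\theta=1$ is excluded. So the quantity $\|\nabla\e^n\|_{L^\infty}$ is simply unavailable at this stage of the argument (it only becomes available after the later Lemmas~\ref{lemma_tau1}--\ref{lemma_tau2}, which themselves rely on Theorem~\ref{temporal_1}).

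The way the paper avoids this is to use a finer algebraic identity,
\[
|\nabla\M^n|^2\M^{n+1}-|\nabla\m^n|^2\m^{n+1}
=(\nabla\e^n+2\nabla\m^n)\nabla\e^n\,\e^{n+1}+|\nabla\m^n|^2\e^{n+1}+(\nabla\e^n+2\nabla\m^n)\nabla\e^n\,\m^{n+1},
\]
so that every $L^\infty$ factor lands on the exact solution (where $\m\in W^{2,4}$ yields $\|\nabla\m\|_{L^\infty}\le C$), while $\nabla\e^n$ only ever needs to be placed in $L^2$, $L^3$, $L^4$, or $L^6$ --- all of which \emph{are} reachable from $H^1$ and $H^2$ by Sobolev embedding and GN interpolation. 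The induction hypothesis then gives the crucial smallness $\|\nabla\e^n\|_{L^3}\lesssim\|\nabla\e^n\|_{L^2}^{1/2}\|\nabla\e^n\|_{H^1}^{1/2}\lesssim\tau^{3/4}$ and $\|\e^n\|_{H^2}\lesssim\tau^{1/2}$, which allows the quadratic remainders to be absorbed into the $\alpha\|\e^{n+1}\|_{H^2}^2$ and $\eps\|\e^n\|_{H^2}^2$ terms before the Gronwall step. Replace your $L^\infty$-based absorption with this $L^3/L^4/L^6$ bookkeeping and the proof closes.
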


The detailed proof is provided in Section \ref{subsec:Proof of Theorem ref{temporal_1}.}, where we derive a temporal error estimate for \( \| e^{n+1} \|_{H^1} \), which directly characterizes the scheme's accuracy. The remaining two estimates, \( \| \Delta e^{n+1} \|_{L^2} \) and \( \| \nabla \Delta e^{n+1} \|_{L^2} \), are established later in Section \ref{subsec_lemma:Auxiliary_Lemmas}. These higher-order estimates, commonly employed in LOD analysis for nonlinear problems \cite{henning2022superconvergence}, are crucial as they guarantee the \( H^1 \) regularity of the forcing term in the associated elliptic system, which in turn enables the \( H^1 \) error analysis.

\subsection{Spatial Estimate}
\label{subsec:Spatial Estimate}

In this section, we analyze the spatial error \( \| \mathbf{M}^n - \mathbf{M}_{\text{LOD}}^n \| \). Since this error cannot be estimated directly, we decompose it into two components: the projection error \( \| \mathbf{M}^n - A_{\text{LOD}}(\mathbf{M}^n) \| \) and the approximation error within the LOD space \( \| A_{\text{LOD}}(\mathbf{M}^n) - \mathbf{M}_{\text{LOD}}^n \| \).

For \( n = 0, 1, \dots, N-1 \), the bilinear form \( B^n(\mathbf{u}, \mathbf{v}) \) is defined as follows:

\[
B^0(\mathbf{u}, \mathbf{v}) = \alpha (\nabla \mathbf{u}, \nabla \mathbf{v}) -  (\mathbf{M}^0 \times \nabla \mathbf{u}, \nabla \mathbf{v}) + \alpha (\mathbf{u}, \mathbf{v}),
\]

and for \( n > 0 \),

\begin{equation}
    B^{n+1}(\mathbf{u}, \mathbf{v}) = \alpha (\nabla \mathbf{u}, \nabla \mathbf{v}) -  (\mathbf{M}^n \times \nabla \mathbf{u}, \nabla \mathbf{v}) + \alpha (\mathbf{u}, \mathbf{v}).
\label{bilinear_B}
\end{equation}
where the term \( (\mathbf{u}, \mathbf{v}) \) is added to ensure the coercivity of the bilinear form. 

In Theorem \ref{theorem_Projectionestimate}, we derive an error estimate for the LOD projection error \( \| \mathbf{M}^n - A_{\text{LOD}}(\mathbf{M}^n) \| \) based on the bilinear form in \eqref{bilinear_B}. The proof uses techniques analogous to those in the linear elliptic analysis of Theorem~\ref{theorem_convergence_rate} and is provided in detail in Appendix \ref{subsec:Proof of Theorem ref{theorem_LODestimate}.}.

\begin{theorem}\label{theorem_Projectionestimate}
Assume \eqref{assumption} holds, and regarding $\M^{n+1}$ as the solution to the elliptic system \eqref{time_discrete_form}, the LOD approximation $A_{\LOD}(\M^{n+1}) \in \V_{\LOD}$ adheres to the LOD approximation \eqref{bilinear_B}. Then, there holds
\begin{equation}\label{LODestimate}
    \| \M^n - A_{\LOD} (\M^n)||_{L^2} + H || \M^n - A_{\LOD} (\M^n)||_{H^1} \le C H^3 ||\f^{n+1}||_{H^1}. 
\end{equation}
\end{theorem}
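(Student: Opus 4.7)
The plan is to adapt the proof of Theorem \ref{theorem_convergence_rate} to the non-symmetric bilinear form $B^{n+1}$, treating the cross-product contribution as an antisymmetric perturbation of the symmetric part $A_0(\u, \v) := \alpha (\nabla \u, \nabla \v) + \alpha (\u, \v)$ that underpins the construction of $\V_{\LOD}$. The first step is to verify that $B^{n+1}$ is continuous and coercive on $\H^1(\Omega)$. Continuity follows from Cauchy--Schwarz and a uniform $L^\infty$-bound on $\M^n$, which is inherited by induction from the bound on the exact solution $\m$ together with the temporal estimate of Theorem \ref{temporal_1}. The crucial observation for coercivity is the pointwise identity $\sum_i (\M^n \times \partial_i \v) \cdot \partial_i \v = 0$, a consequence of $(\a \times \b) \cdot \b = 0$, which yields $B^{n+1}(\v, \v) = \alpha \|\v\|_{\H^1}^2$. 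Lax--Milgram therefore provides a uniquely defined LOD Galerkin projection $A_{\LOD}(\M^{n+1}) \in \V_{\LOD}$ together with the orthogonality relation
\[
B^{n+1}(\M^{n+1} - A_{\LOD}(\M^{n+1}), \v) = 0 \qquad \forall \v \in \V_{\LOD}.
\]

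For the $H^1$-estimate, I would invoke C\'ea's lemma (valid for non-symmetric coercive forms) to obtain
\[
\|\M^{n+1} - A_{\LOD}(\M^{n+1})\|_{\H^1} \le C \inf_{\v \in \V_{\LOD}} \|\M^{n+1} - \v\|_{\H^1},
\]
and then choose $\v$ to be the componentwise $A_0$-Ritz projection of $\M^{n+1}$ onto $\V_{\LOD}$, which by Theorem \ref{theorem_convergence_rate} (case $s = 1$) is bounded by $CH^2 \|\f^{n+1}\|_{H^1}$. This delivers the contribution $H \|\M^{n+1} - A_{\LOD}(\M^{n+1})\|_{\H^1} \le CH^3 \|\f^{n+1}\|_{H^1}$ to the claimed estimate.

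The sharper $L^2$-bound then follows from an Aubin--Nitsche-type duality argument adapted to LOD. Setting $\bxi := \M^{n+1} - A_{\LOD}(\M^{n+1})$, I let $\mathbf{z} \in \H^1(\Omega)$ solve the adjoint problem $B^{n+1}(\v, \mathbf{z}) = (\bxi, \v)$ for all $\v \in \H^1(\Omega)$. Galerkin orthogonality then gives $\|\bxi\|_{L^2}^2 = B^{n+1}(\bxi, \mathbf{z} - \mathbf{z}_{\LOD})$, where $\mathbf{z}_{\LOD}$ is an appropriate LOD approximation of $\mathbf{z}$. Combining continuity of $B^{n+1}$, the $H^1$-bound on $\bxi$ just established, and the estimate $\|\mathbf{z} - \mathbf{z}_{\LOD}\|_{\H^1} \le CH \|\bxi\|_{L^2}$ (from Theorem \ref{theorem_convergence_rate} with $s = 0$, since $\bxi \in L^2$ serves as adjoint forcing), and cancelling one factor of $\|\bxi\|_{L^2}$, one arrives at $\|\bxi\|_{L^2} \le CH^3 \|\f^{n+1}\|_{H^1}$.

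The main obstacle will be the handling of the non-symmetric cross-product term, since $\V_{\LOD}$ is intrinsically tied to a symmetric inner product. Propagating a uniform $L^\infty$-bound on $\M^n$ independent of $\tau$ and $H$ is essential both for the continuity constant of $B^{n+1}$ and for ensuring that the perturbation does not degrade the approximation properties of $\V_{\LOD}$; this bound itself relies on the temporal analysis of Section \ref{subsec:Temporal Error Estimate}. A secondary technical point is justifying $\f^{n+1} \in \H^1(\Omega)$ in order to invoke the $s = 1$ case of Theorem \ref{theorem_convergence_rate}; this follows from Assumption \ref{assumption} together with the higher-order temporal estimates on $\|\Delta \e^{n+1}\|_{L^2}$ and $\|\nabla \Delta \e^{n+1}\|_{L^2}$ announced after Theorem \ref{temporal_1}.
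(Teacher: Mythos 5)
Your proposal follows the same overall reduction as the paper: extend the LOD convergence theory of Theorem \ref{theorem_convergence_rate} to the nonsymmetric bilinear form $B^{n+1}$, and verify that the elliptic right-hand side $\f^{n+1}$ of the reformulated system \eqref{another_elliptic_H3} lies in $\H^1$. However, the two proofs invest their effort in opposite places. The paper's written proof treats the extension of Theorem \ref{theorem_convergence_rate} to $B^{n+1}$ as a one-line remark (``techniques analogous to the linear elliptic analysis'') and spends essentially all of its content on the regularity check $\|\f^{n+1}\|_{H^1}\le C$: it bounds $\|D_\tau\M^{n+1}\|_{H^1}$ and $\|\M^{n+1}\|_{H^1}$ via \eqref{MnH2}--\eqref{Dtau}, and then does a Hölder/Sobolev/Calder\'on--Zygmund computation to control $\||\nabla\M^n|^2\M^{n+1}\|_{H^1}$, invoking $\|\M^n\|_{W^{2,4}}\le C$ from Lemma \ref{lemma52}. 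You relegate this entirely to ``a secondary technical point,'' whereas it is the actual substance of the paper's proof. Conversely, you make explicit what the paper leaves implicit: the coercivity of $B^{n+1}$ via the antisymmetry identity $(\M^n\times\nabla\v)\cdot\nabla\v=0$, well-posedness by Lax--Milgram, C\'ea's lemma for the nonsymmetric form, and the Aubin--Nitsche duality step for the $L^2$ bound. So your proposal and the paper are genuinely complementary: read together they give a complete argument, but each on its own omits a nontrivial piece of the other.

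One step of your C\'ea argument is imprecise as written. You take $\v$ to be the $A_0$-Ritz projection of $\M^{n+1}$ and quote Theorem \ref{theorem_convergence_rate} directly with data $\f^{n+1}$. But Theorem \ref{theorem_convergence_rate} bounds $\|u-u_{\LOD}\|$ in terms of the right-hand side of the elliptic problem \emph{for the symmetric form $A$ that defines $\V_{\LOD}$}. Here $\M^{n+1}$ solves $B^{n+1}(\M^{n+1},\cdot)=(\f^{n+1},\cdot)$, not $A_0(\M^{n+1},\cdot)=(\f^{n+1},\cdot)$; the $A_0$-data is $\f^{n+1}-\M^n\times\Delta\M^{n+1}-\nabla\M^n\times\nabla\M^{n+1}$ (up to the $(\alpha-1)\M^{n+1}$ discrepancy in the mass term), and placing \emph{this} in $H^1$ requires $\|\nabla\Delta\M^{n+1}\|_{L^2}\le C$, i.e.\ estimate \eqref{third_Mn} and the machinery of Lemma \ref{lemma_tau2}. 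The conclusion is salvageable because those bounds do hold under Assumption \ref{assumption}, but as stated the justification is not quite correct and conflates the $B^{n+1}$-data with the $A_0$-data.
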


The following theorem demonstrates that the approximation error within the LOD space, \( \| A_{\text{LOD}}(\mathbf{M}^n) - \mathbf{M}_{\text{LOD}}^n \| \), converges with third-order accuracy in the \( L^2 \)-norm and second-order accuracy in the \( H^1 \)-norm with respect to the coarse mesh size \( H \). This contrasts sharply with the standard FEM, which achieves only second-order accuracy in \( L^2 \) and first-order accuracy in \( H^1 \) with respect to the fine mesh size \( h \). A detailed proof is provided in Appendix \ref{subsec:Proof of Theorem ref{theorem_LODestimate}.}.

\begin{theorem}\label{theorem_LODestimate}
Let $T>0$ represent a specified constant, and assume that the LL equation \eqref{LLeq} admits a unique solution that fulfills the assumption \eqref{assumption}. Then, the elliptic system \eqref{time_discrete_form} with homogeneous Neumann boundary condition admits a unique solution $\M^{j+1}$ such that $H \le H_1$ for some $ H_1 >0$. Moreover, the LOD projection $A_{\LOD}(\M^{n+1}) \in \V_{\LOD}$ is consistent with the LOD approximation \eqref{bilinear_B}. 
Denote $\e_{\LOD}^n  :=A_{\LOD} (\M^n) -\M_{\LOD}^n$, then, there holds 
\begin{equation}\label{LODerror}
   || \e_{\LOD}^n||_{L^2} + H ||  \e_{\LOD}^n ||_{H^1} \le C H^3 ,
\end{equation}
where $C$ is a constant independent of the mesh size $H$, but may depend on $\Omega$, $\alpha$, and the regularity constant $C_\mathrm{reg}$ in Assumption \ref{assumption}. Here, the approximation solution $\M_{\LOD}^{n+1} \in \V_{\LOD}$ satisfies 
\begin{equation}\label{436}
	\begin{aligned}
		(D_{\tau} \M_{\LOD}^{n+1},\v) + \alpha(  \nabla \M_{\LOD}^{n+1},\nabla \v)  - (\M_{\LOD}^{n} \times \nabla  \M_{\LOD}^{n+1}, \nabla \v) 
		=\alpha ( (\nabla \M_{\LOD}^{n} \cdot \nabla \M_{\LOD}^{n} )\M_{\LOD}^{n+1}, \v) ,
	\end{aligned}
\end{equation}
for $\v \in \V_{\LOD}.$
\end{theorem}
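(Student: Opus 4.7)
The plan is to derive an error equation for $\e_{\LOD}^{n+1} := A_{\LOD}(\M^{n+1}) - \M_{\LOD}^{n+1}$ and close it by an inductive energy argument. Testing the semi-discrete equation \eqref{time_discrete_elliptic} against $\v\in\V_{\LOD}$ and invoking the defining identity $B^{n+1}(A_{\LOD}(\M^{n+1}),\v)=B^{n+1}(\M^{n+1},\v)$ of the LOD projection, I can rewrite the exchange and mass contributions in terms of $A_{\LOD}(\M^{n+1})$; subtracting \eqref{436} then yields an error equation of the schematic form
\[
(D_\tau \e_{\LOD}^{n+1},\v)+B^{n+1}(\e_{\LOD}^{n+1},\v)=-(D_\tau\eta^{n+1},\v)+\mathcal{N}_n(\v),
\]
where $\eta^{n+1}:=\M^{n+1}-A_{\LOD}(\M^{n+1})$ is the projection error controlled by Theorem~\ref{theorem_Projectionestimate}, and $\mathcal{N}_n(\v)$ collects the nonlinear discrepancies---the difference of the $|\nabla\M|^2\M$ terms and the cross-product term $((\M^n-\M_{\LOD}^n)\times\nabla\M_{\LOD}^{n+1},\nabla\v)$---which I would linearize via $\M^n-\M_{\LOD}^n=\eta^n+\e_{\LOD}^n$.

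The energy estimate proceeds by testing with $\v=\e_{\LOD}^{n+1}$. Two structural features enable it: the pointwise antisymmetry $(\M^n\times\nabla\e_{\LOD}^{n+1},\nabla\e_{\LOD}^{n+1})=0$ annihilates the skew-symmetric part of $B^{n+1}$, and the built-in $\alpha(\cdot,\cdot)$ mass term makes $B^{n+1}$ fully $H^1$-coercive with coercivity constant $\alpha$. Combining the projection bounds $\|\eta^k\|_{L^2}\le CH^3$ and $\|\eta^k\|_{H^1}\le CH^2$ from Theorem~\ref{theorem_Projectionestimate} with Young's inequality to absorb $\alpha\|\e_{\LOD}^{n+1}\|_{H^1}^2$ on the left-hand side, and applying a discrete Gronwall inequality summed from $\e_{\LOD}^0=0$, I would close the induction. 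The nonlinear terms require a uniform bound on $\|\nabla\M_{\LOD}^n\|_{L^\infty}$, which I would obtain by combining $\|\nabla\M^n\|_{L^\infty}\le C$ (inherited from Assumption~\ref{assumption} and the temporal bound \eqref{time_discre}) with an inverse inequality on $\V_{\LOD}$ applied to the inductive hypothesis on $\|\e_{\LOD}^n\|_{H^1}$.

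The principal obstacle is the term $(D_\tau\eta^{n+1},\e_{\LOD}^{n+1})$. A direct application of Theorem~\ref{theorem_Projectionestimate} to $\eta^{n+1}$ and $\eta^n$ separately only gives $\|D_\tau\eta^{n+1}\|_{L^2}\lesssim H^3/\tau$, which is catastrophic as $\tau\to 0$. The remedy is to recognise $D_\tau\eta^{n+1}$, up to a correction reflecting the $n$-dependence of $B^{n+1}$ through $\M^n$, as the LOD projection error of the discrete-time-differentiated elliptic problem whose right-hand side is $D_\tau\f^{n+1}$, with $\f^{n+1}$ the effective forcing of $\M^{n+1}$ in \eqref{time_discrete_form}. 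A second application of Theorem~\ref{theorem_Projectionestimate} then reduces the task to establishing a uniform $H^1$-bound on $D_\tau\f^{n+1}$. This is precisely why the higher-regularity temporal estimates $\|\Delta\e^n\|_{L^2}$ and $\|\nabla\Delta\e^n\|_{L^2}$ advertised after Theorem~\ref{temporal_1} and developed in Section~\ref{subsec_lemma:Auxiliary_Lemmas} are indispensable: they transfer the smoothness of $\m$ to $D_\tau\M^{n+1}$ and its gradient uniformly in $\tau$, yielding $\|D_\tau\eta^{n+1}\|_{L^2}\le CH^3$ independent of $\tau$. With this bound in hand, the summed estimate produces $\|\e_{\LOD}^n\|_{L^2}\le CH^3$ directly, while the companion bound $\|\e_{\LOD}^n\|_{H^1}\le CH^2$ follows from the $H^1$-coercivity of $B^{n+1}$ combined with the one-order-weaker projection bound $\|\eta^n\|_{H^1}\le CH^2$.
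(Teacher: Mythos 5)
Your proposal captures the paper's argument in its essentials: the same error equation by subtraction of \eqref{436} from the equation satisfied by $A_{\LOD}(\M^{n+1})$, the test $\v=\e_{\LOD}^{n+1}$ exploiting the pointwise antisymmetry of the cross-product term, the correct identification of $(D_\tau\eta^{n+1},\e_{\LOD}^{n+1})$ as the dangerous contribution, and---crucially---its resolution by commuting $D_\tau$ past $A_{\LOD}$ (treating $D_\tau\M^{n+1}$ as a solution of an elliptic problem with the frozen form $B^{n+1}$) so that Theorem~\ref{theorem_Projectionestimate} applies with the $H^1$-norm of the effective forcing controlled by Lemmas~\ref{lemma52} and~\ref{lemma_tau3}. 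Your parenthetical remark about the correction coming from the $n$-dependence of $B^{n+1}$ through $\M^n$ is exactly the subtlety the paper's write-up glosses over, and you are right that without the higher-regularity temporal estimates of Section~\ref{subsec_lemma:Auxiliary_Lemmas} the bound would degrade to $H^3/\tau$.

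One detail does not close as you state it: the final $H^1$ bound. The energy estimate with test $\e_{\LOD}^{n+1}$, followed by Gronwall, gives the pointwise $L^2$ bound $\sup_n\|\e_{\LOD}^n\|_{L^2}\le CH^3$ together with only the time-averaged bound $\tau\sum_k\|\nabla\e_{\LOD}^{k+1}\|_{L^2}^2\le CH^6$; coercivity of $B^{n+1}$ combined with the projection bound $\|\eta^n\|_{H^1}\le CH^2$ does not promote this to a pointwise $H^1$ bound. The paper closes this gap by the inverse inequality of Lemma~\ref{LOD_inverse_estimate}, namely $\|\e_{\LOD}^n\|_{H^1}\le CH^{-1}\|\e_{\LOD}^n\|_{L^2}\le CH^2$, which is the same device used to convert $L^2$ into $H^1$ rates in the proof of Theorem~\ref{mainresult}. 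Replace the coercivity appeal with that inverse estimate and the argument is complete.
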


\subsection{Proof of Theorem \ref{mainresult}}
\label{subsec:Proof of Main Theorem}

We need the following inverse estimate for the LOD space, which was established in \cite[Lemma 10.8]{henning2022superconvergence}.
\begin{lemma}[$H^2$ stability in LOD space]\label{LOD_inverse_estimate}
Assuming that \eqref{assumption} holds, and the bilinear form is defined by \eqref{bilinear_B}.
Then, for any $\M^{n+1} \in \H^2(\Omega)$ and the LOD approximation $A_{\LOD}(\M^{n+1}) \in \V_{\LOD}$ with
\begin{equation}
    B^{n+1}(A_{\LOD}(\M^{n+1}),\v) = B^{n+1}(\M^{n+1},\v)
\end{equation}
for all $\v \in \V_{\LOD}$, and fullfills
\begin{equation}
     ||A_{\LOD}(\M^{n+1})||_{H^2} \le  C||\M^{n+1}||_{H^2} ,
\end{equation}
where $C$ only depends on $B^{n}(\cdot,\cdot)$, $\Omega$ and mesh regularity constants.
    Furthermore, for any $\v_{\LOD} \in \V_{\LOD}$ we have the inverse estimates
    \begin{equation}
        || \v_{\LOD} ||_{H^2} \le C H^{-1} || \v_{\LOD} ||_{H^1}, ~|| \v_{\LOD} ||_{H^1} \le C H^{-1} || \v_{\LOD} ||_{L^2}.
    \end{equation}
\end{lemma}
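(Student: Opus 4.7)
The plan is to verify that $B^{n+1}$ satisfies the structural hypotheses required to adapt the argument of \cite[Lemma 10.8]{henning2022superconvergence}, and then to handle the $H^2$-stability of the projection and the two inverse estimates in turn. A first preparatory step is to establish that $B^{n+1}$ is coercive and bounded on $\H^1(\Omega)$ uniformly in $n$. Coercivity follows at once from the vector identity $\u\cdot(\M^n\times\u)=0$, which annihilates the skew term on the diagonal and gives $B^{n+1}(\u,\u)=\alpha\|\u\|_{H^1(\Omega)}^2$. Boundedness uses an $L^\infty$-bound on $\M^n$ obtained by Sobolev embedding from Assumption~\ref{assumption}, so the implied constants depend only on $\alpha$ and $C_{\mathrm{reg}}$.

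For the $H^2$-stability $\|A_{\LOD}(\M^{n+1})\|_{H^2}\le C\|\M^{n+1}\|_{H^2}$, I would exploit the $B^{n+1}$-orthogonal decomposition $\H^1(\Omega)=\V_{\LOD}\oplus_{B^{n+1}}W$ with $W=\ker(P_H)$, writing $\M^{n+1}=A_{\LOD}(\M^{n+1})+w$ where $w\in W$ is the Ritz projection of $\M^{n+1}$ onto $W$. The approximation machinery behind Theorem~\ref{theorem_convergence_rate}, combined with $P_Hw=0$, yields $\|w\|_{H^1}\le CH\|\M^{n+1}\|_{H^2}$. A broken-$H^2$ inverse estimate on $W$ (available because the correctors spanning $W$ are defined on the underlying fine mesh) then gives $\|w\|_{H^2}\le CH^{-1}\|w\|_{H^1}\le C\|\M^{n+1}\|_{H^2}$, and the triangle inequality delivers the claim.

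The two inverse estimates on $\V_{\LOD}$ would then follow from the characterization $\V_{\LOD}=(\mathrm{Id}-Q)V_H$ recorded in \eqref{eq:LOD-space-def}. For $\v_{\LOD}\in\V_{\LOD}$, set $v_H:=P_H\v_{\LOD}\in V_H$; then $\v_{\LOD}=(\mathrm{Id}-Q)v_H$, and $L^2$-contractivity of $P_H$ gives $\|v_H\|_{L^2}\le\|\v_{\LOD}\|_{L^2}$. The $H^1$- and broken-$H^2$-stability of the correction operator $Q$ (established by testing its defining equation against the corrector itself on each patch) yield $\|\v_{\LOD}\|_{H^k}\le C\|v_H\|_{H^k}$ for $k=1,2$. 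Combining this with the standard FEM inverse estimates $\|v_H\|_{H^k}\le CH^{-1}\|v_H\|_{H^{k-1}}$ on $V_H$ closes both chains, once one also observes $\|v_H\|_{H^1}\le C\|\v_{\LOD}\|_{H^1}$ via the same stability of $Q$.

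The hard part will be the $H^2$-stability in the second step. Standard $P_1$ finite elements do not admit a globally $H^2$-stable Ritz projection, so the argument must genuinely use the multiscale character of $\V_{\LOD}$: its basis functions are only $H^1$-conforming across coarse element boundaries but, thanks to the correctors, enjoy elliptic regularity in a broken sense on the fine mesh. Moreover, the presence of the skew term $(\M^n\times\nabla\u,\nabla\v)$ precludes a symmetric duality argument, so the Aubin--Nitsche step must be performed against the adjoint of $B^{n+1}$, whose coefficient $\M^n$ inherits its regularity from Assumption~\ref{assumption}. Verifying $H^1$-stability of $Q$ under this non-symmetric form, uniformly in $n$, is the technical kernel to which everything else reduces.
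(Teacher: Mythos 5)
The paper does not prove this lemma at all: it is imported verbatim from \cite[Lemma 10.8]{henning2022superconvergence}, so there is no in-paper argument to compare against. Judged on its own, your plan has one step that fails outright: the inverse estimate $\|w\|_{H^2}\le CH^{-1}\|w\|_{H^1}$ for $w\in W$. If $W$ is realized on a fine mesh of size $h$, as your parenthetical invokes, the inverse constant is $h^{-1}$, not $H^{-1}$; since $h\ll H$ in the LOD regime, the resulting bound $\|w\|_{H^2}\lesssim h^{-1}H\|\M^{n+1}\|_{H^2}$ diverges rather than closing. In the ideal LOD variant, $W=\ker(P_H)$ is infinite-dimensional and admits no inverse estimate whatsoever. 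The $H^{-1}$-scaled inverse estimate is a structural feature of the \emph{coarse} space $\V_{\LOD}$ (roughly, because $\Delta\v_{\LOD}$ is $L^2$-orthogonal to $W$ and thus lives in a coarse space); it simply does not transfer to the fine-scale complement $W$. Your $H^2$-stability argument therefore has the dependencies backwards: the $\V_{\LOD}$-inverse estimates must come first, and one then bounds $\|A_{\LOD}(\M^{n+1})\|_{H^2}$ by comparing against an $H^2$-stable element of $\V_{\LOD}$, e.g.\ $\v^*:=(\mathrm{Id}-Q)P_H\M^{n+1}$, applying the $\V_{\LOD}$-inverse estimate to $A_{\LOD}(\M^{n+1})-\v^*$ and the $H^1$-error estimate of the Ritz projection to the difference.

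A secondary gap is the decomposition itself: writing $\M^{n+1}=A_{\LOD}(\M^{n+1})+w$ with $w\in W$ presupposes that $\V_{\LOD}$ is the $B^{n+1}$-orthogonal complement of $W$. But $B^{n+1}$ is non-symmetric and its lower-order coefficient $\M^n$ changes with $n$, whereas $V_{\LOD}$ is fixed once and for all in Section~2 via a symmetric form $A$. With a fixed $\V_{\LOD}$, the Ritz residual $\M^{n+1}-A_{\LOD}(\M^{n+1})$ is $B^{n+1}$-orthogonal to $\V_{\LOD}$, but it need not lie in $W$, so the decomposition you want is unavailable. Your outline for the two $\V_{\LOD}$-inverse estimates via $\v_{\LOD}=(\mathrm{Id}-Q)P_H\v_{\LOD}$ and $L^2$-contractivity of $P_H$ is the right skeleton for the $L^2$-to-$H^1$ step, and your coercivity observation for $B^{n+1}$ is correct (modulo writing $\nabla\u$ in place of $\u$ in the cross-product identity). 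But the $H^1$-to-$H^2$ step hinges on $H^2$-stability of the corrector $Q$, which is the genuine content of \cite[Lemma 10.8]{henning2022superconvergence}; it does not follow from merely testing the corrector equation against itself, as that energy argument yields only $H^1$-stability.
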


By combining the results of Theorems \ref{temporal_1}, \ref{theorem_Projectionestimate}, and \ref{theorem_LODestimate}, we prove Theorem \ref{mainresult}, which provides the total (fully discrete) error estimate and is the main result of this paper.

\begin{proof}
    By combining estimates \eqref{time_discre}, \eqref{LODestimate} and \eqref{LODerror} collectively, the total error can be decomposed into two parts:
    \begin{equation}
    \begin{aligned}
    \| \m(t_n) -\M_{\LOD}^n \|_{L^2} &\le \| \m(t_n) -\M^n \|_{L^2} + \| \M^n - \M_{\LOD}^n \|_{L^2} \\
    &\le C \tau + \| \M^n -A_{\LOD}(\M^n) \|_{L^2} +  \| A_{\LOD}(\M^n) - \M_{\LOD}^n \|_{L^2} \\
    &\le C(\tau +  H^3 + H^3 ).
    \end{aligned}
\end{equation}

Using the inverse estimate in Lemma \ref{LOD_inverse_estimate}, we have  
\[
\| A_{\mathrm{LOD}}(\M^n) - \M_{\mathrm{LOD}}^n \|_{H^1} \le C H^{-1} \| A_{\mathrm{LOD}}(\M^n) - \M_{\mathrm{LOD}}^n \|_{L^2}.
\]  
Combined with a similar decomposition, this yields the desired $H^1$ error estimate.
\begin{equation}
    \|\m(t_n) - \M_{\LOD}^n \|_{H^1} 
    \leq C(\tau + H^2 + H^2).
    \label{total_error_H1}
\end{equation}
This completes the proof of Theorem \ref{mainresult}.
\end{proof}

Moreover, the modulus of $\M_{\LOD}^n$ satisfies the following convergence order.
\begin{corollary} 
Under the condition of Theorem \ref{mainresult}, the LOD solution $\M_{\LOD}^n$ satisfies
    \begin{equation*}
        \| 1-|\M_{\LOD}^n|^2 \|_{L^2} \le C (\tau +  H^3).
    \end{equation*}
\begin{proof}
Recall that $\m^n:=\m(\cdot,t_n)$ with $| \m^n |=1$. This estimate follows directly from the identity $$|\m^n|^2-|\M_{\LOD}^n|^2 = (\m^n+\M_{\LOD}^n)(\m^n-\M_{\LOD}^n)$$ together with the uniform bound
    \begin{equation*}
    \| \M_{\LOD}^n \|_{L^{\infty}} \le \| \e_{\LOD}^n \|_{L^{\infty}} + C (\tau + H^2) + \| \m^n \|_{L^{\infty}}\le C.
\end{equation*}     
Finally, applying H\"older's inequality completes the proof.
\end{proof}

\end{corollary}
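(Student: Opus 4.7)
The plan is to reduce the squared-modulus error to the ordinary $L^2$ error already controlled by Theorem~\ref{mainresult}. Since $|\m^n|=1$ pointwise, one has the vector identity
\[
1-|\M_{\LOD}^n|^2 \;=\; |\m^n|^2-|\M_{\LOD}^n|^2 \;=\; (\m^n+\M_{\LOD}^n)\cdot(\m^n-\M_{\LOD}^n),
\]
and H\"older's inequality applied with the $L^\infty$--$L^2$ split yields
\[
\|\,1-|\M_{\LOD}^n|^2\,\|_{L^2} \;\le\; \|\m^n+\M_{\LOD}^n\|_{L^\infty}\,\|\m^n-\M_{\LOD}^n\|_{L^2}.
\]
The second factor is precisely the $L^2$ error of Theorem~\ref{mainresult}, so it is bounded by $C(\tau+H^3)$. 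Thus everything reduces to verifying that the first factor is uniformly bounded in $\tau$ and $H$.

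Since $\|\m^n\|_{L^\infty}=1$ by the sphere constraint, the real task is to establish $\|\M_{\LOD}^n\|_{L^\infty}\le C$ uniformly. I would insert the LOD projection as a pivot,
\[
\M_{\LOD}^n \;=\; \m^n \;-\; \bigl(\m^n-A_{\LOD}(\M^n)\bigr) \;+\; \bigl(A_{\LOD}(\M^n)-\M_{\LOD}^n\bigr) \;=\; \m^n - \bigl(\m^n-A_{\LOD}(\M^n)\bigr) + \e_{\LOD}^n,
\]
and control the two perturbations separately. For $\m^n-A_{\LOD}(\M^n)$, the $H^2$ stability of $A_{\LOD}$ in Lemma~\ref{LOD_inverse_estimate} together with Theorem~\ref{theorem_Projectionestimate} and Assumption~\ref{assumption} gives a uniform $H^2$ bound, so Sobolev embedding $H^2\hookrightarrow L^\infty$ (valid for $d=2,3$) produces an $L^\infty$ bound of order $O(\tau+H^2)$ (indeed $O(1)$ already suffices). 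For $\e_{\LOD}^n$, Theorem~\ref{theorem_LODestimate} yields $\|\e_{\LOD}^n\|_{L^2}\le CH^3$, and two successive applications of the LOD inverse estimates from Lemma~\ref{LOD_inverse_estimate} combined with $H^2\hookrightarrow L^\infty$ give
\[
\|\e_{\LOD}^n\|_{L^\infty}\;\le\; C\,\|\e_{\LOD}^n\|_{H^2}\;\le\; CH^{-2}\,\|\e_{\LOD}^n\|_{L^2}\;\le\; CH,
\]
which vanishes as $H\to 0$.

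Collecting the pieces gives $\|\M_{\LOD}^n\|_{L^\infty}\le C$ and hence $\|\m^n+\M_{\LOD}^n\|_{L^\infty}\le C$ uniformly in $\tau$ and $H$, and substitution into the H\"older bound finishes the proof. The main subtlety is the uniform $L^\infty$ control of $\M_{\LOD}^n$: the $L^2$ and $H^1$ rates from Theorem~\ref{mainresult} are not by themselves enough in three dimensions, and it is the interplay between the $L^2$ super-convergence of $\e_{\LOD}^n$ and the inverse estimates on the coarse LOD space in Lemma~\ref{LOD_inverse_estimate} that makes the argument close.
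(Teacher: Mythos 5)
Your proposal follows the paper's proof step for step: the same modulus identity $1-|\M_{\LOD}^n|^2=(\m^n+\M_{\LOD}^n)\cdot(\m^n-\M_{\LOD}^n)$, the same $L^\infty$--$L^2$ H\"older split, and the same reduction to a uniform $L^\infty$ bound on $\M_{\LOD}^n$ via the decomposition through $A_{\LOD}(\M^n)$; you even spell out the inverse-estimate chain $\|\e_{\LOD}^n\|_{L^\infty}\le C\|\e_{\LOD}^n\|_{H^2}\le CH^{-2}\|\e_{\LOD}^n\|_{L^2}\le CH$ that the paper leaves implicit. (Note a small sign slip in your pivot decomposition---the middle expression as written equals $2A_{\LOD}(\M^n)-\M_{\LOD}^n$, not $\M_{\LOD}^n$---but this is immaterial once you pass to the triangle inequality.)
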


\section{Numerical Results}
\label{sec: Numerical results}

In this section, we present several examples to demonstrate the performance of the proposed numerical method. We consider the problem in the unit square \(\Omega = [0,1]^2\) with final time \(T\). The LL system is governed by the initial condition \(\mathbf{m}_0\) and homogeneous Neumann boundary conditions, where \(\alpha\) denotes the damping parameter. Since the numerical scheme is first-order accurate in time and third-order accurate in the \(L^2\)-norm in space, the time step should be chosen sufficiently small.

\begin{example}[Constant Coefficient]
In this numerical experiment, the final simulation time is set to $T=0.5$, and the initial condition is specified as $\m_0=(0,0,1)^T$. The time step is defined as $\tau = T/N$, with time step size $\tau = 1e-5$ employed in this simulation. 

To validate the spatial accuracy in two dimensions, we construct an analytical solution of the form
\begin{equation*}
    \m_{e} = (\cos(x^2(1-x)^2y^2(1-y)^2)\sin(t),\sin(x^2(1-x)^2y^2(1-y)^2)\sin(t),\cos(t))^T.
\end{equation*}
The substitution of $\m_{e}$ into the governing equation yields the required forcing term
\begin{equation*}
	\f_e =\partial_t \m_{e} - \alpha \Delta \m_{e}+ \m_e \times \Delta \m_e  - \alpha |\nabla  \m_e |^2\m_e .
\end{equation*}

Tables \ref{tab:L2H1} and \ref{tab:L2H1 1e-2} record the $L^2$ and $H^1$ norm errors computed on a sequence of coarse meshes with $H = 1/2, 1/4, 1/8$, for damping parameters $\alpha = 1.0$ and $\alpha = 1\times10^{-2}$, respectively. For both values of $\alpha$, the numerical results achieve third-order accuracy in the $L^2$ norm and second-order accuracy in the $H^1$ norm, thus validating the convergence rates predicted in Theorem \ref{mainresult}.

\begin{table*}[htbp]
\centering
\caption{The $L^2$ and $H^1$ errors and convergence rate of multiscale basis for a series of coarse meshes satisfying  $H =1/2,1/4,1/8$ with damping parameter $\alpha = 1.0 .$}
\begin{tabular}{|c|c|c|}
\hline
$H$ & $||\m_{e}^N - \M_{\LOD}^N ||_{L^2}$  & $||\m_{e}^N - \M_{\LOD}^N ||_{H^1}$  \\ 
\hline
1/2 & 3.0057e-04  & 2.6551e-03 \\ 
1/4 & 2.9370e-05  & 5.7892e-04  \\ 
1/8 & 3.7779e-06  & 1.4748e-04 \\ 
\hline
Order & 3.1570  & 2.0851 \\
\hline
\end{tabular}
\label{tab:L2H1}
\end{table*}

\begin{table*}[htbp]
\centering
\caption{The $L^2$ and $H^1$ errors and convergence rate of multiscale basis for a series of coarse meshes satisfying  $H =1/2,1/4,1/8$ with damping parameter $\alpha = 1e-2$.}
\begin{tabular}{|c|c|c|}
\hline
$H$ & $||\m_{e}^N - \M_{\LOD}^N ||_{L^2}$ &  $||\m_{e}^N - \M_{\LOD}^N ||_{H^1}$  \\ 
\hline
1/2 & 3.0159e-04  & 2.6554e-03  \\ 
1/4 & 2.9479e-05  & 5.7894e-04  \\ 
1/8 & 4.1658e-06  & 1.4749e-04  \\
\hline
Order & 3.0889  & 2.0851 \\
\hline
\end{tabular}
\label{tab:L2H1 1e-2}
\end{table*}

\end{example}

\begin{example}[Quasi-Periodic Coefficient]
In this example, the material coefficient is defined as
\begin{equation*}
    \kappa(x,y) = (1+0.25\sin(2 \pi x/\varepsilon))\cdot(1+0.25\sin(2 \pi y/\varepsilon)+0.25\sin(2 \sqrt 2 \pi y/\varepsilon)),
\end{equation*}
with $\varepsilon=1/32.$ This coefficient, which is periodic in the $x$-direction but non-periodic in the $y$-direction, was previously investigated using the Heterogeneous Multiscale Method (HMM) in \cite{leitenmaier2022heterogeneous}.  We also use it to validate the effectiveness of our proposed multiscale basis functions.

The final time is set to $T=0.2$ with a damping parameter $\alpha=1e-2$. The initial data is defined as $\m_0=\tilde{\m}_{0}/|\tilde{\m}_{0}|$ where the vector $\tilde{\m}_{0}$ is given by
\begin{equation}\label{initial_data_quasi}
\tilde{\m}_{0} = 
\begin{pmatrix}
0.6 + \exp\bigl(-0.3 \cdot (\cos(2\pi (x - 0.25)) + \cos(2\pi (y - 0.12)))\bigr) \\
0.5 + \exp\bigl(-0.4 \cdot (\cos(2\pi x) + \cos(2\pi (y - 0.4)))\bigr) \\
0.4 + \exp\bigl(-0.2 \cdot (\cos(2\pi (x - 0.81)) + \cos(2\pi (y - 0.73)))\bigr)
\end{pmatrix} .
\end{equation}
Since an analytical solution is unavailable for this problem, we compute a reference solution, denoted $\m_{\text{ref}}^N$, on a fine mesh with $h = 1/2^8$ and $\tau=1e-4$ at the final time step. We use this reference solution as a substitute for the exact solution. Accordingly, the corresponding $L^2$ and $H^1$ errors are presented in Table \ref{tab:L2H1 quasi-perriodic2}. Our results indicate that for $H = 1/2, 1/4, 1/8, 1/16$, the proposed multiscale bases achieve approximately third-order convergence in the $L^2$ norm and second-order convergence in the $H^1$ norm. Figure \ref{fig:quasiperio_coefficient_pair} shows the approximation curves for the three components of $\mathbf{m}^N$ (i.e., $(m_1, m_2, m_3)^\mathrm{T}$), computed using the standard $P_1$ FEM and the LOD method ($H = 1/2^4$, $\tau = 1e-4$), demonstrating the superior performance of the latter.

\begin{table*}[htbp]
\centering
\caption{The $L^2$ and $H^1$ errors and convergence rate of multiscale basis for a series of coarse meshes satisfying  $H =1/2,1/4,1/8,1/16$ with damping parameter $\alpha = 1e-2$.}
\begin{tabular}{|c|c|c|}
\hline
$H$ & $||\m_{\text{ref}}^N - \M_{\LOD}^N ||_{L^2}$  & $||\m_{\text{ref}}^N - \M_{\LOD}^N ||_{H^1}$  \\ 
\hline
1/2 & 1.8732e-01  & 9.5928e-01    \\ 
1/4 & 3.2626e-02 & 2.4836e-01     \\ 
1/8 & 5.1201e-03 & 6.0489e-02     \\ 
1/16 & 3.2545e-04  & 6.8678e-03   \\
\hline
Order & 3.0178  & 2.3416 \\
\hline
\end{tabular}
\label{tab:L2H1 quasi-perriodic2}
\end{table*}

\begin{figure}[htbp]
    \centering
    \begin{minipage}{0.48\textwidth}
        \centering
        \includegraphics[height=5cm,width=6cm]{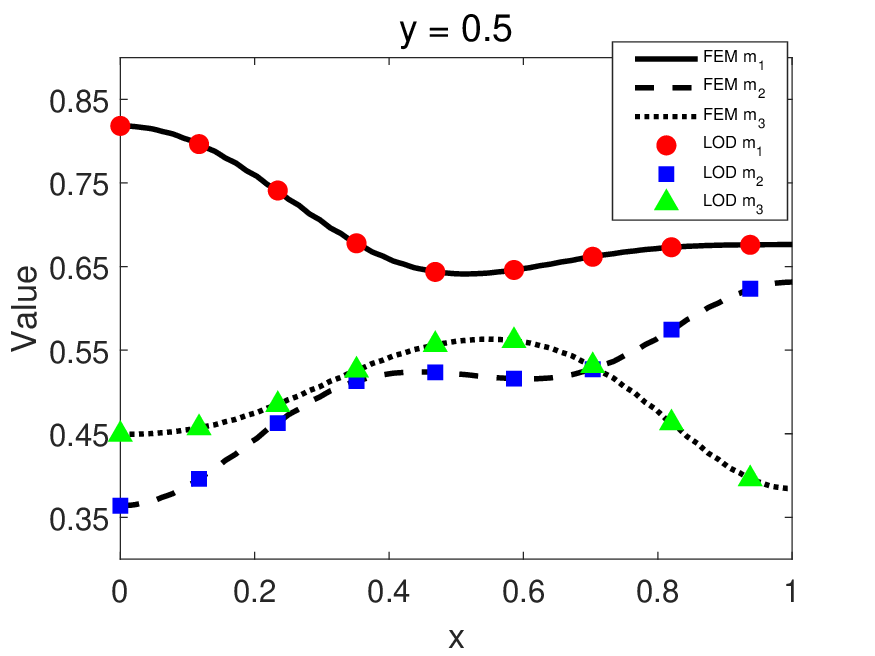}
        \label{fig:quasiperio_coefficient_left}
    \end{minipage}
    \hfill
    \begin{minipage}{0.48\textwidth}
        \centering
        \includegraphics[height=5cm,width=6cm]{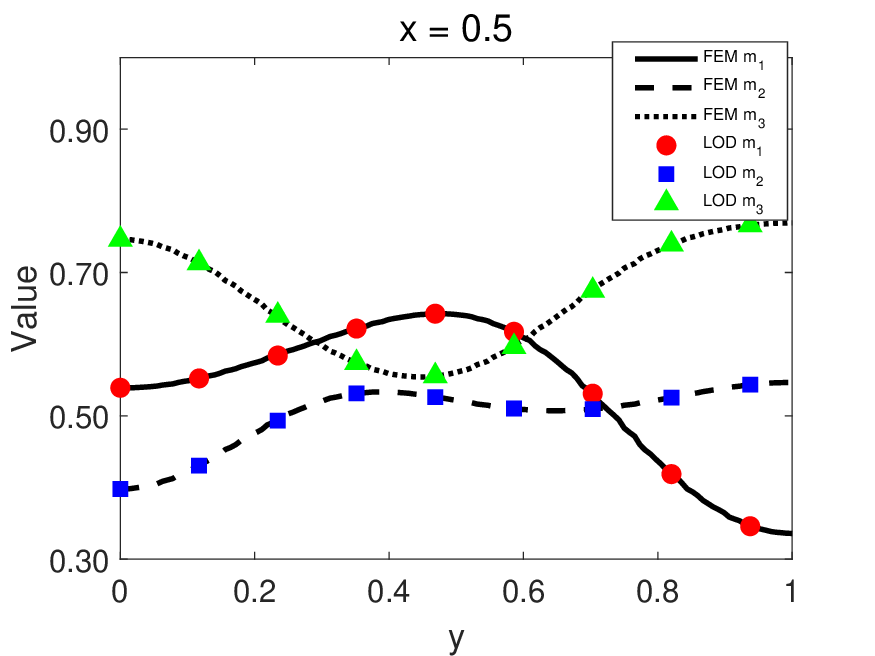}
        \label{fig:quasiperio_coefficient_right}
    \end{minipage}
    \caption{Approximation of $\mathbf{m}$: (Left) $y=0.5$ cross-section, (Right) $x=0.5$ cross-section.} 
    \label{fig:quasiperio_coefficient_pair}
\end{figure}
\end{example}

\begin{example}[Locally-Periodic Coefficient]
The parameters for this simulation (following \cite{leitenmaier2022heterogeneous}) are as follows:
\begin{itemize}
    \item Final time: $T = 5 e{-2}$
    \item Damping parameter: $\alpha = 0.1$
    \item Initial data: $\mathbf{m}_0$ as defined in \eqref{initial_data_quasi}
    \item Material coefficient: $\kappa(x,y)$ takes the form
    \begin{equation}\label{Locally-Periodic}
        \kappa(x,y) = 0.25 \exp\left( -\cos(2\pi (x+y)/\varepsilon) + \sin(2 \pi x/\varepsilon) \cos(2\pi y) \right),
    \end{equation}
    with $\varepsilon=1/64$.
\end{itemize}

Similarly, in the absence of an exact solution, a reference solution computed on a fine mesh ($h = 1/2^9$, $\tau = 5 \times 10^{-4}$) at the final time serves as a benchmark. As shown in Table~\ref{tab:L2H1 Locally-Periodic}, the resulting $L^2$ and $H^1$ errors demonstrate nearly third-order and second-order convergence rates, respectively. Consistent with these findings, Figure \ref{fig:localperio_coefficient_pair} shows the approximation curves for the three components of $\mathbf{m}^N$ (i.e., $(m_1, m_2, m_3)^\mathrm{T}$), computed using the standard $P_1$ FEM and the LOD method ($H = 1/2^4$, $\tau = 5e-4$), demonstrating the superior performance of the latter.

\begin{table*}[htbp]
\centering
\caption{The $L^2$ and $H^1$ errors and convergence rate of multiscale basis for a series of coarse meshes satisfying  $H =1/2,1/4,1/8,1/16$ with damping parameter $\alpha = 1e-2$.}
\begin{tabular}{|c|c|c|}
\hline
$H$ & $||\m_{\text{ref}}^N - \M_{\LOD}^N ||_{L^2}$  & $||\m_{\text{ref}}^N - \M_{\LOD}^N ||_{H^1}$ \\ 
\hline
1/2  & 7.4516e-01    & 5.4159e-01  \\ 
1/4  & 1.3127e-02    & 1.7299e-01  \\ 
1/8  & 2.0868e-03    & 4.8023e-02   \\ 
1/16 & 2.0088e-04    & 9.4035e-03   \\
\hline
Order & 2.8258  & 1.9392 \\
\hline
\end{tabular}
\label{tab:L2H1 Locally-Periodic}
\end{table*}

\begin{figure}[htbp]
    \centering
    \begin{minipage}{0.48\textwidth}
        \centering
        \includegraphics[height=5cm,width=6cm]{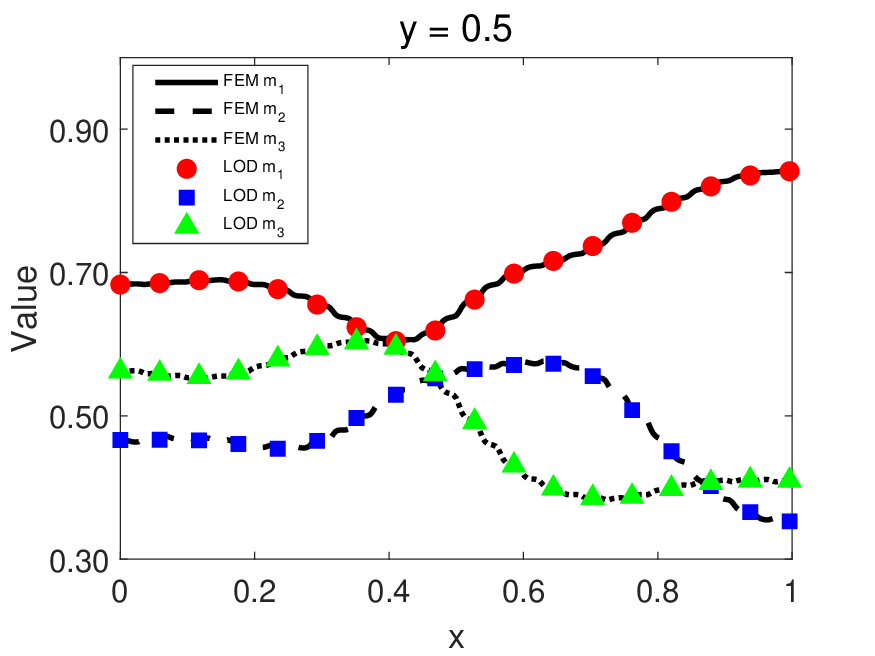}
        \label{fig:localperio_coefficient_left}
    \end{minipage}
    \hfill
    \begin{minipage}{0.48\textwidth}
        \centering
        \includegraphics[height=5cm,width=6cm]{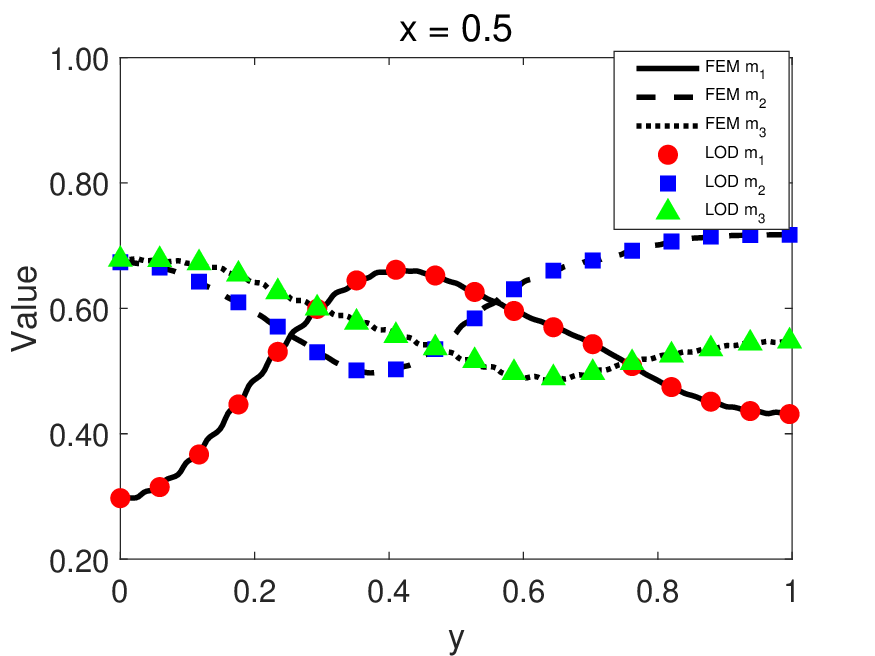}
        \label{fig:localperio_coefficient_right}
    \end{minipage}
    \caption{Approximation of $\mathbf{m}$: (Left) $y=0.5$ cross-section, (Right) $x=0.5$ cross-section.} 
    \label{fig:localperio_coefficient_pair}
\end{figure}
\end{example}

\begin{example}[Rough Coefficient]
In this example, all parameters are kept the same as the quasi-periodic case, except for the material coefficient $\kappa(x,y)$, which is now defined as
\begin{equation}\label{rough_coeff}
    \kappa(x,y)=\text{int}(5+2\sin(2 \pi x)\sin(2 \pi y)),
\end{equation}
where the function “int” rounds a real number $r$ down to the nearest integer less than or equal to $r$.
This definition results in a discontinuous function, as illustrated in Figure~\ref{fig:Rough coefficient}. A similar discontinuous coefficient has been investigated in \cite{henning2017crank}. 

Following the same benchmarking procedure, a reference solution is computed on a fine mesh ($h = 1/2^9$, $\tau = 1\times 10^{-4}$). The resulting $L^2$ and $H^1$ error norms are recorded in Table \ref{tab:L2H1 rough2}. The numerical results, which align with those of the quasi-periodic case, show that the proposed multiscale basis functions achieve approximately third-order convergence in the $L^2$ norm and second-order convergence in the $H^1$ norm for $H = 1/2, 1/4, 1/8, 1/16$. Finally, Figure \ref{Rough coefficient example} displays the profile of the first component $m_1$ and its approximations obtained via the FEM and LOD methods ($H = 1/2^4$, $\tau = 1\times 10^{-4}$) at the final time $T$, demonstrating the excellent approximation capability of the LOD method.
\begin{figure}[htbp]
\centering
\includegraphics[height=5cm, keepaspectratio]{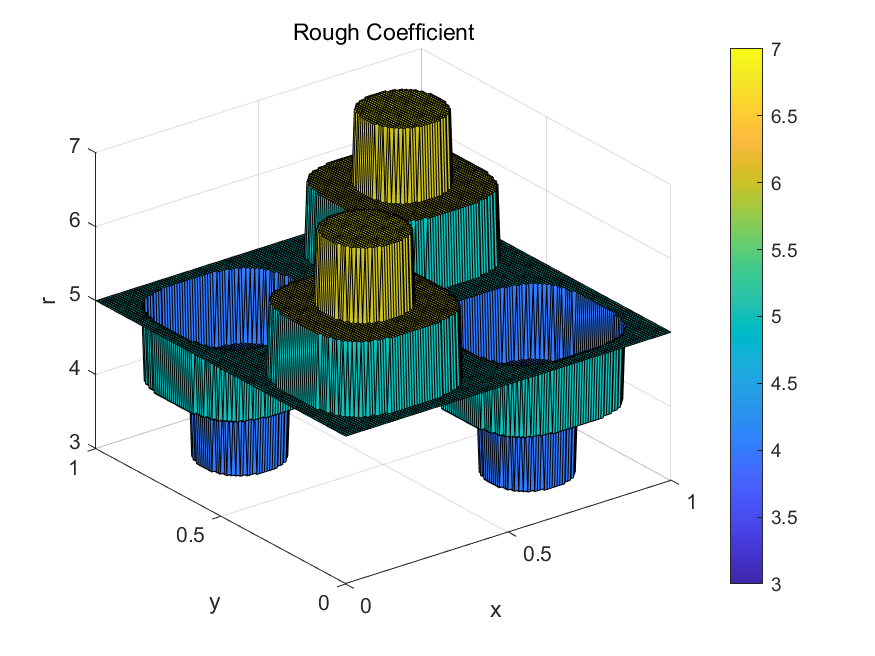}
\caption{Discontinuous rough coefficient $\kappa(x,y)$ on $\Omega=[0,1]^2$.}
\label{fig:Rough coefficient}
\end{figure}

\begin{table*}[htbp]
\centering
\caption{The $L^2$ and $H^1$ errors and convergence rate of multiscale basis for a series of coarse meshes satisfying  $H =1/2,1/4,1/8,1/16$ with damping parameter $\alpha = 1e-2$. }
\begin{tabular}{|c|c|c|}
\hline
$H$ & $||\m_{\text{ref}}^N - \M_{\LOD}^N ||_{L^2}$ & $||\m_{\text{ref}}^N - \M_{\LOD}^N ||_{H^1}$ \\ 
\hline
1/2 & 1.0303e-02 & 7.1185e-02 \\ 
1/4 & 1.5146e-03 & 2.5397e-02 \\ 
1/8 & 1.2056e-04 & 4.8299e-03 \\ 
1/16 & 1.2092e-05 & 1.1537e-03 \\
\hline
Order & 3.2856  & 2.0236 \\
\hline
\end{tabular}
\label{tab:L2H1 rough2}
\end{table*}

\begin{figure}[H]
    \centering
    
    \subfigure[Reference solution: FEM]{
        \includegraphics[width=5cm]{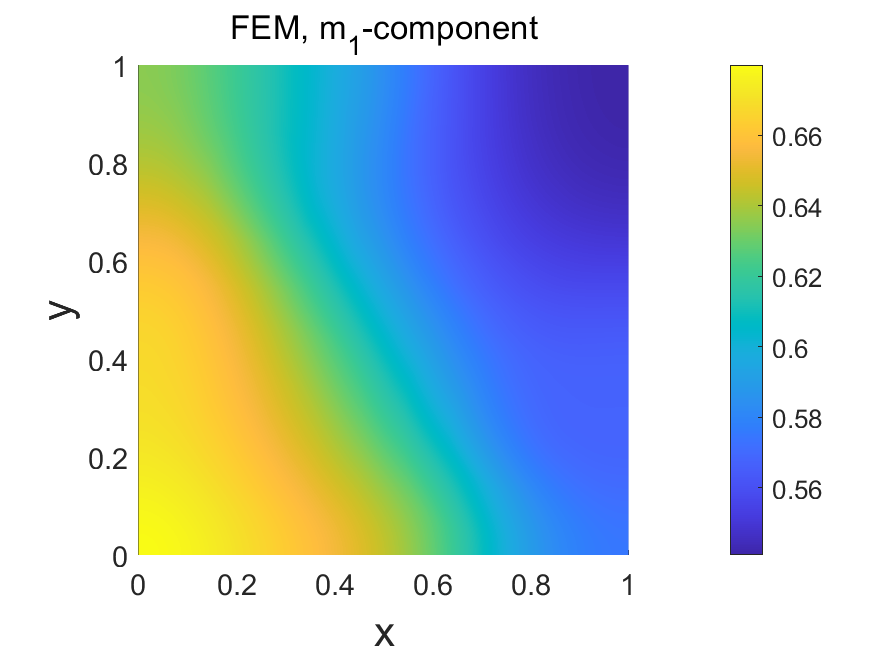}
        \label{FEM_m1_value}
    }
    \quad
    \subfigure[Numerical solution: LOD]{
        \includegraphics[width=5cm]{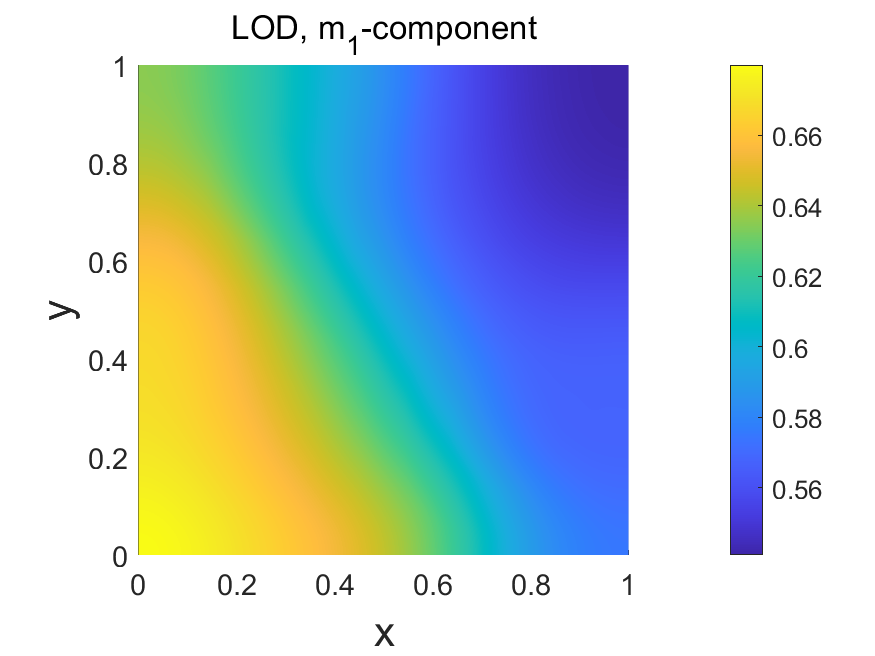}
        \label{LOD_m1_value}
    }
    
    \par\vspace{0.2cm}
    \centering
    {\small(a) Contours}
    \par\vspace{0.3cm}
    
    \subfigure[Approximation curve: $y=0.5$]{
        \includegraphics[width=5cm]{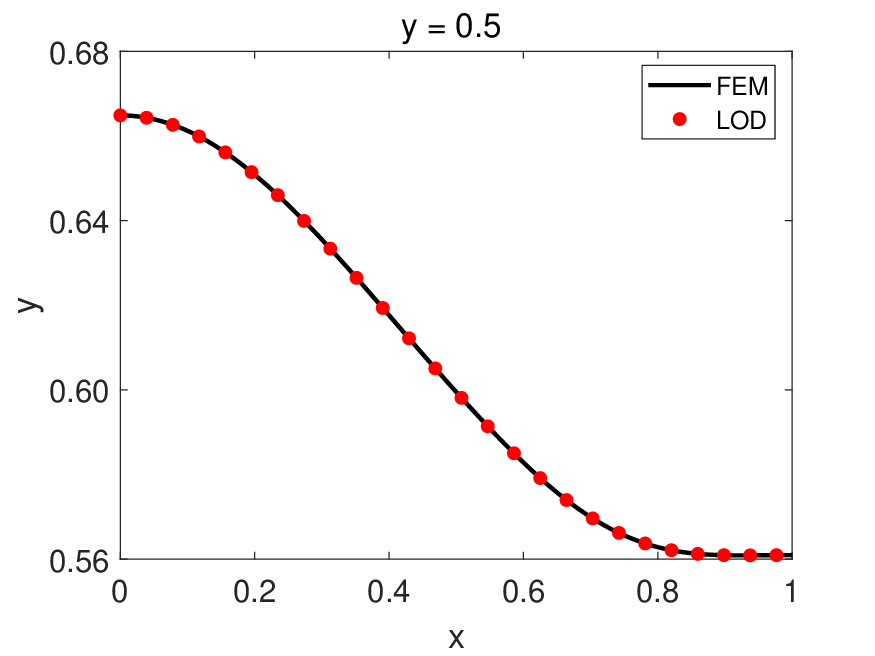}
        \label{Fig2-1expdecay}
    }
    \quad
    \subfigure[Approximation curve: $x=0.5$]{
        \includegraphics[width=5cm]{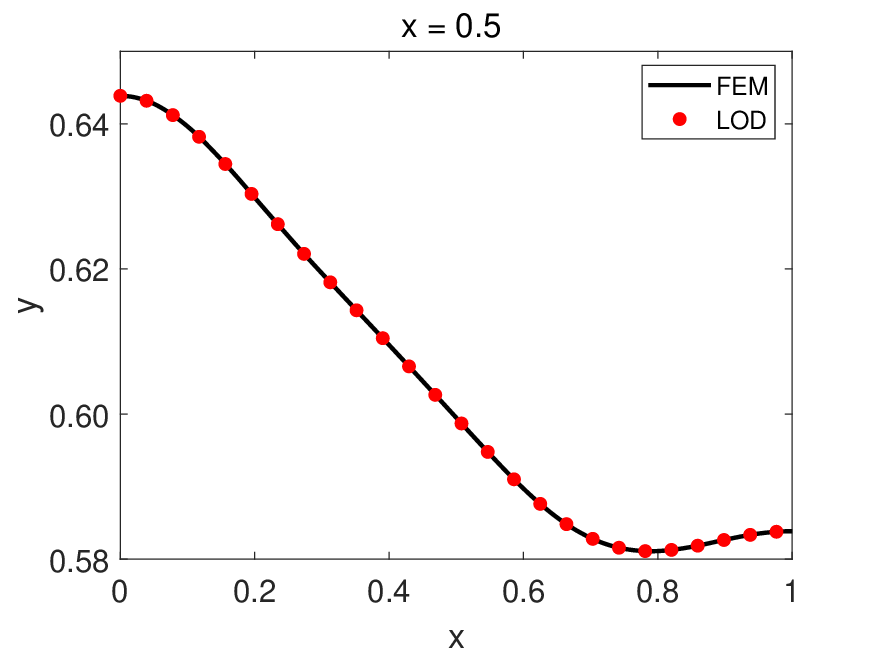}
        \label{Fig2-2expdecay}
    }
    
    \par\vspace{0.2cm}
    \centering
    {\small(b) Cross sections}
    
    \caption{Rough coefficient example, with $T=0.2$ and $\alpha=1e-2$}
    \label{Rough coefficient example}
\end{figure}

\end{example}

\section{Proof of Main Theorems}\label{sec11}

This section is dedicated to proving the main theorems of this work, namely, Theorems \ref{temporal_1}, \ref{theorem_Projectionestimate}, and \ref{theorem_LODestimate}.

\subsection{Proof of Theorem \ref{temporal_1}. }
\label{subsec:Proof of Theorem ref{temporal_1}.}

We use mathematical induction to prove the temporal discretization error estimate.

\begin{proof}
We begin by establishing the existence and uniqueness of the solution \(\mathbf{M}^{n+1}\) to the elliptic system \eqref{time_discrete_elliptic}. This follows directly from the Lax--Milgram theorem \cite{cimrak2005error,gao2014optimal}. Since \(\mathbf{e}^0 = \mathbf{M}^0 - \mathbf{m}^0 = \mathbf{m}_0 - \mathbf{m}_0 = \mathbf{0}\), the estimate \eqref{time_discre} holds trivially for \(n = 0\). We now proceed to prove \eqref{time_discre} for general \(n\) by mathematical induction.

Assume that \eqref{time_discre} holds for all \(k = 1, \dots, n\), i.e.,
\begin{equation}\label{induction_hypothesis}
    \sup_{0 \le k \le n} \Bigl( \|\mathbf{e}^k\|_{H^1}^2 + \tau \sum_{i=0}^{k} \|\mathbf{e}^i\|_{H^2}^2 \Bigr) \le C_0^2 \tau^2 .
\end{equation}
Our goal is to determine a constant \(C_0 > 0\) such that \eqref{time_discre} remains valid for \(k = n+1\).

Testing the error equation \eqref{errorform} with \(\e^{n+1}\) and \(\Delta \mathbf{e}^{n+1}\), respectively, yields the following two expressions 
\begin{equation*}
\begin{aligned}
&(D_{\tau} \e^{n+1},\e^{n+1}) + \alpha \| \nabla \e^{n+1} \|_{L^2}^2
- (\e^{n} \times \nabla \m^{n+1}, \nabla \e^{n+1}) - (\M^{n} \times \nabla \e^{n+1},\nabla \e^{n+1}) \\
&\quad= \alpha(|\nabla \M^{n}|^2 \M^{n+1} - |\nabla \m^{n}|^2 \m^{n+1},\e^{n+1}) - (\R_{\text{tr}}^{n+1}, \e^{n+1}),
\end{aligned}
\end{equation*}

\begin{equation*}
\begin{aligned}
&(D_{\tau} \e^{n+1},\Delta \e^{n+1}) - \alpha\|\Delta \e^{n+1}\|_{L^2}^2 
+ (\nabla \M^{n} \times \nabla \e^{n+1}, \Delta \e^{n+1}) 
+ (\e^{n} \times \Delta \m^{n+1},\nabla \e^{n+1}) \\
&\quad+ (\nabla \e^{n} \times \nabla \m^{n+1}, \Delta \e^{n+1}) = \alpha(|\nabla \M^{n}|^2 \M^{n+1} - |\nabla \m^{n}|^2 \m^{n+1}, \Delta \e^{n+1}) 
- (\R_{\text{tr}}^{n+1}, \Delta \e^{n+1}).
\end{aligned}
\end{equation*}
Here, the truncation error $\R_{\mathrm{tr}}^{n+1}$ is given by \eqref{R_tr}. Special care is required in estimating the following nonlinear terms:
\begin{align}
    &\alpha \bigl( |\nabla \mathbf{M}^{n}|^2 \mathbf{M}^{n+1} - |\nabla \mathbf{m}^{n}|^2 \mathbf{m}^{n+1}, \; \mathbf{e}^{n+1} \bigr), \label{term1} \\
    &\alpha \bigl( |\nabla \mathbf{M}^{n}|^2 \mathbf{M}^{n+1} - |\nabla \mathbf{m}^{n}|^2 \mathbf{m}^{n+1}, \; \Delta \mathbf{e}^{n+1} \bigr). \label{term2}
\end{align}
The remaining terms in the error equation can be handled using standard techniques, such as H\"{o}lder's inequality, as detailed in Theorem 3.1 of \cite{gao2014optimal}. We begin by reformulating the common nonlinear term appearing in the error equation using the following algebraic identity,
\begin{equation}\label{term_common}
    \begin{aligned}
        & |\nabla \M^{n}|^{2} \M^{n+1} - |\nabla \m^{n}|^{2} \m^{n+1} \\
        ={} & |\nabla \M^{n}|^{2} \M^{n+1} - |\nabla \M^{n}|^{2} \m^{n+1} 
              + |\nabla \M^{n}|^{2} \m^{n+1} - |\nabla \m^{n}|^{2} \m^{n+1} \\
        ={} & (\nabla \e^{n} + 2\nabla \m^{n}) \nabla \e^{n} \e^{n+1} 
              + |\nabla \m^{n}|^{2} \e^{n+1} + (\nabla \e^{n} + 2\nabla \m^{n}) \nabla \e^{n} \m^{n+1}.
    \end{aligned}
\end{equation}

Testing the error equation \eqref{errorform} with $\v = \e^{n+1}$ and applying H\"older's inequality, we obtain
\begin{equation}\label{termall}
    \begin{aligned}
        &\bigl(|\nabla \M^{n}|^{2} \M^{n+1} - |\nabla \m^{n}|^{2} \m^{n+1},\; \e^{n+1}\bigr) \\
        \leq{} &\Bigl(\|(\nabla \e^{n} + 2\nabla \m^{n}) \nabla \e^{n} \e^{n+1}\|_{L^{2}} 
                + \||\nabla \m^{n}|^{2} \e^{n+1}\|_{L^{2}}  + \|(\nabla \e^{n} + 2\nabla \m^{n}) \nabla \e^{n} \m^{n+1}\|_{L^{2}}\Bigr) 
                \|\e^{n+1}\|_{L^{2}} \\
        \leq{} &\Bigl(\|\nabla \e^{n}\|_{L^{6}}^{2} \|\e^{n+1}\|_{L^{6}} 
                + \|\nabla \e^{n}\|_{L^{3}} \|\e^{n+1}\|_{L^{6}} 
                + \|\e^{n+1}\|_{L^{2}}  + \|\nabla \e^{n}\|_{L^{3}} \|\nabla \e^{n}\|_{L^{6}} 
                + \|\nabla \e^{n}\|_{L^{2}}\Bigr) \|\e^{n+1}\|_{L^{2}}.
    \end{aligned}
\end{equation}

The terms $\|\e^n\|_{L^3}$ and $\|\nabla \e^n\|_{L^3}$ can be further bounded using the Gagliardo--Nirenberg inequality in Appendix \ref{lem:GN_inequality}. Together with the induction hypothesis \eqref{induction_hypothesis}, we obtain
\begin{align}
    \|\e^n\|_{L^3} &\leq C \|\e^n\|_{L^2}^{1/2} \|\e^n\|_{H^1}^{1/2} \leq C_0 \tau, \label{bound_L3} \\
    \|\nabla \e^n\|_{L^3} &\leq C \|\nabla \e^n\|_{L^2}^{1/2} \|\nabla \e^n\|_{H^1}^{1/2} \leq C_0 \tau^{3/4}. \label{bound_grad_L3}
\end{align}

Collecting the above estimates and applying Young's inequality, \eqref{termall} can be further bounded as
\begin{equation}\label{e_n+1}
    \begin{aligned}
        &\bigl(|\nabla \M^{n}|^{2} \M^{n+1} - |\nabla \m^{n}|^{2} \m^{n+1},\; \e^{n+1}\bigr) \\
        \leq  &\Bigl(\|\nabla \e^{n}\|_{L^{6}}^{2} \|\e^{n+1}\|_{L^{6}} 
              + C_{0} \tau^{\frac{3}{4}} \bigl(\|\e^{n+1}\|_{H^{1}} + \|\nabla \e^{n}\|_{L^{6}}\bigr)  + \|\e^{n+1}\|_{L^{2}} + \|\nabla \e^{n}\|_{L^{2}}\Bigr) \|\e^{n+1}\|_{L^{2}} \\
        \leq & \eps \|\e^{n+1}\|_{H^{1}}^{2} 
              + \eps \|\e^{n+1}\|_{L^{2}}^{2} 
              + \eps \|\e^{n}\|_{H^{2}}^{2}  + \eps \|\e^{n+1}\|_{L^{2}}^{2} 
              + \eps \|\e^{n}\|_{H^{1}}^{2} 
              + \eps^{-1} \|\e^{n+1}\|_{L^{2}}^{2},
    \end{aligned}
\end{equation}
where $\eps > 0$ is chosen such that $C_{0} \tau \leq C_{0} \tau^{3/4} \leq \eps$.

Testing with $\v = \Delta \e^{n+1}$ and following an argument analogous to the previous estimate, we obtain
\begin{equation}\label{termall2}
    \begin{aligned}
        &\bigl( |\nabla \M^{n}|^{2} \M^{n+1} - |\nabla \m^{n}|^{2} \m^{n+1},\; \Delta \e^{n+1} \bigr) \\
        \leq & \bigl\| (\nabla \e^{n} + 2\nabla \m^{n})\nabla \e^{n}(\e^{n+1}+\m^{n+1}) 
              + |\nabla \m^{n}|^{2} \e^{n+1} \bigr\|_{L^{2}}
              \|\Delta \e^{n+1}\|_{L^{2}} \\
        \leq & \eps \|\e^{n}\|_{H^{2}}^{2} 
              + \eps \|\e^{n+1}\|_{H^{2}}^{2} 
              + C \eps^{-1} \|\e^{n}\|_{H^{1}}^{2}.
    \end{aligned}
\end{equation}

Combining the estimates \eqref{e_n+1}, \eqref{termall2} with the bounds established in \cite{gao2014optimal} yields
\begin{equation}\label{combined_estimate}
D_{\tau}\bigl(\|\e^{n+1}\|_{H^1}^2\bigr) + \alpha \|\e^{n+1}\|_{H^2}^2 
        \leq \eps \|\e^{n}\|_{H^2}^2 + \eps \|\e^{n+1}\|_{H^2}^2  + \eps^{-1} \|\e^{n}\|_{H^1}^2 + \eps^{-1} \|\e^{n+1}\|_{H^1}^2 + \eps^{-1} \|\R_{\text{tr}}^{n+1}\|_{L^2}^2.
\end{equation}

Finally, employing the discrete Gronwall's inequality from Lemma \ref{Gronwall}, we deduce that
\begin{equation}\label{final_estimate}
    \|\e^{n+1}\|_{H^1}^2 + \tau \sum_{k=0}^{n} \|\e^k\|_{H^2}^2 
    \leq \exp\!\Bigl(\frac{T}{1-\tau}\Bigr) \tau^2 
    \overset{\tau \leq \tau_1}{\leq} \exp(2TC) \tau^2,
\end{equation}
where $\tau$ is chosen smaller than $\tau_1 = 0.5$ and $\eps$ is taken sufficiently small. Thus, the proof of \eqref{time_discre} is completed by taking $C_0 \geq \exp(TC)$.
\end{proof}

\subsection{Proof of Theorem \ref{theorem_Projectionestimate}. }
\label{subsec:Proof of Theorem ref{theorem_Projectionestimate}.}

This theorem bounds the LOD projection error $\|\M^{n} - A_{\LOD}(\M^{n})\|$ defined by the bilinear form in \eqref{bilinear_B}. The proof extends the convergence arguments established for the linear elliptic case in Theorem~\ref{theorem_convergence_rate}, employing the bilinear form given in \eqref{bilinear_B}.

\begin{proof}
Recall that $D_{\tau} \M^{n+1} = (\M^{n+1} - \M^{n})/\tau$, and Cimr\'{a}k's scheme can be rewritten as the following elliptic system
\begin{equation}\label{another_elliptic_H3}
    \begin{aligned}
        -\alpha \Delta \M^{n+1} + \M^{n} \times \Delta \M^{n+1} 
        &+ \nabla \M^{n} \times \nabla \M^{n+1} + \M^{n+1} \\
        &= -D_{\tau} \M^{n+1} + \M^{n+1} + \alpha |\nabla \M^{n}|^{2} \M^{n+1} \\
        &:= \mathbf{f}^{n+1}.
    \end{aligned}
\end{equation}
Next, with the estimate \eqref{LODestimate} established, we aim to prove that 
\begin{equation}
    \f^{n+1}  \in {H^1} .
\end{equation}
When analyzing the first and second terms of $\mathbf{f}^{n+1}$, they can be constrained by \eqref{MnH2}, \eqref{Dtau}, and possess the property
\begin{equation}
    \|D_{\tau} \M^{n+1}\|_{H^1} \le C .
\end{equation}

For the treatment of the third term, using H\"{o}lder's inequality, we obtain
\begin{equation}
    \begin{aligned}
    \big\||\nabla \mathbf{M}^{n}|^{2} \mathbf{M}^{n+1}\big\|_{H^{1}} 
    &= \big\||\nabla \mathbf{M}^{n}|^{2} \mathbf{M}^{n+1}\big\|_{L^{2}} 
    + \big\|\nabla\big(|\nabla \mathbf{M}^{n}|^{2} \mathbf{M}^{n+1}\big)\big\|_{L^{2}} \\
    &\leq \|\nabla \mathbf{M}^{n}\|_{L^{6}}^{2} \|\mathbf{M}^{n+1}\|_{L^{6}} 
    + \big\| |\nabla \mathbf{M}^{n}|^{2} \nabla \mathbf{M}^{n+1}\big\|_{L^{2}} \quad + \big\|2 \nabla \mathbf{M}^{n} \nabla^{2} \mathbf{M}^{n} \mathbf{M}^{n+1}\big\|_{L^{2}} \\
    &\leq \|\mathbf{M}^{n}\|_{H^{2}}^{2} \|\mathbf{M}^{n+1}\|_{H^{2}} 
    + \|\nabla \mathbf{M}^{n}\|_{L^{6}}^{2} \|\nabla \mathbf{M}^{n+1}\|_{L^{6}}  + \|\nabla \mathbf{M}^{n}\|_{L^{\infty}} \|\nabla^{2} \mathbf{M}^{n}\|_{L^{2}} \|\mathbf{M}^{n+1}\|_{L^{\infty}} \\
    &\leq \|\mathbf{M}^{n}\|_{H^{2}}^{2} \|\mathbf{M}^{n+1}\|_{H^{2}} 
    + \|\mathbf{M}^{n}\|_{H^{2}}^{2} \|\mathbf{M}^{n+1}\|_{H^{2}}  + \|\mathbf{M}^{n}\|_{W^{2,4}} \|\nabla^{2} \mathbf{M}^{n}\|_{L^{2}} \|\mathbf{M}^{n+1}\|_{H^{2}} \\
    &\leq C,
    \end{aligned}
\end{equation}
where we used the result that $|| \nabla^2 \M^n ||_{L^2} \le C || \Delta \M^n ||_{L^2}$, with $\nabla \M^n \cdot \n =\0$ and $\M^n \in H^2$. Furthermore, by Sobolev embeddings, we have
\[
\|\M^{n}\|_{L^{6}} \leq C \|\M^{n}\|_{H^{1}},
\qquad 
\|\M^{n+1}\|_{L^{\infty}} \leq C \|\M^{n+1}\|_{H^{2}}.
\]

Consequently,
\[
\|\M^{n} - A_{\LOD}(\M^{n})\|_{L^{2}} + H\|\M^{n} - A_{\LOD}(\M^{n})\|_{H^1} \leq C H^{3} \|\mathbf{f}^{n+1}\|_{H^{1}}.
\]
\end{proof}

\subsection{Auxiliary Lemmas for Theorem \ref{theorem_LODestimate}}
\label{subsec_lemma:Auxiliary_Lemmas}

To prove Theorem \ref{theorem_LODestimate}, we first establish several auxiliary lemmas. Lemma \ref{lemma52} provides bounds for $\| \M^n \|_{W^{2,4}}$, while Lemmas \ref{lemma_tau1} and \ref{lemma_tau2} estimate the temporal errors $\| \Delta e^{n+1} \|_{L^2}$ and $\| \nabla \Delta e^{n+1} \|_{L^2}$. These higher-order estimates are essential for our framework, as they ensure the \( H^1 \) regularity of the forcing term in the elliptic system \eqref{another_elliptic_H3}, a standard requirement in LOD analysis for nonlinear problems \cite{henning2022superconvergence}. Subsequently, Lemma \ref{lemma_tau3} is derived from these results and plays a central role in the proof of Theorem \ref{theorem_LODestimate}.

\subsubsection{Lemma \ref{lemma52} }
\label{subsec:Proof of Lemma ref{lemma52}}
Under Assumption \ref{assumption}, the exact solution $\m^n$ of the Landau--Lifshitz (LL) equation possesses sufficient regularity. However, the regularity of the temporally discretized solution $\M^n$ remains to be established. The following lemma provides essential regularity estimates for $\M^n$.

\begin{lemma}\label{lemma52}
Under the same assumptions as Theorem \ref{temporal_1}, the following uniform bound holds:
\begin{equation}
    \|\M^{n}\|_{W^{2,4}} \le C.
\end{equation}
\end{lemma}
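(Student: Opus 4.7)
The strategy is a bootstrap induction on $n$: I assume a uniform $W^{2,4}$ bound on $\M^0,\ldots,\M^n$ and propagate it to $\M^{n+1}$ by applying $L^p$ elliptic regularity to the reformulation \eqref{another_elliptic_H3}. The base case $\|\M^0\|_{W^{2,4}} \le C$ is immediate from $\M^0 = \m_0 \in H^4 \hookrightarrow W^{2,4}$ (valid in $d \le 3$ by Sobolev embedding, using the regularity in Assumption \ref{assumption}).

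For the inductive step, I rewrite the principal part of \eqref{another_elliptic_H3} as $A_n\,\Delta \M^{n+1}$, where $A_n := -\alpha I + [\M^n \times]$ is the $3\times 3$ matrix whose symmetric part equals $-\alpha I$; hence $A_n$ is uniformly invertible with $\|A_n^{-1}\|_{L^\infty} \le C$. Multiplying by $A_n^{-1}$ converts the system into a standard elliptic Neumann problem on the convex $C^{2,1}$ domain $\Omega$, so $L^p$ elliptic regularity with $p = 4$ yields
\[
\|\M^{n+1}\|_{W^{2,4}} \le C\bigl(\|\f^{n+1}\|_{L^4} + \|\nabla \M^n\|_{L^\infty}\|\nabla \M^{n+1}\|_{L^4} + \|\M^{n+1}\|_{L^4}\bigr).
\]
Under the induction hypothesis and the embedding $W^{2,4} \hookrightarrow W^{1,\infty}$ (valid for $d \le 3$), the factor $\|\nabla \M^n\|_{L^\infty}$ is uniformly controlled. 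The cross term $\|\nabla \M^{n+1}\|_{L^4}$ is absorbed into the left-hand side via the Gagliardo--Nirenberg inequality $\|\nabla \v\|_{L^4} \lesssim \|\v\|_{W^{2,4}}^{\theta}\|\v\|_{L^4}^{1-\theta}$ (Appendix) followed by Young's inequality, while $\|\M^{n+1}\|_{L^4}$ is controlled through the uniform $H^1$ bound on $\M^{n+1}$ coming from Theorem \ref{temporal_1} plus the regularity of $\m^{n+1}$.

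The main obstacle is the $L^4$-control of the forcing $\f^{n+1} = -D_\tau \M^{n+1} + \M^{n+1} + \alpha|\nabla \M^n|^2 \M^{n+1}$. The delicate piece is $D_\tau \M^{n+1}$, which I split as $D_\tau \m^{n+1} + D_\tau \e^{n+1}$. The first summand is bounded uniformly by $\|\partial_t \m\|_{L^\infty(0,T;L^4)}$ using Assumption \ref{assumption}. For the second summand, I test the temporal error equation \eqref{errorform} against $D_\tau \e^{n+1}$, exploit cancellation from the cross-product terms (using $\mathbf{a}\times\mathbf{b}\cdot\mathbf{b} = 0$), and absorb the nonlinear contributions via the bounds $\|\e^k\|_{H^1} \le C\tau$ from Theorem \ref{temporal_1}, together with $\|\nabla \M^k\|_{L^\infty}\le C$ (again from the induction hypothesis); the resulting $H^1$-type bound on $D_\tau \e^{n+1}$ then gives the needed $L^4$-bound through $H^1\hookrightarrow L^4$ in $d\le 3$. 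The remaining term $|\nabla \M^n|^2 \M^{n+1}$ is straightforward once $\|\nabla \M^n\|_{L^\infty}$ and $\|\M^{n+1}\|_{L^4}$ are under control. Choosing $\tau$ sufficiently small to close the absorption step completes the induction, yielding a constant independent of $n$ and $\tau$ (depending only on $\Omega$, $\alpha$, $C_\mathrm{reg}$, and $T$).
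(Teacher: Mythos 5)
Your overall strategy — reformulate the time-discrete scheme as an elliptic system, invert the uniformly non-degenerate matrix $-\alpha I + [\M^n\times\,]$, and apply $L^4$ Calder\'on--Zygmund regularity followed by Gagliardo--Nirenberg absorption — matches the paper's proof in spirit. But two details as written do not close, and the paper's version avoids both.

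First, your route to controlling $D_\tau\M^{n+1}$ is both unnecessary and incorrect. You propose testing the error equation \eqref{errorform} against $D_\tau\e^{n+1}$ and invoking the cancellation $\mathbf{a}\times\mathbf{b}\cdot\mathbf{b}=0$. None of the cross-product terms in \eqref{errorform} ($\M^n\times\Delta\e^{n+1}$, $\nabla\M^n\times\nabla\e^{n+1}$, $\e^n\times\Delta\m^{n+1}$, $\nabla\e^n\times\nabla\m^{n+1}$) pair with the test function $D_\tau\e^{n+1}$ in a way that the antisymmetry identity applies; moreover, testing in $L^2$ against $D_\tau\e^{n+1}$ yields an $L^2$ bound on $D_\tau\e^{n+1}$ plus a telescoping $D_\tau\|\nabla\e^{n+1}\|_{L^2}^2$ term, not an $H^1$ bound on $D_\tau\e^{n+1}$. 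The paper instead observes that Theorem~\ref{temporal_1} directly gives $\|\e^{n+1}\|_{H^1}\le C_0\tau$ and $\|\e^n\|_{H^1}\le C_0\tau$, so $\|D_\tau\e^{n+1}\|_{H^1}\le\tau^{-1}(\|\e^{n+1}\|_{H^1}+\|\e^n\|_{H^1})\le 2C_0$ by triangle inequality; combining with the regularity of $\m$ gives $\|D_\tau\M^{n+1}\|_{H^1}\le C$ in one line. You should replace your energy argument with this.

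Second, your H\"older split $\|\nabla\M^n\|_{L^\infty}\|\nabla\M^{n+1}\|_{L^4}$ is what prevents the induction from closing. The factor $\|\nabla\M^n\|_{L^\infty}\lesssim\|\M^n\|_{W^{2,4}}$ brings the induction constant $C_*$ into the coefficient of the Gagliardo--Nirenberg term, so after Young's inequality you obtain a recursion of the form $\|\M^{n+1}\|_{W^{2,4}}\le C + C\,C_*^{1/(1-\theta)}$ with $1/(1-\theta)>1$; this cannot be made $\le C_*$ uniformly, so the bootstrap fails. The paper's estimate \eqref{use_Miranda-Talenti} uses the split $\|\nabla\M^n\|_{L^6}\|\nabla\M^{n+1}\|_{L^{12}}$ instead: $\|\nabla\M^n\|_{L^6}$ is bounded by the \emph{a priori} $H^2$ estimate \eqref{MnH2} (no $W^{2,4}$ dependence), and $\|\nabla\M^{n+1}\|_{L^{12}}\lesssim\|\M^{n+1}\|_{W^{2,4}}^{5/7}\|\M^{n+1}\|_{H^2}^{2/7}$ has exponent $<1$, so Young yields the genuine contraction $2\|\M^{n+1}\|_{W^{2,4}}\le\|\M^n\|_{W^{2,4}}+C$. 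Iterating this contraction gives the uniform bound without any bootstrap hypothesis on $W^{2,4}$ at all — the only a priori inputs are the uniform $H^2$ bound, the uniform $\|D_\tau\M^{n+1}\|_{H^1}$ bound, and $\|\m_0\|_{W^{2,4}}$. You should adopt this finer H\"older split and drop the bootstrap framing.
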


\begin{proof}
First, from the error estimate \eqref{time_discre} and the regularity assumption \eqref{assumption}, we obtain
\begin{align}
    \sup_{0 \le n \le N} \|\M^{n}\|_{H^{2}} &\le C, \label{MnH2} \\
    \sup_{1 \le n \le N} \|D_{\tau} \M^{n}\|_{H^{1}} &\le C, \label{Dtau} \\
    \tau \sum_{n=1}^{N} \|D_{\tau} \M^{n}\|_{H^{2}}^{2} &\le C. \label{Dtau_sum}
\end{align}

Second, we note that the temporal discretization \eqref{time_discrete_form} of the nonlinear Landau--Lifshitz equation \eqref{LLform1} can be reformulated as a linear elliptic system:
\begin{equation}\label{new_elliptic}
    \begin{aligned}
        - \alpha  \Delta \M^{n+1}  + \M^{n} \times \Delta  \M^{n+1} 
        = - D_{\tau} \M^{n+1} - \nabla \M^{n} \times \nabla \M^{n+1} 
        + \alpha (\nabla  \M^{n} \cdot \nabla \M^{n}) \M^{n+1}.
    \end{aligned}
\end{equation}

Applying the Calder\'on--Zygmund inequality in Appendix \ref{lem:C_Z_inequality} \cite[Section 1.4]{maugeri2000elliptic} to \eqref{new_elliptic} yields
\begin{equation}\label{use_Miranda-Talenti}
\begin{aligned}
\|\M^{n+1}\|_{W^{2,4}} 
&\le C \bigl\|{-}D_{\tau}\M^{n+1} - \nabla\M^{n} \times \nabla\M^{n+1} 
        + \alpha (\nabla\M^{n} \cdot \nabla\M^{n})\M^{n+1}\bigr\|_{L^{4}} \\
&\le C \|D_{\tau}\M^{n+1}\|_{H^{1}} 
        + C \|\nabla\M^{n}\|_{L^{6}} \bigl(\|\nabla\M^{n+1}\|_{L^{12}} 
        + \|\nabla\M^{n}\|_{L^{12}} \|\M^{n+1}\|_{L^{\infty}}\bigr) \\
&\le C + C \|\nabla\M^{n+1}\|_{L^{12}} + C \|\nabla\M^{n}\|_{L^{12}} \\
&\le C + C \bigl( \|\M^{n+1}\|_{W^{2,4}}^{5/7} \|\M^{n+1}\|_{H^{2}}^{2/7} 
                + \|\M^{n}\|_{W^{2,4}}^{5/7} \|\M^{n}\|_{H^{2}}^{2/7} \bigr) \\
&\le \tfrac{1}{2} \|\M^{n+1}\|_{W^{2,4}} + \tfrac{1}{4} \|\M^{n}\|_{W^{2,4}} + C.
\end{aligned}
\end{equation}

The estimates are derived by applying the following steps in sequence:  
(1) the triangle inequality and Hölder’s inequality,  
(2) the uniform bounds on \(\|D_{\tau}\M^{n+1}\|_{H^{1}}\) and \(\|\nabla\M^{n}\|_{L^{6}}\),  
(3) the Gagliardo–Nirenberg interpolation inequality, and  
(4) Young’s inequality with suitably chosen exponents.

Rearranging \eqref{use_Miranda-Talenti} yields  
\[
2 \|\M^{n+1}\|_{W^{2,4}} \le \|\M^{n}\|_{W^{2,4}} + C .
\]

Iterating this inequality gives  
\[
\begin{aligned}
2 \|\M^{n+1}\|_{W^{2,4}} 
&\le \frac{1}{2^{n}} \|\M^{0}\|_{W^{2,4}} + C \sum_{k=0}^{n} \frac{1}{2^{k}} \\
&\le \|\M^{0}\|_{W^{2,4}} + 2C ,
\end{aligned}
\]
where we have used \(\M^{0} = \m_{0}\) together with the initial bound \(\|\m_{0}\|_{W^{2,4}} \le C\) from \eqref{assumption}.  
Consequently,  
\[
\sup_{0 \le n \le N} \|\M^{n}\|_{W^{2,4}} \le C ,
\]
which completes the proof.
\end{proof}

\subsubsection{Lemma \ref{lemma_tau1}}
\label{subsec:Proof of Lemma ref{lemma_tau1}.}

The following lemmas are essential for the nonlinear analysis of the LOD method in the vectorial setting, in contrast to the scalar case treated in \cite[Lemma 10.6]{henning2022superconvergence}, and are not needed in the standard finite element error analysis \cite{gao2014optimal}.

\begin{lemma}\label{lemma_tau1}
Under the same assumptions as in Theorem \ref{temporal_1}, there exists a $\tau$-independent constant $C_0 > 0$ such that
\begin{equation} \label{litte_aim1}
\sup_{0 \le n \le N} \left( \|\Delta e^{n+1}\|_{L^2}^2 + \tau \sum_{k=0}^{n} \|\nabla \Delta e^{k+1}\|_{L^2}^2 \right) \le C_0^2 \tau^2,
\end{equation}
where $e^{n+1}$ denotes the temporal error at time level $t_{n+1}$.
\end{lemma}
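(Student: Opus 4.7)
The plan is to prove Lemma~\ref{lemma_tau1} by induction on $n$, in direct analogy with the proof of Theorem~\ref{temporal_1} but lifted one differential level: instead of testing the error equation \eqref{errorform} with $\e^{n+1}$ and $\Delta \e^{n+1}$, I would test it with $\Delta \e^{n+1}$ and (formally) $\Delta^2 \e^{n+1}$. The base case $\e^0 = \0$ is trivial. For the inductive step I assume \eqref{litte_aim1} at level $n$ and exploit the bounds from Theorem~\ref{temporal_1} together with the regularity $\|\M^n\|_{W^{2,4}} \le C$ supplied by Lemma~\ref{lemma52}. The homogeneous Neumann condition on $\e^{n+1}$, together with the $C^{2,1}$ regularity of $\partial \Omega$ from Assumption~\ref{assumption}, supplies the necessary compatibility; the additional condition $\nabla \Delta \e^{n+1} \cdot \n = \0$ is recovered by reinterpreting each time step of \eqref{time_discrete_form} as a Neumann elliptic problem and invoking standard $H^3$ regularity.

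\paragraph{Main identities.} After integration by parts, the dominant contributions take the form
\begin{align*}
(D_\tau \e^{n+1}, \Delta^2 \e^{n+1}) &= \tfrac{1}{2} D_\tau \|\Delta \e^{n+1}\|_{L^2}^2 + \tfrac{\tau}{2} \|D_\tau \Delta \e^{n+1}\|_{L^2}^2,\\
-\alpha(\Delta \e^{n+1}, \Delta^2 \e^{n+1}) &= \alpha \|\nabla \Delta \e^{n+1}\|_{L^2}^2,\\
(\M^n \times \Delta \e^{n+1}, \Delta^2 \e^{n+1}) &= -(\nabla \M^n \times \Delta \e^{n+1}, \nabla \Delta \e^{n+1}),
\end{align*}
where the last identity uses the antisymmetry of the cross product to discard the contribution $(\M^n \times \nabla \Delta \e^{n+1}, \nabla \Delta \e^{n+1}) = 0$. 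Combining $\|\nabla \M^n\|_{L^\infty} \le C \|\M^n\|_{W^{2,4}} \le C$ with Young's inequality converts the third line into a dissipation-absorbable bound of the form $\varepsilon \|\nabla \Delta \e^{n+1}\|_{L^2}^2 + C_\varepsilon \|\Delta \e^{n+1}\|_{L^2}^2$. The remaining linear cross-products and the truncation contribution $(\R_{\text{tr}}^{n+1}, \Delta^2 \e^{n+1}) = (\Delta \R_{\text{tr}}^{n+1}, \Delta \e^{n+1})$ are controlled by H\"older's inequality, the smoothness of $\m^{n+1}$ guaranteed by \eqref{ass:regularity_estimate}, and the bound $\tau \sum_k \|\Delta \R_{\text{tr}}^k\|_{L^2}^2 \le \tau^2$ already noted below \eqref{errorform}.

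\paragraph{Nonlinear term and Gronwall closure.} The delicate piece is $\alpha(|\nabla \M^n|^2 \M^{n+1} - |\nabla \m^n|^2 \m^{n+1}, \Delta^2 \e^{n+1})$. After one integration by parts and expansion via the identity \eqref{term_common}, each resulting summand is bounded using H\"older's inequality, Lemma~\ref{lemma52}, the Gagliardo--Nirenberg inequality of Appendix~\ref{lem:GN_inequality}, and the inductive hypothesis at level $n$, producing contributions of the schematic form $\varepsilon(\|\nabla \Delta \e^{n+1}\|_{L^2}^2 + \|\nabla \Delta \e^n\|_{L^2}^2) + C_\varepsilon(\|\Delta \e^{n+1}\|_{L^2}^2 + \|\Delta \e^n\|_{L^2}^2 + \|\e^n\|_{H^2}^2)$. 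Summing in $n$, choosing $\varepsilon$ small enough to absorb into the $\alpha$-dissipation, and applying the discrete Gronwall inequality (Lemma~\ref{Gronwall}) for $\tau$ sufficiently small closes the induction with a $\tau$-independent constant $C_0$.

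\paragraph{Main obstacle.} Two difficulties stand out. First, rigorously justifying the boundary-integral cancellations when testing against $\Delta^2 \e^{n+1}$: the condition $\nabla \Delta \e^{n+1} \cdot \n = \0$ is not inherited directly from the original Neumann data and must be recovered by viewing each time step as a Neumann elliptic problem and appealing to $H^3$ regularity on the $C^{2,1}$ domain. Second, the differentiated nonlinear right-hand side contains mixed expressions involving $\nabla^2 \M^n$ and $\nabla \e^n$ that only live in $L^2$; closing the estimate without overpowering the $\alpha$-dissipation depends critically on the $W^{2,4}$ bound from Lemma~\ref{lemma52} combined with a Gagliardo--Nirenberg interpolation that trades the high-order norms on $\e^n$ against the quadratic-in-$\tau$ smallness already furnished by Theorem~\ref{temporal_1}.
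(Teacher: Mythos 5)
The overall skeleton of your argument (induction, energy-type test functions, H\"older plus Gagliardo--Nirenberg plus Young, close with discrete Gronwall) matches the paper, but the specific choice of test function creates a genuine gap that the paper's proof deliberately avoids.

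You propose to test the error equation \eqref{errorform} as-is with $\Delta^2\mathbf{e}^{n+1}$. The paper instead first differentiates \eqref{errorform} in space (applies $\nabla$) and then tests the resulting identity with $-\nabla\Delta\mathbf{e}^{n+1}$. These two pairings differ by boundary integrals carrying the factor $\nabla\Delta\mathbf{e}^{n+1}\cdot\mathbf{n}$: for example, your first ``main identity'' hides the term $\int_{\partial\Omega} D_\tau\mathbf{e}^{n+1}\,(\partial_{\mathbf{n}}\Delta\mathbf{e}^{n+1})\,\mathrm{d}S$, your diffusion identity hides $-\alpha\int_{\partial\Omega}\Delta\mathbf{e}^{n+1}\,(\partial_{\mathbf{n}}\Delta\mathbf{e}^{n+1})\,\mathrm{d}S$, the antisymmetry-cancellation identity for $(\mathbf{M}^n\times\Delta\mathbf{e}^{n+1},\Delta^2\mathbf{e}^{n+1})$ hides $\int_{\partial\Omega}(\mathbf{M}^n\times\Delta\mathbf{e}^{n+1})\cdot(\partial_{\mathbf{n}}\Delta\mathbf{e}^{n+1})\,\mathrm{d}S$, and the truncation-error rewriting $(\mathbf{R}_{\text{tr}}^{n+1},\Delta^2\mathbf{e}^{n+1})=(\Delta\mathbf{R}_{\text{tr}}^{n+1},\Delta\mathbf{e}^{n+1})$ requires both of Green's boundary terms to vanish. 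None of these vanish unless $\partial_{\mathbf{n}}\Delta\mathbf{e}^{n+1}=0$ on $\partial\Omega$, and none of them are sign-definite, so they cannot be dropped or absorbed.

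Your proposed repair --- recover $\nabla\Delta\mathbf{e}^{n+1}\cdot\mathbf{n}=0$ by viewing each time step as a Neumann elliptic problem and invoking $H^3$ regularity on the $C^{2,1}$ domain --- does not work. Higher Sobolev regularity of $\mathbf{e}^{n+1}$ only guarantees that the trace $\nabla\Delta\mathbf{e}^{n+1}\cdot\mathbf{n}\big|_{\partial\Omega}$ is well-defined; it does not force it to equal zero. The elliptic problem \eqref{time_discrete_form} carries exactly one natural boundary condition, $\nabla\mathbf{M}^{n+1}\cdot\mathbf{n}=0$, and neither the discrete scheme nor the continuous LL equation imposes a third-order Neumann condition. (Compare the heat equation with homogeneous Neumann data: $\partial_{\mathbf{n}}u=0$ does not imply $\partial_{\mathbf{n}}\Delta u=0$.) The paper sidesteps the issue entirely: after differentiating the error equation, only one integration by parts is performed --- on the time-difference term --- and it is done in the direction that produces the boundary integrand $(\partial_{\mathbf{n}}D_\tau\mathbf{e}^{n+1})\,\Delta\mathbf{e}^{n+1}$, which vanishes because $\nabla\mathbf{e}^{n+1}\cdot\mathbf{n}=0$ immediately gives $D_\tau\nabla\mathbf{e}^{n+1}\cdot\mathbf{n}=0$. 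All the other terms $T_1,\dots,T_4$ are estimated directly by expanding $\nabla(\cdot)$ with Leibniz's rule and applying H\"older, with no further integration by parts and hence no further boundary terms. To make your proof correct, replace the test function $\Delta^2\mathbf{e}^{n+1}$ by the paper's ``differentiate then test with $-\nabla\Delta\mathbf{e}^{n+1}$'' construction; after that substitution the remaining pieces of your argument (use of Lemma~\ref{lemma52}, Theorem~\ref{temporal_1}, Gagliardo--Nirenberg, and Gronwall) go through essentially as you outline.
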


\begin{proof}
We continue the proof of \eqref{litte_aim1} by mathematical induction. Differentiating the error equation \eqref{errorform} with respect to $\mathbf{x}$ and testing the resulting equation with $-\nabla \Delta e^{n+1}$, we obtain
\begin{equation}
    \begin{aligned}
    & ( D_{\tau} \Delta \e^{n+1}, \Delta \e^{n+1} ) + \alpha(\nabla \Delta \e^{n+1},\nabla \Delta \e^{n+1}) \\
    =& \underbrace{(\nabla  (  \nabla \M^n \times \nabla \e^{n+1} + \M^n \times \Delta \e^{n+1} ),\nabla \Delta \e^{n+1})  }_{:=T_1}  + \underbrace{(\nabla  (\nabla \e^n \times \nabla \m^{n+1} + \e^n \times  \Delta \m^{n+1} ),\nabla \Delta \e^{n+1}) }_{:=T_2} \\
    & + \underbrace{\alpha (\nabla(|\nabla \M^{n}|^2 \M^{n+1} - |\nabla \m^{n}|^2 \m^{n+1}), \nabla \Delta \e^{n+1})}_{:=T_3} + \alpha \underbrace{(\nabla \R_{\text{tr}}^{n+1}, \nabla \Delta \e^{n+1})}_{:=T_4} .
    \end{aligned}
\end{equation}
Moreover, from the boundary condition $\nabla \e^{n+1} \cdot \mathbf{n} = 0$, we deduce that $D_{\tau} \nabla \e^{n+1} \cdot \mathbf{n} = 0$ on $\partial\Omega$. Next, we estimate the four terms $T_1$--$T_4$ sequentially.

\noindent \emph{Estimate of $T_1$}: Applying Leibniz's rule yields
\begin{equation*}
    \begin{aligned}
    & \nabla  (  \nabla \M^n \times \nabla \e^{n+1} + \M^n \times \Delta \e^{n+1})\\
    =& \nabla^2 \M^n \times \nabla \e^{n+1} + \nabla \M^n \times \nabla^2 \e^{n+1} + \nabla \M^n \times \Delta \e^{n+1} +  \M^n \times \nabla \Delta \e^{n+1} .
    \end{aligned}
\end{equation*}

Then, a direct application of H\"{o}lder's inequality and the Sobolev embedding $H^{1}(\Omega) \hookrightarrow L^{4}(\Omega)$ gives
\begin{equation}
    \begin{aligned}
        T_{1} &= \bigl(\nabla^{2}\M^{n} \times \nabla\e^{n+1} 
               + \nabla\M^{n} \times \nabla^{2}\e^{n+1}  + \nabla\M^{n} \times \Delta\e^{n+1} 
               + \M^{n} \times \nabla\Delta\e^{n+1},\, \nabla\Delta\e^{n+1}\bigr) \\
        &\leq \bigl(\|\nabla^{2}\M^{n}\|_{L^{4}} \|\nabla\e^{n+1}\|_{L^{4}} 
       + \|\nabla\M^{n}\|_{L^{\infty}} (\|\nabla^{2}\e^{n+1}\|_{L^{2}} + \|\Delta\e^{n+1}\|_{L^{2}})\bigr)
       \|\nabla\Delta\e^{n+1}\|_{L^{2}} \\
        &\leq C \Bigl(\|\M^{n}\|_{W^{2,4}} \|\nabla\e^{n+1}\|_{H^{1}} 
               + \|\nabla\M^{n}\|_{L^{\infty}} \|\Delta\e^{n+1}\|_{L^{2}}\Bigr) 
               \|\nabla\Delta\e^{n+1}\|_{L^{2}},
    \end{aligned}
\end{equation}
where $\| \M^{n}\|_{W^{2,4}}$ and $\|\nabla \M^{n}\|_{L^{\infty}}$ are bounded by Lemma \ref{lemma52}. By Theorem \ref{temporal_1}, $\|\nabla \e^{n+1}\|_{L^{2}}^{2} \leq C \tau^{2}$, and elliptic regularity yields $\|\nabla^{2} \e^{n+1}\|_{L^{2}} \leq C \|\Delta \e^{n+1}\|_{L^{2}}$.

Furthermore, applying Young's inequality and $H^1$ norm of $||\nabla \e^{n+1}||_{H^1}$, we bound $T_1$ by 
\begin{equation}
    \begin{aligned}
    T_1\le & \eps^{-1}  ||\nabla \e^{n+1}||_{H^1} ^2 + \eps^{-1}  ||\Delta \e^{n+1} ||_{L^2}^2 + \eps ||\nabla \Delta \e^{n+1} ||_{L^2}^2 \\
    \le &  \eps^{-1} \Bigl( || \nabla \e^{n+1}||_{L^2}^2 + || \nabla^2 \e^{n+1}||_{L^2} ^2 \Bigr) + \eps^{-1}  ||\Delta \e^{n+1} ||_{L^2}^2  + \eps ||\nabla \Delta \e^{n+1} ||_{L^2}^2 \\
    \le &  \eps^{-1}  \tau^2  + \eps^{-1}  ||\Delta \e^{n+1} ||_{L^2}^2  + \eps ||\nabla \Delta \e^{n+1} ||_{L^2}^2 ,
    \end{aligned}
\end{equation}

\noindent \emph{Estimate of $T_2$}: By Theorem \ref{temporal_1}, $\| \e^{n}\|_{L^{2}} \leq C \tau$. Applying Leibniz's rule and H\"older's inequality,
\begin{equation}
    \begin{aligned}
        T_{2} &= \bigl(\nabla(\nabla \e^{n} \times \nabla \m^{n+1} 
               + \e^{n} \times \Delta \m^{n+1}),\, \nabla \Delta \e^{n+1}\bigr) \\
        &= \bigl(\nabla^{2} \e^{n} \times \nabla \m^{n+1} 
               + \nabla \e^{n} \times \nabla^{2} \m^{n+1}  + \nabla \e^{n} \times \Delta \m^{n+1} 
               + \e^{n} \times \nabla \Delta \m^{n+1},\, \nabla \Delta \e^{n+1}\bigr) \\
        &\leq \Bigl(\|\nabla^{2} \e^{n}\|_{L^{2}} \|\nabla \m^{n+1}\|_{L^{\infty}} 
               + \|\nabla \e^{n}\|_{L^{4}} \|\nabla^{2} \m^{n+1}\|_{L^{4}} 
     + \|\nabla \e^{n}\|_{L^{4}} \|\Delta \m^{n+1}\|_{L^{4}}\Bigr) 
               \|\nabla \Delta \e^{n+1}\|_{L^{2}} \\
        &\quad + \| \e^{n}\|_{L^{\infty}} \|\nabla \Delta \m^{n+1}\|_{L^{2}} 
               \|\nabla \Delta \e^{n+1}\|_{L^{2}}.
    \end{aligned}
\end{equation}

Using the Gagliardo-Nirenberg inequality, $\|\nabla \m^{n+1}\|_{L^{\infty}} \leq \| \m^{n+1}\|_{W^{2,4}}$, and similarly $\|\nabla^{2} \m^{n+1}\|_{L^{4}} \leq \| \m^{n+1}\|_{W^{2,4}}$, $\|\Delta \m^{n+1}\|_{L^{4}} \leq \| \m^{n+1}\|_{W^{2,4}}$. Thus,
\begin{equation}
    \begin{aligned}
        T_{2} &\leq \bigl(\|\nabla^{2} \e^{n}\|_{L^{2}} 
               + 2\|\nabla \e^{n}\|_{L^{4}}\bigr) \|\nabla \Delta \e^{n+1}\|_{L^{2}}  + \| \e^{n}\|_{L^{\infty}} \|\nabla \Delta \m^{n+1}\|_{L^{2}} 
               \|\nabla \Delta \e^{n+1}\|_{L^{2}} \\
              &\leq C \bigl(\|\Delta \e^{n}\|_{L^{2}} + \| \e^{n}\|_{H^{2}}\bigr) 
               \|\nabla \Delta \e^{n+1}\|_{L^{2}} \\
              &\leq \epsilon^{-1} \|\Delta \e^{n}\|_{L^{2}}^{2} 
               + \epsilon^{-1} \tau^{2} 
               + \epsilon \|\nabla \Delta \e^{n+1}\|_{L^{2}}^{2},
    \end{aligned}
\end{equation}
where the second inequality uses the Sobolev embedding $\|\nabla \e^{n}\|_{L^{4}} \leq C \|\nabla \e^{n}\|_{H^{1}} \leq C \| \e^{n}\|_{H^{2}}$ and the estimate $\| \e^{n}\|_{H^{2}} \leq C(\| \e^{n}\|_{L^{2}} + \|\Delta \e^{n}\|_{L^{2}})$. Moreover, the boundedness of $\nabla \Delta \m^{n+1} := \nabla \Delta \m(t_{n+1})$ in $L^{2}$ follows from the identity
\[
\nabla \Delta \m(t_{n+1}) - \nabla \Delta \m(t_{n}) = \int_{t_{n}}^{t_{n+1}} \partial_{t} \nabla \Delta \m(t) \, \mathrm{d}t,
\]
which yields
\begin{equation}\label{mnaDel_m}
\begin{aligned}
\|\nabla \Delta \m(t_{n+1})\|_{L^{2}} 
&\le \sum_{k=0}^{n} \|\nabla \Delta \m^{k+1} - \nabla \Delta \m^{k}\|_{L^{2}} 
   + \|\nabla \Delta \m_0\|_{L^{2}} \\
&\le \sum_{k=0}^{n} \Bigl\| \int_{t_k}^{t_{k+1}} \partial_t \nabla \Delta \m(t) \, \d t \Bigr\|_{L^{2}} 
   + \|\nabla \Delta \m_0\|_{L^{2}} \\
&\le \tau N \sup_{t \in [0,T]} \|\partial_t \nabla \Delta \m(t)\|_{L^{2}} 
   + \| \m_0\|_{H^{3}} \\
&\le C T \|\partial_t \m\|_{L^{\infty}(0,T; H^{3})} + \| \m_0\|_{H^{3}} \le C,
\end{aligned}
\end{equation}
where $T = N\tau$ is the final time. The uniform bound is ensured by Assumption \ref{assumption}, which guarantees that 
$\|\partial_{t} \m\|_{L^{\infty}(0,T; H^{3}(\Omega))}$ and $\| \m_0\|_{H^{3}(\Omega)}$ are finite. Applying Young's inequality then gives
\begin{equation}
    \begin{aligned}
     T_2 \le \eps^{-1}  \|\Delta \e^n\|_{L^2}^2 + \eps^{-1} \tau^2 + \eps \|\nabla \Delta \e^{n+1}\|_{L^2}^2 .
    \end{aligned}
\end{equation}

\noindent \emph{Estimate of $T_3$}: Applying Leibniz's rule,
\begin{equation}\label{eq:grad_expansion}
    \begin{aligned}
    \nabla \bigl( |\nabla \M^{n}|^{2} \M^{n+1} - |\nabla \m^{n}|^{2} \m^{n+1} \bigr) = & 
 2\nabla^{2} \M^{n} \nabla \M^{n} \, \e^{n+1} 
       + |\nabla \M^{n}|^{2} \nabla \e^{n+1} 
    + 2\nabla^{2} \e^{n} \nabla \e^{n} \, \m^{n+1} 
       + |\nabla \e^{n}|^{2} \nabla \m^{n+1} \\
    &+ 2\nabla^{2} \e^{n} \nabla \m^{n} \m^{n+1} 
       + 2\nabla \e^{n} \nabla^{2} \m^{n} \, \m^{n+1} 
    + 2\nabla \e^{n} \nabla \m^{n} \nabla \m^{n+1}.
    \end{aligned}
\end{equation}
Substituting \eqref{eq:grad_expansion} into $T_3$ and applying H\"older's inequality term by term,
\begin{equation}\label{eq:T3_holder}
\begin{aligned}
T_3 &= \alpha \bigl( \nabla (|\nabla \M^{n}|^{2} \M^{n+1} - |\nabla \m^{n}|^{2} \m^{n+1}), \nabla \Delta \e^{n+1} \bigr) \\
&\le \alpha \Bigl[ \bigl( \|\nabla^{2} \M^{n}\|_{L^{4}} \|\nabla \M^{n}\|_{L^{\infty}} \| \e^{n+1}\|_{L^{4}} 
       + \|\nabla \M^{n}\|_{L^{\infty}}^{2} \|\nabla \e^{n+1}\|_{L^{2}} \\
&\quad + \|\nabla^{2} \e^{n}\|_{L^{2}} \|\nabla \e^{n}\|_{L^{\infty}} \| \m^{n+1}\|_{L^{\infty}} 
       + \|\nabla \e^{n}\|_{L^{2}} \|\nabla \e^{n}\|_{L^{\infty}} \|\nabla \m^{n+1}\|_{L^{\infty}} \bigr) \\
&\quad + \bigl( \|\nabla^{2} \e^{n}\|_{L^{2}} \|\nabla \m^{n}\|_{L^{\infty}} \| \m^{n+1}\|_{L^{\infty}} 
       + \|\nabla \e^{n}\|_{L^{4}} \|\nabla^{2} \m^{n}\|_{L^{4}} \| \m^{n+1}\|_{L^{\infty}} \\
&\quad + \|\nabla \e^{n}\|_{L^{2}} \|\nabla \m^{n}\|_{L^{\infty}} \|\nabla \m^{n+1}\|_{L^{\infty}} \bigr) \Bigr] 
       \|\nabla \Delta \e^{n+1}\|_{L^{2}}.
\end{aligned}
\end{equation}

Since $\M^n, \m^n \in W^{2,4}$, we have $\|\nabla^2 \M^n\|_{L^4} \le C$, $\|\nabla \M^n\|_{L^{\infty}} \le C$, and similarly for $\m^n$. Using $\|\nabla^{2} \e^{n}\|_{L^{2}} \le C \|\Delta \e^{n}\|_{L^{2}}$ and the Sobolev embedding $H^{1} \hookrightarrow L^{4}$,
\begin{equation}\label{eq:T3_estimate}
        T_3 \le \bigl( \| \e^{n+1}\|_{H^{1}} 
              + \|\Delta \e^{n}\|_{L^{2}} \|\nabla \e^{n}\|_{L^{\infty}}
              + \|\nabla \e^{n}\|_{H^{1}} 
              + \|\Delta \e^{n}\|_{L^{2}} + \|\nabla \e^{n}\|_{L^{2}} \|\nabla \e^{n}\|_{L^{\infty}}
              + \|\nabla \e^{n}\|_{L^{2}} \bigr) 
              \|\nabla \Delta \e^{n+1}\|_{L^{2}}. 
\end{equation}

By the induction hypothesis, $\|\Delta \e^{n}\|_{L^{2}} \le C_{0} \tau \le \epsilon$ with $C_{0} \tau \le \epsilon$. Theorem \ref{temporal_1} gives
\[
\| \e^{n}\|_{H^{1}} \le C \tau \quad \text{and} \quad \| \e^{n+1}\|_{H^{1}} \le C \tau.
\]
The Sobolev embedding $H^{2}(\Omega) \hookrightarrow L^{\infty}(\Omega)$ yields
\[
\|\nabla \e^{n}\|_{L^{\infty}} \le C \|\nabla \e^{n}\|_{H^{2}} 
\le C \bigl( \|\nabla \e^{n}\|_{L^{2}} + \|\nabla \Delta \e^{n}\|_{L^{2}} \bigr),
\]
which is used in \eqref{eq:T3_estimate}. This simplifies to
\begin{equation}
    \begin{aligned}
        T_3 &\le \bigl( \tau 
              + \|\Delta \e^{n}\|_{L^{2}} 
              + \|\nabla \e^{n}\|_{L^{2}} \|\nabla \e^{n}\|_{L^{\infty}}
              + \|\nabla \e^{n}\|_{H^{1}} \bigr) 
              \|\nabla \Delta \e^{n+1}\|_{L^{2}} \\
        &\le \bigl( \tau 
              + \|\Delta \e^{n}\|_{L^{2}} 
              + \epsilon \|\nabla \e^{n}\|_{H^{2}} \bigr) 
              \|\nabla \Delta \e^{n+1}\|_{L^{2}} \\
        &\le \bigl( \tau 
              + \|\Delta \e^{n}\|_{L^{2}} + \epsilon \|\nabla \Delta \e^{n}\|_{L^{2}} \bigr) 
              \|\nabla \Delta \e^{n+1}\|_{L^{2}} .
    \end{aligned}
\end{equation}
Finally, applying Young's inequality,
\begin{equation}
    T_3 \le \epsilon^{-1} \tau^{2} 
        + \epsilon^{-1} \|\Delta \e^{n}\|_{L^{2}}^{2} 
        + \epsilon \|\nabla \Delta \e^{n+1}\|_{L^{2}}^{2}.
\end{equation}

\noindent \emph{Estimate of $T_4$}:
Applying Young's inequality and the truncation error bound from Remark \ref{truncation},
\begin{equation}\label{eq:T4_estimate}
    T_4 \le \epsilon^{-1} \|\nabla \R_{\text{tr}}^{n+1}\|_{L^2}^2 + \epsilon \|\nabla \Delta \e^{n+1}\|_{L^{2}}^{2}.
\end{equation}

Combining all estimates and choosing $\eps$ sufficiently small,
\begin{equation*}
    \|\Delta \e^{n+1}\|_{L^2}^2 + \tau \sum_{k=0}^{n} \|\nabla \Delta \e^{k+1}\|_{L^2}^2 
    \le \eps^{-1} \tau^2 + \eps^{-1} \sum_{k=0}^{n} \|\Delta \e^{k}\|_{L^2}^2.
\end{equation*}
Gronwall's inequality (Lemma \ref{Gronwall}) then yields \eqref{litte_aim1}, and \eqref{time_discre} follows by taking $C_0 \geq \exp(TC)$.
\end{proof}

\subsubsection{Lemma \ref{lemma_tau2}}
\label{subsec:Proof ofref{lemma_tau2}.}

We now prove a higher-order version of Lemma \ref{lemma_tau1}. Applying $\nabla \Delta$ to \eqref{errorform} yields a polyharmonic boundary value problem. Appropriate higher-order boundary conditions, such as $\partial_{\n} (-\nabla \Delta) e^{n+1} = 0$, must be imposed on $\partial\Omega$. The following lemma addresses this setting.

\begin{lemma}\label{lemma_tau2}
Under the same assumptions as in Theorem \ref{temporal_1}, there exists a $\tau$-independent $C_0 > 0$ such that
    \begin{equation}\label{litte_aim2}
    \sup_{0 \le n \le N}( ||\nabla \Delta \e^{n+1}||_{L^2}^2 + \tau \sum \limits _{k=0} ^{n} ||\Delta^2 \e^{k+1}||_{L^2}^2 ) \le C_0^2 \tau^2 . 
\end{equation}
\end{lemma}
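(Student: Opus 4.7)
The plan is to mimic the energy argument of Lemma \ref{lemma_tau1} one order higher. Specifically, I would apply the Laplacian $\Delta$ to the error equation \eqref{errorform} and then test the resulting equation with $-\Delta^{2}\e^{n+1}$ (equivalently, apply $\nabla\Delta$ and test with $\nabla\Delta\e^{n+1}$). Using the polyharmonic boundary condition $\partial_{\n}\Delta\e^{n+1}=0$ on $\partial\Omega$, obtained by differentiating the Neumann condition $\nabla\e^{n+1}\cdot\n=0$, integration by parts produces the principal identity
\begin{equation*}
\tfrac{1}{2} D_{\tau}\|\nabla\Delta\e^{n+1}\|_{L^{2}}^{2} + \alpha\|\Delta^{2}\e^{n+1}\|_{L^{2}}^{2} = \tilde{T}_{1}+\tilde{T}_{2}+\tilde{T}_{3}+\tilde{T}_{4},
\end{equation*}
where $\tilde T_{i}$ are the third-order analogues of the $T_i$ in Lemma \ref{lemma_tau1}. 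A crucial structural cancellation remains available: since $(\M^{n}\times\Delta^{2}\e^{n+1})\cdot\Delta^{2}\e^{n+1}=0$ pointwise, the worst contribution of $\Delta(\M^{n}\times\Delta\e^{n+1})$ to $\tilde T_{1}$ vanishes, leaving only terms involving strict derivatives of $\M^{n}$.

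For $\tilde T_{1}$, I would expand $\Delta(\M^{n}\times\Delta\e^{n+1}+\nabla\M^{n}\times\nabla\e^{n+1})$ by Leibniz's rule and bound each piece using Lemma \ref{lemma52} to control $\|\nabla\M^{n}\|_{L^{\infty}}$, $\|\nabla^{2}\M^{n}\|_{L^{4}}$, the Lemma \ref{lemma_tau1} estimates $\|\Delta\e^{n+1}\|_{L^{2}}\le C_{0}\tau$ and $\tau\sum\|\nabla\Delta\e^{k+1}\|_{L^{2}}^{2}\le C_{0}^{2}\tau^{2}$, the Sobolev embeddings $H^{1}\hookrightarrow L^{4}$ and $H^{2}\hookrightarrow L^{\infty}$, and elliptic regularity of the form $\|\e^{n+1}\|_{H^{3}}\le C(\|\e^{n+1}\|_{L^{2}}+\|\nabla\Delta\e^{n+1}\|_{L^{2}})$. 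Young's inequality then absorbs the top-order factor $\|\Delta^{2}\e^{n+1}\|_{L^{2}}$ into the dissipation on the left. Term $\tilde T_{2}$ is analogous but simpler, because derivatives up to third order of $\m^{n+1}$ are bounded uniformly by Assumption \ref{assumption} and a variant of \eqref{mnaDel_m} applied to $\nabla^{2}\Delta \m$, using $\partial_{t}\m\in L^{\infty}(0,T;H^{4})$ together with $\m_{0}\in H^{4}$. The cubic nonlinearity $\tilde T_{3}$ is treated like $T_{3}$ in Lemma \ref{lemma_tau1}: one distributes $\Delta$ across the product in \eqref{term_common}, applies H\"older's inequality, and uses the induction hypothesis $\|\nabla\Delta\e^{n}\|_{L^{2}}\le C_{0}\tau$ together with the smallness $C_{0}\tau\le\eps$ to absorb top-order terms. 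Finally $\tilde T_{4}$ is the truncation contribution, controlled by $\tau\sum\|\Delta\R_{\mathrm{tr}}^{k}\|_{L^{2}}^{2}\le C\tau^{2}$, which is precisely the bound granted by the stronger regularity $\partial_{tt}\m\in L^{2}(0,T;H^{2})$ recorded in Remark \ref{truncation}.

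Summing in $k$ from $0$ to $n$, noting $\e^{0}=\0$, and invoking the discrete Gronwall inequality (Lemma \ref{Gronwall}) yields
\begin{equation*}
\|\nabla\Delta\e^{n+1}\|_{L^{2}}^{2} + \tau\sum_{k=0}^{n}\|\Delta^{2}\e^{k+1}\|_{L^{2}}^{2} \le C\tau^{2},
\end{equation*}
which closes the induction once $\tau$ is small enough that $C_{0}\tau\le\eps$, for any $C_{0}\ge\exp(TC)$.

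The main obstacle will be the contributions to $\tilde T_{1}$ and $\tilde T_{3}$ that formally require control of $\M^{n}$ beyond the $W^{2,4}$-bound of Lemma \ref{lemma52}, in particular factors like $\nabla\Delta\M^{n}$ or $\Delta\M^{n}$ in strong norms. I expect to circumvent this by carefully reorganising the Leibniz expansion so that every ``bad'' derivative lands either on the error $\e^{n}$ or $\e^{n+1}$ (which is small by Theorem \ref{temporal_1} and Lemma \ref{lemma_tau1}) or on the exact solution $\m$ (which enjoys $H^{4}$-regularity by Assumption \ref{assumption}); the pointwise orthogonality $(\M^{n}\times v)\cdot v=0$ will, as above, eliminate the genuinely top-order contribution. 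Should this reorganisation be insufficient for some residual term, the fallback is to upgrade the regularity of $\M^{n}$ via a $W^{3,4}$-bound obtained by applying the Calder\'on--Zygmund inequality of Appendix \ref{lem:C_Z_inequality} to a once-differentiated version of \eqref{new_elliptic}, in the same spirit as the proof of Lemma \ref{lemma52}.
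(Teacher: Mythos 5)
Your overall strategy --- apply $\Delta$ (equivalently $\nabla\Delta$) to the error equation \eqref{errorform}, test against the corresponding higher-order test function, exploit the pointwise cancellation $(\M^{n}\times v)\cdot v=0$ to remove the leading bad term, and close via Lemma~\ref{lemma52}, Lemma~\ref{lemma_tau1}, Sobolev embeddings and the discrete Gronwall inequality --- is exactly the paper's. The concrete gap is in your boundary treatment. You assert $\partial_{\n}\Delta\e^{n+1}=0$ on $\partial\Omega$, ``obtained by differentiating the Neumann condition $\nabla\e^{n+1}\cdot\n=\0$,'' but this deduction fails: differentiating the Neumann condition along $\partial\Omega$ controls only tangential derivatives of $\partial_{\n}\e^{n+1}$, while $\partial_{\n}\Delta\e^{n+1}$ contains the pure third normal derivative $\partial_{\n}^{3}\e^{n+1}$, which is unconstrained by the Neumann condition (and, unlike the pure heat equation, the coupling terms $\nabla\M^{n}\times\nabla\e^{n+1}$, $\nabla\e^{n}\times\nabla\m^{n+1}$, etc.\ do not have vanishing normal derivative on $\partial\Omega$, so the PDE itself does not force $\partial_{\n}\Delta\e^{n+1}=0$ either). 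Consequently the boundary integral generated by integrating by parts cannot simply be dropped. The paper circumvents this by organizing the integrations by parts so that the boundary integrand carries the factor $h(\e^{n+1})$ --- the left-hand side of the interior identity \eqref{interier} --- and then invokes Grisvard's trace theorem to extend $h(\e^{n+1})=0$ up to $\partial\Omega$, whence the boundary integral vanishes. You would need that trace argument (or an equivalent one) to make the energy identity rigorous; the boundary condition you invoke does not hold.

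A secondary remark: your fallback of upgrading Lemma~\ref{lemma52} to a $W^{3,4}$-bound on $\M^{n}$ via a differentiated Calder\'on--Zygmund estimate is unnecessary. The paper controls the terms that appear to require $\|\nabla\Delta\M^{n}\|$ by the elementary splitting of $\M^{n}$ into exact solution plus temporal error, yielding $\|\nabla\Delta\M^{n}\|_{L^{2}}\le\|\nabla\Delta\e^{n}\|_{L^{2}}+\|\nabla\Delta\m^{n}\|_{L^{2}}\le C_{0}\tau+C$ (cf.~\eqref{third_Mn}); the first piece is the induction hypothesis of the very lemma you are proving, and the second is bounded using $\m_{0}\in H^{4}$ and $\partial_{t}\m\in L^{\infty}(0,T;H^{3})$ from Assumption~\ref{assumption}. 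This realizes your intuition that the ``bad'' derivative can be redistributed, but via a triangle inequality rather than a Leibniz reorganization, and without any additional regularity on $\M^{n}$.
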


\begin{proof}
Similarly, we will prove \eqref{litte_aim2} by mathematical induction. 
First, performing $\Delta$ on \eqref{errorform}, we obtain
\begin{equation}\label{interier}
	\begin{aligned}
		&D_{\tau} \Delta \e^{n+1}  - \alpha  \Delta^2 \e^{n+1}  + \Delta (\M^{n} \times \Delta  \e^{n+1} +   \nabla \M^{n} \times \nabla \e^{n+1} + \e^{n} \times \Delta  \m^{n+1} + \nabla \e^{n} \times \nabla \m^{n+1})\\
		=&\alpha \Delta ( |\nabla  \M^{n} |^2 \M^{n+1} -|\nabla  \m^{n} |^2 \m^{n+1} ) - \Delta \R_{\text{tr}}^{n+1},
	\end{aligned}
\end{equation}
for every $\x \in \Omega.$ Above equation \eqref{interier} can be rewritten as a function $h$ with variable $\e^{n+1}$ satisfies
\begin{equation}
h(\e^{n+1}) = \0 ,
\end{equation}
for every $\x \in \Omega.$ 

Then, applying the operator $\nabla \Delta$ to \eqref{errorform} with respect to $\mathbf{x}$ and testing the resulting equation with $\nabla \Delta \mathbf{e}^{n+1}$, we obtain via integration by parts that
\begin{equation}
    \begin{aligned}
        &(D_{\tau} \Delta \e^{n+1},\, \Delta^{2} \e^{n+1}) 
         + \alpha \|\Delta^{2} \e^{n+1}\|^{2} \\
        ={} &\underbrace{\bigl\langle \Delta(\M^{n} \times \Delta \e^{n+1} + \nabla \M^{n} \times \nabla \e^{n+1}) \bigr\rangle}_{:=J_1}
         + \underbrace{\bigl\langle \Delta(\e^{n} \times \Delta \m^{n+1} + \nabla \e^{n} \times \nabla \m^{n+1}) \bigr\rangle}_{:=J_2} \\
        &\quad + \alpha \underbrace{\bigl\langle \Delta(|\nabla \M^{n}|^{2} \M^{n+1} - |\nabla \m^{n}|^{2} \m^{n+1}) \bigr\rangle}_{:=J_3}
         + \alpha \underbrace{\bigl\langle \Delta \R_{\mathrm{tr}}^{n+1} \bigr\rangle}_{:=J_4}  + \int_{\partial \Omega} (\nabla \Delta \e^{n+1} \cdot \n) h(\e^{n+1}) \, \d\S,
    \end{aligned}
\end{equation}
where we denote $\langle \cdot \rangle := ( \cdot ,\, \Delta^{2} \e^{n+1})$ for brevity.
By \cite[Theorem 1.5.1.2]{grisvard2011elliptic}, the identity \eqref{interier} extends to the boundary, i.e., it holds for all $\x \in \partial \Omega$. Consequently, the boundary integral vanishes.

We now establish the temporal estimates for the three terms individually. 
First, noting that $\Delta = \nabla \cdot \nabla$ and employing the Leibniz's rule, we obtain
\begin{equation}\label{eq:laplace_expansion_corrected}
\begin{aligned}
\Delta\bigl( \M^{n} \times \Delta\e^{n+1} + \nabla\M^{n} \times \nabla\e^{n+1} \bigr)
= & \nabla\Delta\M^{n} \times \nabla\e^{n+1} 
+ \nabla^{2}\M^{n} \times \Delta\e^{n+1} 
 + \Delta\M^{n} \times \nabla^{2}\e^{n+1} \\
& + \Delta\M^{n} \times \Delta\e^{n+1} 
 + 3\nabla\M^{n} \times \nabla\Delta\e^{n+1} 
+ \M^{n} \times \Delta^{2}\e^{n+1}.
\end{aligned}
\end{equation}

\noindent \emph{Estimate of $J_1$}: Then, using expansion \eqref{eq:laplace_expansion_corrected} and applying H\"older's inequality term by term, we achieve
\begin{equation}
\begin{aligned}
J_1 &= \bigl(\nabla\Delta\M^{n}\times\nabla\e^{n+1} + \nabla^{2}\M^{n}\times\Delta\e^{n+1} 
      + \Delta\M^{n}\times\nabla^{2}\e^{n+1}  + \Delta\M^{n}\times\Delta\e^{n+1} 
     3\nabla\M^{n}\times\nabla\Delta\e^{n+1} + \M^{n}\times\Delta^{2}\e^{n+1}, \Delta^{2}\e^{n+1}\bigr) \\
    &\leq \Bigl( \|\nabla\Delta\M^{n}\|_{L^{2}} \|\nabla\e^{n+1}\|_{L^{\infty}} 
            + \|\nabla^{2}\M^{n}\|_{L^{4}} \|\Delta\e^{n+1}\|_{L^{4}} 
 + \|\Delta\M^{n}\|_{L^{4}} (\|\nabla^{2}\e^{n+1}\|_{L^{4}} + \|\Delta\e^{n+1}\|_{L^{4}}) \\
    &\qquad + 4\|\nabla\M^{n}\|_{L^{\infty}} \|\nabla\Delta\e^{n+1}\|_{L^{2}} \Bigr)
            \|\Delta^{2}\e^{n+1}\|_{L^{2}} \\
    &\leq \epsilon^{-1}\tau^{2} + \epsilon^{-1}\|\nabla\Delta\e^{n+1}\|_{L^{2}}^{2} 
            + \epsilon\|\Delta^{2}\e^{n+1}\|_{L^{2}}^{2}.
\end{aligned}
\end{equation}
where by mathematical induction assumption we have $||\nabla \Delta \e^n||_{L^2}=||\nabla \Delta( \m^n -\M^n)||_{L^2} \le C_0\tau \le C.$ Furthermore, applying the triangle inequality yields 
\begin{equation}\label{third_Mn}
    ||\nabla \Delta\M^n||_{L^2} \le C_0 \tau + ||\nabla \Delta\m^n||_{L^2} \le C_0 \tau +  T ||\partial_t  \m(t)||_{L^{\infty}(t_k , t_{k+1};H^3(\Omega))}   + ||\m_0||_{H^3} \le C .
\end{equation}
Besides, from Lemma \ref{lemma52} it follows that $||\Delta \M^n||_{L^4}, ||\nabla^2 \M^n||_{L^4} \le C$ and Gagliardo-Nirenberg inequality gives that $||\nabla \M^n||_{L^{\infty}}\le ||\M^n||_{W^{2,4}}.$ Combining this with the Sobolev embedding $H^2(\Omega) \hookrightarrow L^{\infty}(\Omega)$ 
and applying Theorem \ref{temporal_1}, we obtain
\begin{equation*}
    ||\nabla \e^{n+1}||_{L^{\infty}} \le C ||\nabla \e^{n+1}||_{H^2} \le C (||\nabla \e^{n+1}||_{L^2} + ||\nabla \Delta \e^{n+1}||_{L^2}) \le C (\tau + ||\nabla \Delta \e^{n+1}||_{L^2}).
\end{equation*}
Since we know that $||\nabla^2 \e^{n+1}||_{L^4} \le C ||\Delta \e^{n+1}||_{L^4}.$ By Sobolev embedding and the definition of $H^1$ norm, we have
\begin{equation}
    ||\Delta \e^{n+1}||_{L^4} \le C ||\Delta \e^{n+1}||_{H^1} \le C(||\Delta \e^{n+1}||_{L^2} + ||\nabla \Delta \e^{n+1}||_{L^2}).
\end{equation}

\noindent \emph{Estimate of $J_2$}: Second, applying the same operations as previously described, we obtain
\begin{equation}
    \begin{aligned}
    J_2={} &\bigl(\nabla\Delta\e^{n}\times\nabla\m^{n+1} + \nabla^{2}\e^{n}\times\Delta\m^{n+1} + \Delta\e^{n}\times\nabla^{2}\m^{n+1} + \nabla\e^{n}\times\nabla\Delta\m^{n+1},\, \Delta^{2}\e^{n+1}\bigr) \\
        &\quad + \bigl(\Delta\e^{n}\times\Delta\m^{n+1} + 2\nabla\e^{n}\times\nabla\Delta\m^{n+1}  + \e^{n}\times\Delta^{2}\m^{n+1},\, \Delta^{2}\e^{n+1}\bigr) \\
    \leq{} &\Bigl(\|\nabla\Delta\e^{n}\|_{L^{2}} \|\nabla\m^{n+1}\|_{L^{\infty}} 
            + \|\nabla^{2}\e^{n}\|_{L^{4}} \|\Delta\m^{n+1}\|_{L^{4}}  + \|\Delta\e^{n}\|_{L^{4}} \|\nabla^{2}\m^{n+1}\|_{L^{4}}\Bigr) 
            \|\Delta^{2}\e^{n+1}\|_{L^{2}} \\
        & + \Bigl(2\|\nabla\e^{n}\|_{L^{\infty}} \|\nabla\Delta\m^{n+1}\|_{L^{2}} 
            + \|\Delta\e^{n}\|_{L^{4}} \|\Delta\m^{n+1}\|_{L^{4}}\Bigr) 
            \|\Delta^{2}\e^{n+1}\|_{L^{2}}  + \|\e^{n}\|_{L^{\infty}} \|\Delta^{2}\m^{n+1}\|_{L^{2}} 
            \|\Delta^{2}\e^{n+1}\|_{L^{2}} \\
    \leq{} &\epsilon^{-1}\tau^{2} + \epsilon^{-1}\|\nabla\Delta\e^{n}\|_{L^{2}}^{2} 
            + \epsilon\|\Delta^{2}\e^{n+1}\|_{L^{2}}^{2},
    \end{aligned}
\end{equation}

The Sobolev embedding $H^{2} \hookrightarrow L^{\infty}$ (valid in dimensions $d \le 3$) gives $|| \e^{n}||_{L^{\infty}} \le C || \e^{n}||_{H^2}$. 
Together with Theorem \ref{temporal_1} and Lemma \ref{lemma_tau1} we arrive at
\begin{equation*}
    || \e^{n}||_{H^2} \le C (|| \e^{n}||_{L^2} + ||\Delta \e^{n}||_{L^2}) \le C \tau.
\end{equation*}
\noindent \emph{Estimate of $J_3$}: Finally, the third term $J_3$ splits into six parts:
\begin{equation}
    \begin{aligned}
        J_3 &= \alpha \bigl(\Delta (|\nabla \M^{n}|^2 \M^{n+1} - |\nabla \m^{n}|^2 \m^{n+1}), -\Delta^2 \e^{n+1}\bigr) \\
        &= J_{31} + J_{32} + J_{33} + J_{34} + J_{35} + J_{36},
    \end{aligned}
\end{equation}
where
\begin{align}
    J_{31} &= \bigl(2\nabla \Delta \M^n \nabla \M^n \e^{n+1} + \nabla^2 \M^n \Delta \M^n \e^{n+1} 
          + \nabla^2 \M^n \nabla \M^n \nabla \e^{n+1},\; \Delta^2 \e^{n+1}\bigr), \nonumber \\
    J_{32} &= \bigl(2\nabla \M^n \Delta \M^n \nabla \e^{n+1} + |\nabla \M^n|^2 \Delta \e^{n+1},\; \Delta^2 \e^{n+1}\bigr), \nonumber \\
    J_{33} &= \bigl(\nabla \Delta \e^n \nabla \M^{n} \m^{n+1} + \nabla^2 \e^n \Delta \M^n \m^{n+1} 
          + \nabla^2 \e^n \nabla \M^n \nabla \m^{n+1},\; \Delta^2 \e^{n+1}\bigr), \nonumber \\
    J_{34} &= \bigl(2\nabla \e^n \nabla^2 \e^n \nabla \m^{n+1} + |\nabla \e^n|^2 \Delta \m^{n+1},\; \Delta^2 \e^{n+1}\bigr), \nonumber \\
    J_{35} &= \bigl(\Delta \e^n \nabla^2 \m^n \m^{n+1} + \nabla \e^n \nabla \Delta \m^n \m^{n+1} 
          + \nabla \e^n \nabla^2 \m^n \nabla \m^{n+1},\; \Delta^2 \e^{n+1}\bigr), \nonumber \\
    J_{36} &= \bigl(\Delta \e^n \nabla \m^n \nabla \m^{n+1} + \nabla \e^n \Delta \m^n \nabla \m^{n+1} 
          + \nabla \e^n \nabla \m^n \Delta \m^{n+1},\; \Delta^2 \e^{n+1}\bigr). \nonumber
\end{align}
Now we estimate each term using the bound \eqref{third_Mn} and the Sobolev inequalities
\begin{equation}
    \|\nabla^2 \e^{n}\|_{L^4} \le C \|\Delta \e^{n}\|_{L^4}, \quad
    \|\nabla^2 \e^{n}\|_{L^2} \le C \|\Delta \e^{n}\|_{L^2},
    \label{sobolev_eq}
\end{equation}
which follow from \cite[Proposition 2.8]{katzourakis2019numerical}.

\noindent \emph{Estimate of $J_{31}$:}
\begin{equation}
    \begin{aligned}
        J_{31} &\le \bigl(\|\nabla \Delta \M^n\|_{L^2} \|\nabla \M^n\|_{L^{\infty}} \|\e^{n+1}\|_{L^{\infty}}  + \|\nabla^2 \M^n\|_{L^4} \|\Delta \M^n\|_{L^4} \|\e^{n+1}\|_{L^{\infty}} \\
        &\quad + \|\nabla^2 \M^n\|_{L^4} \|\nabla \M^n\|_{L^{\infty}} \|\nabla \e^{n+1}\|_{L^4}\bigr)
             \|\Delta^2 \e^{n+1}\|_{L^2}.
    \end{aligned}
\end{equation}

\noindent \emph{Estimate of $J_{32}$:}
\begin{equation}
    J_{32} \le \bigl(2\|\nabla \M^n\|_{L^{\infty}} \|\Delta \M^n\|_{L^4} \|\nabla \e^{n+1}\|_{L^4}
          + \|\nabla \M^n\|_{L^{\infty}}^2 \|\Delta \e^{n+1}\|_{L^2}\bigr)
          \|\Delta^2 \e^{n+1}\|_{L^2}.
\end{equation}

\noindent \emph{Estimate of $J_{33}$:}
\begin{equation}
    \begin{aligned}
        J_{33} &\le \bigl(\|\nabla \Delta \M^n\|_{L^2} \|\nabla \e^n\|_{L^{\infty}} \|\m^{n+1}\|_{L^{\infty}}  + \|\nabla^2 \M^n\|_{L^4} \|\Delta \e^n\|_{L^4} \|\m^{n+1}\|_{L^{\infty}} \\
        &\quad + \|\nabla^2 \M^n\|_{L^4} \|\nabla \e^n\|_{L^4} \|\nabla \m^n\|_{L^{\infty}}\bigr)
             \|\Delta^2 \e^{n+1}\|_{L^2}.
    \end{aligned}
\end{equation}

\noindent \emph{Estimate of $J_{34}$:}
\begin{equation}
    \begin{aligned}
        J_{34} &\le \bigl(\|\nabla \Delta \e^n\|_{L^2} \|\nabla \m^{n}\|_{L^{\infty}} \|\m^{n+1}\|_{L^{\infty}}  + \|\nabla^2 \e^n\|_{L^4} \|\Delta \m^n\|_{L^4} \|\m^{n+1}\|_{L^{\infty}} \bigr)
             \|\Delta^2 \e^{n+1}\|_{L^2}.
    \end{aligned}
\end{equation}

\noindent \emph{Estimate of $J_{35}$:}
\begin{equation}
    \begin{aligned}
        J_{35} &\le \bigl(\|\Delta \e^n\|_{L^{4}} \|\nabla^2 \m^n\|_{L^{\infty}} \| \m^{n+1}\|_{L^{\infty}} + \|\nabla \e^n\|_{L^{\infty}} \|\nabla \Delta \m^n\|_{L^2} \|\Delta \m^{n+1}\|_{L^{\infty}} \\
        &\quad + \|\Delta \e^n\|_{L^4} \|\nabla^2 \m^n\|_{L^4} \|\nabla \m^{n}\|_{L^{\infty}}\bigr)
             \|\Delta^2 \e^{n+1}\|_{L^2}.
    \end{aligned}
\end{equation}

\noindent \emph{Estimate of $J_{36}$:}
\begin{equation}
    \begin{aligned}
        J_{36} &\le \bigl( \|\Delta \e^n\|_{L^2} \|\nabla \m^n\|_{L^{\infty}} \|\nabla \m^{n+1}\|_{L^{\infty}} +\|\nabla \e^n\|_{L^4} \|\Delta \m^n\|_{L^4} \|\nabla \m^{n+1}\|_{L^{\infty}} \\
        &\quad + \|\nabla \e^n\|_{L^4} \|\nabla \m^n\|_{L^{\infty}} \|\Delta \m^{n+1}\|_{L^4}\bigr)
             \|\Delta^2 \e^{n+1}\|_{L^2}.
    \end{aligned}
\end{equation}

\noindent \emph{Final estimate:}
Summing all contributions and using the regularity assumptions on $\M^n$, $\m^n$, $\m^{n+1}$ from \eqref{third_Mn}, we obtain
\begin{equation}
    \begin{aligned}
        \sum_{i=1}^{6} J_{3i} &\le \eps^{-1} \tau^2 + \eps^{-1} \|\nabla \Delta \e^{n}\|_{L^2}^2 
                            + \eps \|\Delta^2 \e^{n+1}\|_{L^2}^2.
    \end{aligned}
\end{equation}
Therefore, using the bound \eqref{third_Mn} and the inequalities 
$\|\nabla^2 \e^{n}\|_{L^4} \le C \|\Delta \e^{n}\|_{L^4}$, 
$\|\nabla^2 \e^{n}\|_{L^2} \le C \|\Delta \e^{n}\|_{L^2}$ 
\cite[Proposition 2.8]{katzourakis2019numerical}, each term $J_i$ is estimated similarly.
Summing all contributions yields
\begin{equation}
    J_3 \le \eps^{-1} \tau^2 + \eps^{-1} \|\nabla \Delta \e^{n}\|_{L^2}^2 + \eps \|\Delta^2 \e^{n+1}\|_{L^2}^2 ,
\end{equation}
where the Sobolev embeddings $H^{1}(\Omega) \hookrightarrow L^{4}(\Omega)$, 
$H^{2}(\Omega) \hookrightarrow L^{\infty}(\Omega)$, together with the 
mathematical induction hypothesis, yield
\begin{align*}
    \|\Delta \e^{n}\|_{L^{4}} 
    &\leq C \|\Delta \e^{n}\|_{H^{1}} 
     = C\bigl(\|\Delta \e^{n}\|_{L^{2}} + \|\nabla \Delta \e^{n}\|_{L^{2}}\bigr), \\
    \|\nabla \e^{n}\|_{L^{\infty}} 
    &\leq C \|\nabla \e^{n}\|_{H^{2}} 
     \leq C\bigl(\|\nabla \e^{n}\|_{L^{2}} + \|\nabla \Delta \e^{n}\|_{L^{2}}\bigr), \\
    \|\nabla \e^{n}\|_{L^{4}} 
    &\leq C \|\nabla \e^{n}\|_{H^{1}} 
     \leq C\bigl(\|\nabla \e^{n}\|_{L^{2}} + \|\nabla^{2} \e^{n}\|_{L^{2}}\bigr) \leq C\bigl(\|\nabla \e^{n}\|_{L^{2}} + \|\Delta \e^{n}\|_{L^{2}}\bigr) 
     \leq C \tau.
\end{align*}
Follow the same estimates as in \eqref{mnaDel_m} we obtain 
\begin{equation}
    || \Delta^2 \m^n ||_{L^2} \le T   ||\partial_t  \m(t)||_{L^{\infty}(t_k , t_{k+1};H^4(\Omega))}   + ||\m_0||_{H^4} \le C .
\end{equation}
Combining all the above estimates and selecting a small positive value for $\eps$, we derive
\begin{equation*}
    \frac{1}{2} D_{\tau}||\nabla \Delta \e^{n+1}||_{L^2}^2 +  \frac{\alpha }{2} ||\Delta^2 \e^{n+1}||_{L^2}^2 \le \eps^{-1}  \tau^2  + \eps^{-1} ||\nabla \Delta \e^{n+1} ||_{L^2}^2 + \eps ||\Delta^2 \e^{n+1} ||_{L^2}^2 +  \eps^{-1} || \Delta R_{tr}^{n+1} ||_{L^2}^2. 
\end{equation*}
Applying the discrete version of Gronwall's inequality in Lemma \ref{Gronwall} leads to \eqref{litte_aim2}. This completes the proof of the lemma.
\end{proof}

In order to derive the optimal convergence rate under the framework of LOD, the boundedness of $||\Delta  D_{\tau} \M^{n+1}||_{H^1}$ is needed, and a similar result has been used in \cite[Conclusion 10.7]{henning2022superconvergence}. 
\subsubsection{Lemma \ref{lemma_tau3}}
\label{subsec:Proof of ref{lemma_tau3}.}

This lemma establishes the boundedness of $||\Delta  D_{\tau} \M^{n+1}||_{H^1}$, a result that will be employed in the proof of Theorem \ref{theorem_LODestimate} to derive the optimal convergence rate within the LOD framework.
\begin{lemma}\label{lemma_tau3}
Assume \eqref{assumption} holds, and considering $\M^{n+1} $ as the solution to the elliptic system \eqref{time_discrete_form}. Then, there exists a $\tau$-independent constant $C>0$ such that
    \begin{equation}
        ||\Delta  D_{\tau} \M^{n+1}||_{H^1} \le C .
    \end{equation}
\end{lemma}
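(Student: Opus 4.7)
The plan is to reduce the estimate to previously-established quantities by exploiting the decomposition $\M^{n+1} = \m^{n+1} - \e^{n+1}$ between the exact magnetization and the temporal error, together with the elementary fact that $D_\tau$ commutes with $\Delta$ (the former acts only in time, the latter only in space). Concretely, I would write
\begin{equation*}
\Delta D_\tau \M^{n+1} \;=\; D_\tau \Delta \M^{n+1} \;=\; D_\tau \Delta \m^{n+1} - D_\tau \Delta \e^{n+1},
\end{equation*}
so that the desired $H^1$ bound reduces to estimating the two pieces on the right-hand side independently.

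For the exact-solution contribution, I would use the integral representation
\begin{equation*}
D_\tau \Delta \m^{n+1} \;=\; \frac{1}{\tau}\int_{t_n}^{t_{n+1}} \partial_t \Delta \m(s)\,\d s
\end{equation*}
combined with Jensen's inequality to reduce the $H^1$ norm to $\sup_{s \in [0,T]} \|\partial_t \Delta \m(s)\|_{H^1}$. This supremum is bounded by $\|\partial_t \m\|_{L^{\infty}(0,T;H^3(\Omega))}$, which is finite by the regularity assumption \eqref{ass:regularity_estimate} through the embedding $H^4 \hookrightarrow H^3$, yielding a constant independent of $\tau$ and $n$.

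For the error contribution, the triangle inequality gives
\begin{equation*}
\|D_\tau \Delta \e^{n+1}\|_{H^1} \;\le\; \frac{1}{\tau}\bigl(\|\Delta \e^{n+1}\|_{H^1} + \|\Delta \e^n\|_{H^1}\bigr),
\end{equation*}
and the key observation is that Lemmas \ref{lemma_tau1} and \ref{lemma_tau2} together provide the uniform bound $\|\Delta \e^k\|_{L^2} + \|\nabla \Delta \e^k\|_{L^2} \le C \tau$, so that $\|\Delta \e^k\|_{H^1} \le C \tau$. The explicit linear $\tau$-scaling of these higher-order error estimates cancels the $\tau^{-1}$ factor introduced by $D_\tau$, producing a $\tau$-independent bound. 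The initial case $\e^0 = \0$ requires no separate treatment, since the corresponding term simply vanishes in the triangle inequality.

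I do not anticipate any serious obstacle: the argument is essentially a clean triangle inequality once one recognises that the required linear $\tau$-scaling of $\|\Delta \e^k\|_{H^1}$ is supplied exactly by the higher-regularity temporal estimates established in Lemmas \ref{lemma_tau1} and \ref{lemma_tau2}. In fact, this is precisely the role those lemmas play in the broader analytical strategy: to promote the basic temporal error bound into a form strong enough that $D_\tau$ applied to $\Delta \M^{n+1}$ inherits the uniform regularity of the exact solution, as required by the LOD framework in the proof of Theorem \ref{theorem_LODestimate}.
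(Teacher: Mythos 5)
Your proposal is correct and follows essentially the same route as the paper's proof: both split $\Delta D_\tau \M^{n+1}$ via the decomposition $\M^{n+1} = \m(t_{n+1}) \pm \e^{n+1}$, control the exact-solution piece by writing $D_\tau \Delta \m^{n+1}$ as $\tau^{-1}\int_{t_n}^{t_{n+1}}\partial_t\Delta\m(s)\,\d s$ and invoking $\|\partial_t\m\|_{L^\infty(0,T;H^3)}\le C_{\mathrm{reg}}$, and control the error piece by the triangle inequality together with the uniform bounds $\|\Delta\e^k\|_{L^2}\le C\tau$ and $\|\nabla\Delta\e^k\|_{L^2}\le C\tau$ from Lemmas~\ref{lemma_tau1} and~\ref{lemma_tau2}. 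The only cosmetic difference is that you phrase the integral estimate as ``Jensen's inequality'' where the paper simply uses the triangle inequality for Bochner integrals; the substance is identical.
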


\begin{proof}
Recall that $\e^{n} := \m(t_{n}) - \M^{n}$. By inserting the intermediate terms $\Delta\m(t_{n+1})$ and $\Delta\m(t_{n})$ and using the definition of the $H^1$ norm of $\Delta D_{\tau} \M^{n+1}$, we obtain
\begin{equation}
    \begin{aligned}
        \|\Delta D_{\tau} \M^{n+1}\|_{H^{1}} 
        &= \|\Delta D_{\tau} \M^{n+1}\|_{L^{2}} + \|\nabla(\Delta D_{\tau} \M^{n+1})\|_{L^{2}} \\
        &= \tau^{-1} \|\Delta\M^{n+1} - \Delta\m(t_{n+1}) + \Delta\m(t_{n+1}) - \Delta\m(t_{n}) + \Delta\m(t_{n}) - \Delta\M^{n}\|_{L^{2}} \\
        &\quad + \|D_{\tau}(\nabla\Delta\M^{n+1})\|_{L^{2}} \\
        &= \tau^{-1} \Bigl( \|{-}\Delta\e^{n+1} + \Delta\m(t_{n+1}) - \Delta\m(t_{n}) + \Delta\e^{n}\|_{L^{2}} \\
        &\quad + \|{-}\nabla\Delta\e^{n+1} + \nabla\Delta\m(t_{n+1}) - \nabla\Delta\m(t_{n}) + \nabla\Delta\e^{n}\|_{L^{2}} \Bigr).
    \end{aligned}
\end{equation}
Therefore, using Lemma \ref{lemma_tau1} and Lemma \ref{lemma_tau2}, we can bound $||D_{\tau} \Delta \e^{n+1} ||_{L^2}$ and $||D_{\tau} \nabla \Delta \e^{n+1} ||_{L^2}$, respectively. Furthermore, based on the regularity assumption in \eqref{assumption}, we derive
\begin{equation*}
    \tau^{-1}\|\nabla\Delta\m(t_{n+1}) - \nabla\Delta\m(t_{n})\|_{L^{2}}
    = \tau^{-1}\left\|\int_{t_{n}}^{t_{n+1}} \partial_{t}\nabla\Delta\m(t) \, dt\right\|_{L^{2}}
    \le \sup_{t} \|\partial_{t}\m\|_{H^{3}(\Omega)} \le C,
\end{equation*}
and similarly for $\tau^{-1}\|\Delta\m(t_{n+1}) - \Delta\m(t_{n})\|_{L^{2}}$.
\end{proof}

\subsection{Proof of Theorem \ref{theorem_LODestimate}. }
\label{subsec:Proof of Theorem ref{theorem_LODestimate}.}

This theorem estimates the difference between the LOD projection $A_{\LOD}(\M^n)$ and the discrete homogenized solution $\M_{\LOD}^{n}$. We proceed to prove \eqref{LODerror} by mathematical induction.

\begin{proof}
For the case $n = 0$, recalling that $\M^{0} = \m_{0} = \m(t_{0})$, we apply the triangle inequality to achieve 
 \begin{equation*}
 \begin{aligned}
     ||\e_{\LOD}^{0}||_{H^1} &= ||A_{\LOD}(\M^0) -\M_{\LOD}^{0}||_{H^1} \\
     &\le ||A_{\LOD}(\M^0) -\M^0||_{H^1} + ||\M^0 -\M_{\LOD}^{0}||_{H^1} \\
     &\le ||A_{\LOD}(\m_0) -\m_0||_{H^1} + ||\m_0 -\M_{\LOD}^{0}||_{H^1} \\
     \text{( C\'ea's lemma)} &\le ||A_{\LOD}(\m_0) -\m_0||_{H^1} + C ||\m_0 -A_{\LOD}(\m_0)||_{H^1} \\
     &\le CH^2||\f^0||_{H^1},
 \end{aligned}    
 \end{equation*}
 and 
 \begin{equation*}
 \begin{aligned}
     ||\e_{\LOD}^{0}||_{L^2} &\le ||A_{\LOD}(\m_0) -\m_0||_{L^2} + ||\m_0 -\M_{\LOD}^{0}||_{L^2}  \\
     &= ||A_{\LOD}(\m_0) -\m_0||_{L^2} + ||\m_0 -\M_{\LOD}^{0} - P_H (\m_0 -\M_{\LOD}^{0})||_{L^2} \\
     &\le CH^3||\f^0||_{H^1} + CH ||\m_0 -\M_{\LOD}^{0}||_{H^1}\\
     \text{( C\'ea's lemma)}&\le CH^3||\f^0||_{H^1} + CH ||\m_0 -A_{\LOD}(\m_0)||_{H^1} \\
     &\le CH^3||\f^0||_{H^1},
 \end{aligned}    
 \end{equation*}
 with $\f^0:=-\alpha \Delta \m_0+ \nabla \cdot (\m_0 \times \nabla \m_0) + \m_0$ and from the assumption $\m_0 \in \H^4(\Omega)$ as stated in \eqref{assumption} it follows that $\f^0 \in \H^1(\Omega).$ Next, we want to prove \eqref{LODerror} by mathematical induction. First, let us assume that \eqref{LODerror} is valid for $k = 1,\cdots, n$, i.e.
\begin{equation}\label{spaceError}
    || \e_{\LOD}^k||_{L^2} + H || \e_{\LOD}^k||_{H^1}  \le C_0 H^3 .
\end{equation}
Then, we will prove that \eqref{LODerror} holds for $k = 1,\cdots, n+1$.
Since 
\begin{equation}
	\begin{aligned}
		(D_{\tau} \M^{n+1},\v) + \alpha(  \nabla \M^{n+1},\nabla \v)  - ( \M^{n} \times \nabla  \M^{n+1} , \nabla \v) 
		=\alpha ( (\nabla  \M^{n} \cdot \nabla \M^{n} )\M^{n+1}, \v) ,
	\end{aligned}
\end{equation}
for all $\v \in \V_{\LOD}$. Recalling that $B^n(A_{\LOD}(u),v) = B^n(u,v)$ for $v \in V_{\LOD}$, we arrive at
\begin{equation}\label{438}
	\begin{aligned}
		&(D_{\tau} \M^{n+1},\v) + \alpha(  \nabla A_{\LOD}(\M^{n+1}),\nabla \v)  - ( \M^{n} \times \nabla  A_{\LOD}(\M^{n+1} ), \nabla \v) + (  A_{\LOD}(\M^{n+1}),\v) \\
		=&\alpha ( (\nabla  \M^{n} \cdot \nabla \M^{n} )\M^{n+1}, \v) + ( \M^{n+1},\v) .
	\end{aligned}
\end{equation}

Subtracting equation \eqref{438} from \eqref{436}, we obtain the spatial error equation
\begin{equation}
    \begin{aligned}
        &(D_{\tau} \e_{\LOD}^{n+1},\v) + \alpha(  \nabla \e_{\LOD}^{n+1},\nabla \v)  
        - ( \M^{n} \times \nabla A_{\LOD}( \M^{n+1} ) - \M_{\LOD}^{n} \times \nabla \M_{\LOD}^{n+1}, \nabla \v) \\
        =&\alpha \big( (\nabla  \M^{n} \cdot \nabla \M^{n} )\M^{n+1} 
          - (\nabla \M_{\LOD}^{n} \cdot \nabla \M_{\LOD}^{n} )\M_{\LOD}^{n+1}, \v \big) \\
        &- \big( D_{\tau} (\M^{n+1} - A_{\LOD} (\M^{n+1}) ),\v \big) 
          + \big( \M^{n+1} - A_{\LOD} (\M^{n+1}) ,\v \big),
    \end{aligned}
\end{equation}
for all $\v \in V_{\LOD}$. Then, testing with $\v = \e_{\LOD}^{n+1}$, we obtain
\begin{equation}
	\begin{aligned}
		&\frac{1}{2}D_{\tau}( ||\e_{\LOD}^{n+1}||_{L^2}^2 ) + \alpha ||\nabla \e_{\LOD}^{n+1}||_{L^2}^2  \\
		=& \underbrace{( \M^{n} \times \nabla A_{\LOD}( \M^{n+1} ) - \M_{\LOD}^{n} \times \nabla \M_{\LOD}^{n+1}, \nabla \e_{\LOD}^{n+1}) }_{:=S_1} + \underbrace{\alpha ( |\nabla  \M^{n}|^2 \M^{n+1} - |\nabla \M_{\LOD}^{n}|^2 \M_{\LOD}^{n+1}, \e_{\LOD}^{n+1})}_{:=S_2}  \\
         &- \underbrace{(D_{\tau} (\M^{n+1} - A_{\LOD} (\M^{n+1}) ),\e_{\LOD}^{n+1})}_{:=S_3} + \underbrace{(\M^{n+1} - A_{\LOD} (\M^{n+1}) ,\e_{\LOD}^{n+1})}_{:=S_4}. \\ 
    \end{aligned}
\end{equation}
\noindent \emph{Estimate of $S_{1}$:} We split it into 
\begin{equation*}
	\begin{aligned}
		S_1 =& ( \M^{n} \times \nabla A_{\LOD}( \M^{n+1} ) - \M_{\LOD}^{n} \times \nabla \M_{\LOD}^{n+1}, \nabla \e_{\LOD}^{n+1}) \\
          =& \underbrace{( (\M^{n}- \M_{\LOD}^{n}) \times \nabla (A_{\LOD}( \M^{n+1} ) -\M^{n+1} ) , \nabla \e_{\LOD}^{n+1}) }_{:=S_{11}} \\
          &+ \underbrace{((\M^{n}- \M_{\LOD}^{n}) \times \nabla \M^{n+1}, \nabla \e_{\LOD}^{n+1}) }_{:=S_{12}} 
           + \underbrace{( \M_{\LOD}^{n} \times \nabla \e_{\LOD}^{n+1}, \nabla \e_{\LOD}^{n+1}) }_{=0} .
          \end{aligned}
\end{equation*}
Applying H\"{o}lder's inequality and the Sobolev embedding $H^{1}(\Omega) \hookrightarrow L^{4}(\Omega)$, 
the term $S_{11}$ can be bounded as follows
\begin{equation*}
    \begin{aligned}
        S_{11} &\leq \|\M^{n} - A_{\LOD}(\M^{n}) + A_{\LOD}(\M^{n}) - \M_{\LOD}^{n}\|_{L^{4}} \\
        &\qquad \times \|\nabla(A_{\LOD}(\M^{n+1}) - \M^{n+1})\|_{L^{2}} \|\nabla\e_{\LOD}^{n+1}\|_{L^{4}} \\
        &\leq C H^{2} \bigl(\|\M^{n} - A_{\LOD}(\M^{n})\|_{L^{4}} + \|\e_{\LOD}^{n}\|_{L^{4}}\bigr) \|\nabla\e_{\LOD}^{n+1}\|_{H^{1}} \\
        &\leq C H^{2} \bigl(\|\M^{n} - A_{\LOD}(\M^{n})\|_{H^{1}} + \|\e_{\LOD}^{n}\|_{H^{1}}\bigr) \|\nabla\e_{\LOD}^{n+1}\|_{H^{1}},
    \end{aligned}
\end{equation*}
where we used Theorem $\ref{theorem_Projectionestimate}$ to derive $||\M^{n} -A_{\LOD}( \M^{n} ) ||_{H^1} \le CH^2$ and $||\nabla (A_{\LOD}( \M^{n+1} ) - \M^{n+1}) ||_{L^2} \le CH^2 .$ Then, from inverse estimate in Lemma \ref{LOD_inverse_estimate} it follows that $||\nabla \e_{\LOD}^{n+1}||_{H^1} \le C H^{-1} ||\nabla \e_{\LOD}^{n+1}||_{L^2}$ and $||\e_{\LOD}^{n}||_{H^1} \le C H^{-1} ||\e_{\LOD}^{n}||_{L^2} $. Together with Young's inequality, we arrive at

\begin{equation*}
    S_{11} \le C H^3 \|\nabla \e_{\LOD}^{n+1}\|_{L^2} 
         + C \|\e_{\LOD}^{n}\|_{L^2} \|\nabla \e_{\LOD}^{n+1}\|_{L^2}
         \le C \epsilon^{-1} H^6 + \epsilon \|\nabla \e_{\LOD}^{n+1}\|_{L^2}^2 
         + C \epsilon^{-1} \|\e_{\LOD}^{n}\|_{L^2}^2.
\end{equation*}
For $S_{12}$ we again use H\"{o}lder's inequality together with estimate $||\M^{n} -A_{\LOD}( \M^{n} ) ||_{L^2} \le CH^3$ (Theorem $\ref{theorem_Projectionestimate}$) to conclude
\begin{equation*}
    \begin{aligned}
        S_{12} \leq{} & \|\M^{n} - A_{\LOD}(\M^{n}) + A_{\LOD}(\M^{n}) - \M_{\LOD}^{n}\|_{L^{2}} \|\nabla\e_{\LOD}^{n+1}\|_{L^{2}} \|\nabla\M^{n+1}\|_{L^{\infty}} \\
        \leq{} & \bigl(\|\M^{n} - A_{\LOD}(\M^{n})\|_{L^{2}} + \|\e_{\LOD}^{n}\|_{L^{2}}\bigr) \|\nabla\e_{\LOD}^{n+1}\|_{L^{2}} \\
        \leq{} & C H^{3} \|\nabla\e_{\LOD}^{n+1}\|_{L^{2}} 
                + C \|\e_{\LOD}^{n}\|_{L^{2}} \|\nabla\e_{\LOD}^{n+1}\|_{L^{2}} . 
    \end{aligned}
\end{equation*}
Consequently, using Young's inequality gives $S_1 \le C \eps^{-1} H^6 +  \eps ||\nabla \e_{\LOD}^{n+1} ||_{L^2}^2 + C \eps^{-1} ||\e_{\LOD}^{n} ||_{L^2}^2.$

\noindent \emph{Estimate of $S_2$:} $S_2$ can be rewritten as
\begin{equation}
    \begin{aligned}
        S_2 
        &= \alpha \big( |\nabla \M^{n}|^2 \M^{n+1} - |\nabla \M_{\LOD}^{n}|^2 \M_{\LOD}^{n+1}, \e_{\LOD}^{n+1} \big) \\ 
        &= \alpha \underbrace{\big( |\nabla \M_{\LOD}^{n}|^2 ( A_{\LOD}(\M^{n+1}) - \M_{\LOD}^{n+1} ), \e_{\LOD}^{n+1} \big)}_{S_{21}}  - \alpha \underbrace{\big( (|\nabla \M_{\LOD}^{n}|^2 - |\nabla \M^{n}|^2) A_{\LOD}(\M^{n+1}), \e_{\LOD}^{n+1} \big)}_{S_{22}} \\ 
        &\quad - \alpha \underbrace{\big( |\nabla \M^{n}|^2 (A_{\LOD}(\M^{n+1}) - \M^{n+1}), \e_{\LOD}^{n+1} \big)}_{S_{23}} ,
    \end{aligned}
\end{equation}

For terms $S_{21}$ and $S_{23}$, we use H\"{o}lder's inequality once again to obtain
\begin{equation}
    S_{21} \le \alpha \|\nabla \M_{\LOD}^{n}\|_{L^{\infty}}^2 \|\e_{\LOD}^{n+1}\|_{L^2}^2,
\end{equation}
\begin{equation}
    S_{23} \le \alpha \| \nabla \M^{n} \|_{L^{\infty}}^2 
           \|A_{\LOD}(\M^{n+1}) - \M^{n+1}\|_{L^2} \|\e_{\LOD}^{n+1}\|_{L^2}.
\end{equation}
Then, using Agmon's inequality and inverse estimate, we conclude that $||\nabla \M_{\LOD}^{n}||_{L^{\infty}} \le C$. Recalling that $\e_{\LOD}^{n} = A_{\LOD}(\M^n) - \M_{\LOD}^{n}$ and requiring $C_0 H^{\frac{1}{2}} \le C$, we apply the triangle inequality to obtain
\begin{equation}
\begin{aligned}
    ||\nabla \M_{\LOD}^{n}||_{L^{\infty}} &\le ||\nabla \e_{\LOD}^{n}||_{L^{\infty}} + ||\nabla A_{\LOD}(\M^n)||_{L^{\infty}} \\
    \text{(Agmon's~inequality)}&\le C ||\nabla \e_{\LOD}^{n}||_{H^1}^{\frac{1}{2}} ||\nabla \e_{\LOD}^{n}||_{H^2}^{\frac{1}{2}} + || \M^n||_{W^{1,\infty}} \\
    \text{(Inverse~estimate)}&\le H^{-\frac{3}{2}}||\nabla \e_{\LOD}^{n}||_{L^2} + || \M^n||_{W^{2,4}} \\
    \text{(Induction~hypothesis)}&\le C_0 H^{\frac{1}{2}} + || \M^n||_{W^{2,4}} \\
    &\le C . \\
\end{aligned}
\end{equation}
$S_{21}$ can be further bounded by $S_{21} \le C \|\e_{\LOD}^{n+1}\|_{L^2}^2.$ Besides, from Theorem \ref{theorem_Projectionestimate}, we have $\|A_{\LOD}(\M^{n+1}) - \M^{n+1}\|_{L^2} \le C H^3$ and from $H^2$ stability of $A_{\LOD}$ in Lemma \ref{LOD_inverse_estimate} it follows that $||A_{\LOD} (\M^n) ||_{L^{\infty}} \le ||A_{\LOD} (\M^n) ||_{H^{2}} \le C ||\M^n ||_{H^{2}}$. $S_{23}$ can be further bounded by $S_{23} \le C H^3 \|\e_{\LOD}^{n+1}\|_{L^2}.$

Noting the inequality that $\lvert \mathbf{a} +\mathbf{b} \rvert^2 \le 2\lvert \mathbf{a} \rvert^2 + 2\lvert \mathbf{b} \rvert^2$, 
and denoting $\mathbf{a} = \nabla \M_{\LOD}^{n} -\nabla A_{\LOD}(\M^{n})$, 
$\mathbf{b} = \nabla \bigl( A_{\LOD}(\M^{n}) - \M^{n} \bigr)$, 
we have
\begin{equation}
    \begin{aligned}
        S_{22} =& \Bigl( \bigl( \lvert \nabla \M_{\LOD}^{n} \rvert^2 - \lvert \nabla \M^{n} \rvert^2 \bigr) A_{\LOD}(\M^{n+1}), \, \e_{\LOD}^{n+1} \Bigr) \\
        =& \Bigl( \lvert \nabla \M_{\LOD}^{n} - \nabla \M^{n} \rvert^2 A_{\LOD}(\M^{n+1}), \, \e_{\LOD}^{n+1} \Bigr) + 2 \Bigl( \nabla \M^{n} \cdot \bigl( \nabla \M_{\LOD}^{n} - \nabla \M^{n} \bigr) A_{\LOD}(\M^{n+1}), \, \e_{\LOD}^{n+1} \Bigr) \\
        \le & 2 \bigl\lvert \bigl( \lvert \nabla \M_{\LOD}^{n} - \nabla A_{\LOD}(\M^{n}) \rvert^2 A_{\LOD}(\M^{n+1}), \, \e_{\LOD}^{n+1} \bigr) \bigr\rvert + 2 \bigl\lvert \bigl( \lvert \nabla \bigl( A_{\LOD}(\M^{n}) - \M^{n} \bigr) \rvert^2 A_{\LOD}(\M^{n+1}), \, \e_{\LOD}^{n+1} \bigr) \bigr\rvert \\
        &+ 2 \bigl\lvert \bigl( \nabla \M^{n} \cdot \nabla (\M_{\LOD}^{n} - A_{\LOD}(\M^{n}) ) A_{\LOD}(\M^{n+1}), \, \e_{\LOD}^{n+1} \bigr) \bigr\rvert \\
        &+ 2 \bigl\lvert \bigl( \nabla \M^{n} \cdot \nabla (A_{\LOD}(\M^{n}) -\M^{n}) A_{\LOD}(\M^{n+1}), \, \e_{\LOD}^{n+1} \bigr) \bigr\rvert.
    \end{aligned}
\end{equation}

Moreover, given that $\e_{\LOD}^{n}=A_{\LOD}(\M^n)-\M_{\LOD}^{n}$ 
and the Neumann boundary condition $\nabla \mathbf{M}^n \cdot \mathbf{n} = \mathbf{0}$ on $\partial \Omega$, an application of integration by parts to the final term of the above equation yields
\begin{equation*}
    \begin{aligned}
        &\bigl( \nabla (A_{\LOD}(\mathbf{M}^{n}) - \mathbf{M}^n), 
        (A_{\LOD}(\mathbf{M}^{n+1}) \cdot \mathbf{e}_{\LOD}^{n+1}) \nabla \mathbf{M}^{n} \bigr) \\
        =& -\bigl( (A_{\LOD}(\mathbf{M}^{n}) - \mathbf{M}^n), 
        \nabla \cdot \bigl( (A_{\LOD}(\mathbf{M}^{n+1}) \cdot \mathbf{e}_{\LOD}^{n+1}) \nabla \mathbf{M}^{n} \bigr) \bigr) + \underbrace{\int_{\partial \Omega} \bigl( (A_{\LOD}(\mathbf{M}^{n+1}) \cdot \mathbf{e}_{\LOD}^{n+1}) 
        \nabla \mathbf{M}^{n} \cdot \mathbf{n} \bigr) 
        (A_{\LOD}(\mathbf{M}^{n}) - \mathbf{M}^n) \, \d \S}_{=0}.
    \end{aligned}
\end{equation*}

By applying H\"{o}lder's inequality to $S_{22}$, we obtain
\begin{equation}
    \begin{aligned}
        & \bigl( (|\nabla\M_{\LOD}^{n}|^2 - |\nabla \mathbf{M}^{n}|^2) A_{\LOD}(\mathbf{M}^{n+1}), \mathbf{e}_{\LOD}^{n+1} \bigr) \\
        \le & 2\|\nabla \mathbf{e}_{\LOD}^{n}\|_{L^2} \|\nabla \mathbf{e}_{\LOD}^{n}\|_{L^4} \|A_{\LOD}(\mathbf{M}^{n+1})\|_{L^{\infty}} \|\mathbf{e}_{\LOD}^{n+1}\|_{L^4}  + 2\|\nabla (A_{\LOD}(\mathbf{M}^{n}) - \mathbf{M}^{n})\|_{L^2}^2 \|A_{\LOD}(\mathbf{M}^{n+1})\|_{L^{\infty}} \|\mathbf{e}_{\LOD}^{n+1}\|_{L^{\infty}} \\
        & + \|\nabla \mathbf{M}^{n}\|_{L^{\infty}} \|\nabla \mathbf{e}_{\LOD}^{n}\|_{L^2} \|A_{\LOD}(\mathbf{M}^{n+1})\|_{L^{\infty}} \|\mathbf{e}_{\LOD}^{n+1}\|_{L^2} + \|\Delta \mathbf{M}^{n}\|_{L^4} \|A_{\LOD}(\mathbf{M}^{n}) - \mathbf{M}^{n}\|_{L^2} \|A_{\LOD}(\mathbf{M}^{n+1})\|_{L^{\infty}} \|\mathbf{e}_{\LOD}^{n+1}\|_{L^4} \\
        & + \|\nabla \mathbf{M}^{n}\|_{L^{\infty}} \|\nabla A_{\LOD}(\mathbf{M}^{n+1})\|_{L^4} \|A_{\LOD}(\mathbf{M}^{n}) - \mathbf{M}^{n}\|_{L^2} \|\mathbf{e}_{\LOD}^{n+1}\|_{L^4} \\
        & + \|\nabla \mathbf{M}^{n}\|_{L^{\infty}} \|\nabla \mathbf{e}_{\LOD}^{n+1}\|_{L^2} \|A_{\LOD}(\mathbf{M}^{n+1})\|_{L^{\infty}} \|A_{\LOD}(\mathbf{M}^{n}) - \mathbf{M}^{n}\|_{L^2}.
    \end{aligned}
\end{equation}

Next, applying Lemma \ref{LOD_inverse_estimate} yields the uniform bound
\begin{equation*}
    \|A_{\LOD}(\mathbf{M}^{n+1})\|_{L^{\infty}} \le \|A_{\LOD}(\mathbf{M}^{n+1})\|_{H^2} 
    \le \|\mathbf{M}^{n+1}\|_{H^2} \le C.
\end{equation*}
Combining this estimate with Young's inequality, we further obtain
\begin{equation}
    \begin{aligned}
    S_{22}\le & \|\nabla \mathbf{e}_{\LOD}^{n}\|_{L^2} \|\nabla \mathbf{e}_{\LOD}^{n}\|_{H^1} \|\mathbf{e}_{\LOD}^{n+1}\|_{H^1} 
        + CH^4 \|\mathbf{e}_{\LOD}^{n+1}\|_{H^2}  + \|\nabla \mathbf{M}^{n}\|_{L^{\infty}} \|\nabla \mathbf{e}_{\LOD}^{n}\|_{L^2} 
        \|A_{\LOD}(\mathbf{M}^{n+1})\|_{L^{\infty}} \|\mathbf{e}_{\LOD}^{n+1}\|_{L^2} \\
        & + \|\Delta \mathbf{M}^{n}\|_{L^4} \|A_{\LOD}(\mathbf{M}^{n}) - \mathbf{M}^{n}\|_{L^2} 
        \|A_{\LOD}(\mathbf{M}^{n+1})\|_{L^{\infty}} \|\mathbf{e}_{\LOD}^{n+1}\|_{L^4} \\
        & + \|\nabla \mathbf{M}^{n}\|_{L^{\infty}} \|\nabla A_{\LOD}(\mathbf{M}^{n+1})\|_{L^4} 
        \|A_{\LOD}(\mathbf{M}^{n}) - \mathbf{M}^{n}\|_{L^2} \|\mathbf{e}_{\LOD}^{n+1}\|_{L^4} \\
        & + \|\nabla \mathbf{M}^{n}\|_{L^{\infty}} \|\nabla \mathbf{e}_{\LOD}^{n+1}\|_{L^2} 
        \|A_{\LOD}(\mathbf{M}^{n+1})\|_{L^{\infty}} \|A_{\LOD}(\mathbf{M}^{n}) - \mathbf{M}^{n}\|_{L^2} \\
        \le & \epsilon \|\nabla \mathbf{e}_{\LOD}^{n}\|_{L^2}^2 + \epsilon^{-1} \|\mathbf{e}_{\LOD}^{n+1}\|_{L^2}^2 
        + \epsilon^{-1} H^6 + \epsilon \|\mathbf{e}_{\LOD}^{n+1}\|_{H^1}^2 + \epsilon \|\nabla \mathbf{e}_{\LOD}^{n+1}\|_{L^2}^2,
    \end{aligned}
\end{equation}
where we have utilized the Sobolev embedding $H^{1} \hookrightarrow L^{4}$.

On one side, with mathematical induction assumption we have $||\nabla  \e_{\LOD}^{n}||_{L^2} \le C_0 H^2$ together with inverse estimate in Lemma \ref{LOD_inverse_estimate} yields 
\begin{equation*}
     ||\nabla \e_{\LOD}^{n}||_{H^1} 
     \le C H^{-1}  ||\nabla \e_{\LOD}^{n}||_{L^2} \le C_0 H \le \eps,
\end{equation*}
\begin{equation*}
     ||\nabla \e_{\LOD}^{n}||_{H^2} 
     \le C H^{-1}  ||\nabla \e_{\LOD}^{n}||_{H^1} ,
\end{equation*}
combined with the Sobolev embedding, we obtain
\begin{equation*}
     ||\e_{\LOD}^{n+1}||_{L^{\infty}} \le C ||\e_{\LOD}^{n+1}||_{H^2} \le C H^{-1} ||\e_{\LOD}^{n+1}||_{H^1} .
\end{equation*}
Hence, by Young's equality, we conclude that
\begin{equation*}
    S_2 \le C ||\e_{\LOD}^{n+1}||_{L^2}^2  + C \eps^{-1} H^6.
\end{equation*}
\noindent \emph{Estimate of $S_3$:} Applying H\"{o}lder's inequality and equation \eqref{another_elliptic_H3} in Theorem \ref{theorem_Projectionestimate} yields 

\begin{equation}
    \begin{aligned}
        S_3 &\leq \|D_{\tau}(\M^{n+1} - A_{\LOD}(\M^{n+1}))\|_{L^{2}} \|\e_{\LOD}^{n+1}\|_{L^{2}}
                 = \|D_{\tau}\M^{n+1} - A_{\LOD}(D_{\tau}\M^{n+1})\|_{L^{2}} \|\e_{\LOD}^{n+1}\|_{L^{2}} \\
               &\leq C H^{3} \|{-}\Delta D_{\tau}\M^{n+1} + \M^{n} \times \Delta D_{\tau}\M^{n+1}
                 + \nabla\M^{n} \times \nabla D_{\tau}\M^{n+1} + D_{\tau}\M^{n+1}\|_{H^{1}} \|\e_{\LOD}^{n+1}\|_{L^{2}}.
    \end{aligned}
\end{equation}
Since Lemma \ref{lemma52} and Lemma \ref{lemma_tau3} give that $|| D_{\tau} \M^{n+1}||_{H^1}, ||\Delta  D_{\tau} \M^{n+1}||_{H^1}\le C .$
Next, we estimate $\|\M^{n} \times \Delta D_{\tau}\M^{n+1} + \nabla \M^{n} \times \nabla D_{\tau}\M^{n+1}\|_{H^{1}}$.
Using the definition of the $H^{1}$-norm and Leibniz's rule, we obtain
\begin{equation}
    \begin{aligned}
        &\|\M^{n} \times \Delta D_{\tau}\M^{n+1} + \nabla \M^{n} \times \nabla D_{\tau}\M^{n+1}\|_{H^{1}} \\
        \leq{} &\|\M^{n} \times \Delta D_{\tau}\M^{n+1}\|_{L^{2}} 
                + \|\nabla(\M^{n} \times \Delta D_{\tau}\M^{n+1})\|_{L^{2}}  + \|\nabla \M^{n} \times \nabla D_{\tau}\M^{n+1}\|_{L^{2}} 
                + \|\nabla(\nabla \M^{n} \times \nabla D_{\tau}\M^{n+1})\|_{L^{2}} \\
        \leq{} &\|\nabla D_{\tau}\M^{n+1}\|_{L^{2}} 
                + \|\nabla^{2}\M^{n} \times \nabla D_{\tau}\M^{n+1}\|_{L^{2}}  + \|\nabla \M^{n} \times \nabla^{2} D_{\tau}\M^{n+1}\|_{L^{2}}  + \|\Delta D_{\tau}\M^{n+1}\|_{L^{2}} 
                + \|\nabla \M^{n} \times \Delta D_{\tau}\M^{n+1}\|_{L^{2}} \\
        &\quad + \|\nabla \M^{n} \times \nabla \Delta D_{\tau}\M^{n+1}\|_{L^{2}} \\
        \leq{} &\|\nabla D_{\tau}\M^{n+1}\|_{L^{2}} 
                + \|\nabla D_{\tau}\M^{n+1}\|_{L^{4}}  + \|\Delta D_{\tau}\M^{n+1}\|_{L^{2}} 
                + \|\nabla \Delta D_{\tau}\M^{n+1}\|_{L^{2}} \\
        \leq{} &C \|D_{\tau}\M^{n+1}\|_{H^{1}} 
                + C \|\Delta D_{\tau}\M^{n+1}\|_{H^{1}},
    \end{aligned}
\end{equation}
where we use the Sobolev embedding $H^{1}(\Omega) \hookrightarrow L^{4}(\Omega)$ to obtain
\[
\|\nabla D_{\tau}\M^{n+1}\|_{L^{4}} \leq C \|\nabla D_{\tau}\M^{n+1}\|_{H^{1}}.
\] Together with the definition of the $H^{1}$-norm of $\nabla D_{\tau}\M^{n+1}$, we conclude that
\begin{equation*}
    \|\nabla D_{\tau}\M^{n+1}\|_{H^{1}} 
    \leq \|\nabla D_{\tau}\M^{n+1}\|_{L^{2}} 
    + \|\nabla^{2} D_{\tau}\M^{n+1}\|_{L^{2}} 
    \leq \|D_{\tau}\M^{n+1}\|_{H^{1}} 
    + \|\Delta D_{\tau}\M^{n+1}\|_{L^{2}}.
\end{equation*}
Hence, by Young's equality, we deduce
\begin{equation*}
    S_3 \le  C \eps^{-1} H^6 + \eps || \e_{\LOD}^{n+1} ||_{L^2}^2.
\end{equation*}
\noindent \emph{Estimate of $S_4$:} Using Young's inequality and Theorem \ref{theorem_Projectionestimate}, we derive
\begin{equation*}
    S_4 = (\M^{n+1} - A_{\LOD}(\M^{n+1}),\, \e_{\LOD}^{n+1}) 
    \leq C \epsilon^{-1} H^{6} + \epsilon \|\e_{\LOD}^{n+1}\|_{L^{2}}^{2}.
\end{equation*}

By combining $S_1$-$S_4$, and employing discrete Gronwall's inequality, we ultimately obtain
\begin{equation}
\begin{aligned}
    ||\e_{\LOD}^{n+1}||_{L^2}^2 + \tau \sum \limits _{k=0} ^{n} || \nabla \e_{\LOD}^{k+1}||_{L^2}^2 \le C \exp(C_0) H^6.
\end{aligned}  
\end{equation}
Besides, $H^1$ error estimate can be obtained by inverse estimate in Lemma \ref{LOD_inverse_estimate}
\begin{equation*}
    ||\e_{\LOD}^{n+1}||_{H^1} \le C H^{-1} ||\e_{\LOD}^{n+1}||_{L^2}.
\end{equation*}

\end{proof}

\section*{Acknowledgments}
The authors acknowledge Professor Rong An, Professor Chengjie Liu, and Doctor Panchi Li for their valuable discussions and insights.

\section*{Funding}
ZM, RD, and LZ were partially supported by the National Natural Science Foundation of China (Grant No. 12271360). LZ was also partially supported by the Fundamental Research Funds for the Central Universities. RD was also partially supported by the Jiangsu Provincial Scientific Research Center of Applied Mathematics under Grant No. BK20233002. 

\bibliographystyle{IMANUM-BIB}
\bibliography{references}

\appendix

\section{Appendix}\label{sec:Appendix}
\subsection{Inequalities}

We recall the following key inequalities, which are used frequently in the analysis.

\begin{lemma}[Discrete Gronwall's inequality \cite{discrete1990gronwall}]\label{Gronwall}
Let $\tau$, $B$ and $a_k$, $b_k$, $c_k$, $\gamma_k$ be nonnegative numbers for all $k>0$
\begin{equation*}
    a_n +\tau  \sum \limits _{k=0} ^{n} b_k  \le \tau  \sum \limits _{k=0} ^{n} \gamma_k a_k + \tau  \sum \limits _{k=0} ^{n} c_k + B, ~n \geq 0 .
\end{equation*}
Suppose that $\tau \gamma_k < 1$ with $\sigma_k= (1-\tau \gamma_k)^{-1}$. Then,
\begin{equation*}
    a_n +\tau \sum \limits _{k=0} ^{n} b_k \le \exp(\tau \sum \limits _{k=0} ^{n} \gamma_k \sigma_k ) (\tau  \sum \limits _{k=0} ^{n} c_k + B),
\end{equation*}
holds for all $n \geq 0$.
\end{lemma}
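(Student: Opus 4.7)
The plan is to reduce the double-sided hypothesis to a clean one-step recursion on a single auxiliary quantity, then iterate. First I would set $\Phi_n := a_n + \tau \sum_{k=0}^{n} b_k$ and $\Psi_n := B + \tau \sum_{k=0}^{n} c_k$, so that the hypothesis reads $\Phi_n \le \tau \sum_{k=0}^{n} \gamma_k a_k + \Psi_n$. Using the elementary bound $a_k \le \Phi_k$ (valid because $b_j \ge 0$), this upgrades to the closed self-referential form $\Phi_n \le \tau \sum_{k=0}^{n} \gamma_k \Phi_k + \Psi_n$.

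Next, I would introduce $G_n := \tau \sum_{k=0}^{n} \gamma_k \Phi_k + \Psi_n$ with the convention $G_{-1} := B$, so that $\Phi_n \le G_n$ automatically. A direct telescoping computation gives $G_n - G_{n-1} = \tau \gamma_n \Phi_n + \tau c_n \le \tau \gamma_n G_n + \tau c_n$, and since $\tau \gamma_n < 1$ one can rearrange this to the one-step recursion
\[
    G_n \le \sigma_n \bigl( G_{n-1} + \tau c_n \bigr).
\]
Iterating the recursion yields $G_n \le \prod_{k=0}^{n} \sigma_k \cdot B + \sum_{j=0}^{n} \tau c_j \prod_{k=j}^{n} \sigma_k$, and since each $\sigma_k \ge 1$ the tail product is dominated by the full product, producing the uniform bound $G_n \le \prod_{k=0}^{n} \sigma_k \cdot \Psi_n$.

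Finally, I would replace the product by the exponential via the elementary inequality $\sigma_k \le \exp(\tau \gamma_k \sigma_k)$, which is equivalent to $-\ln(1-x) \le x/(1-x)$ on $[0,1)$ and follows by checking that $f(x) := x/(1-x) + \ln(1-x)$ has $f(0)=0$ and $f'(x) = x/(1-x)^2 \ge 0$. Taking the product over $k$ gives $\prod_{k=0}^{n} \sigma_k \le \exp\bigl(\tau \sum_{k=0}^{n} \gamma_k \sigma_k\bigr)$, and chaining $\Phi_n \le G_n \le \prod_k \sigma_k \cdot \Psi_n$ yields the claimed estimate. There is no genuine obstacle: the only subtle point is selecting $G_n$ rather than $\Phi_n$ itself as the object of the recursion, which is forced by the need to control $\tau \gamma_k a_k$ for all $k \le n$ simultaneously, not just at the endpoint.
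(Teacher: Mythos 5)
Your proof is correct. The paper does not supply its own proof of this lemma but only cites it; your argument is in fact the standard one (essentially the Heywood--Rannacher derivation): absorb the $n$-th term $\tau\gamma_n a_n$ into the left side using $\tau\gamma_n<1$, iterate the resulting one-step recursion for $G_n$, bound $\prod_k \sigma_k$ by the exponential via $-\ln(1-x)\le x/(1-x)$, and chain $\Phi_n \le G_n$. All the elementary ingredients — $a_k \le \Phi_k$, $\sigma_k \ge 1$ to dominate the tail products by the full product, and the sign check $f'(x)=x/(1-x)^2\ge 0$ — are verified correctly.
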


\begin{lemma}[Agmon's inequality \cite{agmon2010lectures}]\label{lem:agmon_inequality}
Let \(\Omega \subset \mathbb{R}^d\) be a bounded Lipschitz domain.
\begin{itemize}
    \item[(a)] (Sobolev embedding) If \(m > d/2\), then there exists \(C = C(\Omega, d, m)\) such that
    \[
    \| u \|_{L^\infty(\Omega)} \le C \| u \|_{H^m(\Omega)}, \quad \forall u \in H^m(\Omega).
    \]
    \item[(b)] (Interpolation form) For an integer \(m > d/2\), there exists \(C = C(\Omega, d, m)\) such that
    \[
    \| u \|_{L^\infty(\Omega)} \le C \| u \|_{H^m(\Omega)}^{d/(2m)} \| u \|_{L^2(\Omega)}^{1 - d/(2m)}, \quad \forall u \in H^m(\Omega).
    \]
\end{itemize}
\end{lemma}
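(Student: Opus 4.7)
My plan is to reduce both parts of Agmon's inequality to the whole-space case $\Omega = \mathbb{R}^d$ via a bounded universal extension operator and then carry out the estimates in frequency space using the Fourier transform. For a bounded Lipschitz domain $\Omega$, Stein's (or Calder\'on's) universal extension $E: H^m(\Omega) \to H^m(\mathbb{R}^d)$ is simultaneously bounded from $L^2(\Omega)$ to $L^2(\mathbb{R}^d)$ and from $H^m(\Omega)$ to $H^m(\mathbb{R}^d)$, with constants depending only on $\Omega$, $d$, and $m$. Since $\|u\|_{L^\infty(\Omega)} \le \|Eu\|_{L^\infty(\mathbb{R}^d)}$, it suffices to prove the two estimates for $v := Eu$ on $\mathbb{R}^d$ and then undo the extension.

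For part (a), I would start from the Fourier inversion bound $\|v\|_{L^\infty(\mathbb{R}^d)} \le (2\pi)^{-d/2}\|\widehat{v}\|_{L^1(\mathbb{R}^d)}$ and insert the weight $(1+|\xi|^2)^{m/2}$:
\[
\|\widehat{v}\|_{L^1} = \int_{\mathbb{R}^d} (1+|\xi|^2)^{-m/2}\,(1+|\xi|^2)^{m/2}\,|\widehat{v}(\xi)|\,d\xi.
\]
Cauchy--Schwarz then gives $\|\widehat{v}\|_{L^1} \le \bigl(\int (1+|\xi|^2)^{-m} d\xi\bigr)^{1/2} \|v\|_{H^m(\mathbb{R}^d)}$, where the integral converges exactly when $m > d/2$. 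This yields (a) after combining with the extension bound.

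For the interpolation form (b), the key idea is to split the frequency integral at a radius $R>0$ chosen later:
\[
\|\widehat{v}\|_{L^1} \le \int_{|\xi|<R}|\widehat{v}(\xi)|\,d\xi + \int_{|\xi|\ge R}|\widehat{v}(\xi)|\,d\xi.
\]
On the low-frequency piece, Cauchy--Schwarz with the trivial weight gives a bound of order $R^{d/2}\|v\|_{L^2}$. On the high-frequency piece, Cauchy--Schwarz against $(1+|\xi|^2)^{-m/2}$ gives a bound of order $R^{d/2-m}\|v\|_{H^m}$, since $\int_{|\xi|\ge R}(1+|\xi|^2)^{-m}d\xi \sim R^{d-2m}$ for $m > d/2$. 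Optimizing by choosing $R = \bigl(\|v\|_{H^m}/\|v\|_{L^2}\bigr)^{1/m}$ balances the two terms and produces the claimed interpolation bound $\|v\|_{L^\infty} \le C\|v\|_{H^m}^{d/(2m)}\|v\|_{L^2}^{1-d/(2m)}$.

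The analytic heart of the argument — Fourier splitting with weighted Cauchy--Schwarz — is classical and short; the main subtlety lies in the reduction step. It requires an extension operator that preserves both $L^2$ and $H^m$ bounds with the same constant, so that the interpolation scaling in (b) is not destroyed. Stein's universal extension for Lipschitz domains provides exactly this property, so no independent choice of extension has to be made for each norm, and the frequency-space optimization carries over verbatim to $\Omega$.
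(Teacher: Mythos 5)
Your argument is correct, but there is nothing in the paper to compare it against: Lemma~\ref{lem:agmon_inequality} is stated in the appendix purely as a cited classical fact, attributed to \cite{agmon2010lectures}, and the paper gives no proof. What you have done is supply a self-contained proof where the paper defers to the literature.

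Your proof itself is sound and is one of the standard routes to Agmon's inequality. The whole-space estimates via weighted Cauchy--Schwarz in frequency space are correct: for part (a), $\int_{\mathbb{R}^d}(1+|\xi|^2)^{-m}\,d\xi < \infty$ exactly when $m > d/2$; for part (b), the low-frequency piece is $O\bigl(R^{d/2}\|v\|_{L^2}\bigr)$ and the high-frequency piece is $O\bigl(R^{d/2-m}\|v\|_{H^m}\bigr)$, and your choice $R = (\|v\|_{H^m}/\|v\|_{L^2})^{1/m}$ automatically satisfies $R\ge 1$ (since $\|v\|_{L^2}\le\|v\|_{H^m}$), so the tail estimate $\int_{|\xi|\ge R}(1+|\xi|^2)^{-m}\,d\xi \le C R^{d-2m}$ is legitimate. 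You are also right that the subtle point in the reduction to $\mathbb{R}^d$ is the need for a \emph{single} extension operator bounded simultaneously on $L^2$ and $H^m$, so that the two norms appearing in the interpolation are transported by the same constant-order factors; Stein's universal extension for Lipschitz domains does exactly that, whereas a Calder\'on-type extension built separately for each order would not immediately give this. One cosmetic remark: part~(a) of the lemma does not restrict $m$ to be an integer, and your Fourier argument in fact covers real $m>d/2$ provided one interprets $H^m(\Omega)$ as the Bessel-potential space and uses that Stein's operator remains bounded on fractional orders; for the paper's use (integer $m$) this point is moot.
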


\begin{lemma}[Gagliardo–Nirenberg inequality \cite{nirenberg1966extended}]\label{lem:GN_inequality}
Let $\Omega \subset \mathbb{R}^d$ be a bounded Lipschitz domain.  
Let $m \in \mathbb{N}$, $j \in \mathbb{N}_0$ with $0 \le j < m$, 
and let $p, q, r \ge 1$ satisfy
\[
\frac{1}{p} = \frac{j}{d} + \theta\Bigl(\frac{1}{r} - \frac{m}{d}\Bigr) 
            + (1-\theta)\frac{1}{q},
\]
with
\[
\frac{j}{m} \le \theta \le 1,
\]
and exclude the case $m - j - \frac{d}{r} \in \mathbb{N}_0$ when $\theta = 1$. Then for every $u \in W^{m,r}(\Omega) \cap L^q(\Omega)$,
\[
\| D^j u \|_{L^{p}} 
\;\le\; C \, \| u \|_{W^{m,r}}^{\theta} \; \| u \|_{L^{q}}^{1-\theta},
\]
where the constant $C > 0$ depends on the parameters $j, m, d, q, r, \theta$, on the domain $\Omega$, but not on $u$.
\end{lemma}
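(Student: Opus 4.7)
The plan is to reduce to the whole-space case $\Omega = \mathbb{R}^d$ by means of a universal extension operator. Since $\Omega$ is a bounded Lipschitz domain, Stein's extension theorem provides a bounded linear operator $E: W^{m,r}(\Omega) \to W^{m,r}(\mathbb{R}^d)$ that is simultaneously bounded from $L^q(\Omega)$ to $L^q(\mathbb{R}^d)$ for every $q \in [1,\infty]$, with operator norm depending only on $\Omega$ and the regularity indices. Applying the whole-space version of the inequality to $Eu$ and restricting to $\Omega$ then yields the claim, modulo absorbing the extension constants into $C$. The scaling hypothesis $1/p = j/d + \theta(1/r - m/d) + (1-\theta)/q$ is dimensionally forced: testing both sides against the dilation $u \mapsto u(\lambda \cdot)$ recovers exactly this relation, so it must be assumed.

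Once on $\mathbb{R}^d$, I would handle the two endpoints first. At $\theta = 1$ the statement reduces to the pure Sobolev embedding $\|D^j u\|_{L^p} \le C \|u\|_{W^{m,r}}$, provable via the Riesz potential representation $u = I_m\bigl((-\Delta)^{m/2} u\bigr)$ combined with the Hardy--Littlewood--Sobolev inequality, or equivalently via Littlewood--Paley decomposition and Bernstein's inequalities. At $\theta = j/m$ one obtains the classical interpolation among successive derivatives, which follows by iterating the one-derivative estimate $\|Dv\|_{L^{2p}}^2 \le C \|v\|_{L^p} \|D^2 v\|_{L^p}$ (itself a consequence of integration by parts: $\int |Dv|^2 |Dv|^{2p-2} = -\int v \cdot D\bigl(Dv \, |Dv|^{2p-2}\bigr)$, then Hölder).

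For the intermediate range $\theta \in (j/m, 1)$, I would employ a Littlewood--Paley argument: decompose $u = \sum_{k \in \mathbb{Z}} \Delta_k u$ into dyadic frequency blocks, apply Bernstein's inequality $\|D^j \Delta_k u\|_{L^p} \le C 2^{kj} \|\Delta_k u\|_{L^p}$, split the sum at a threshold $k_0$, bound high frequencies by $\|u\|_{W^{m,r}}$ and low frequencies by $\|u\|_{L^q}$, then optimize in $k_0$. The optimization precisely recovers the stated product $\|u\|_{W^{m,r}}^\theta \|u\|_{L^q}^{1-\theta}$ with the prescribed exponent $\theta$. Alternatively, one can invoke Riesz--Thorin (or complex) interpolation directly on the linear map $u \mapsto D^j u$ between the two endpoint estimates established above.

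The main obstacle is the sharp endpoint analysis responsible for the exclusion clause: at $\theta = 1$ with $m - j - d/r \in \mathbb{N}_0$ we reach a critical Sobolev embedding (e.g.\ $W^{1,d}(\mathbb{R}^d) \not\hookrightarrow L^\infty(\mathbb{R}^d)$), where only a substitute BMO- or exponential-integrability inequality is available. Away from these critical integers the Littlewood--Paley argument is uniform and the constant $C$ depends continuously on $j,m,d,q,r,\theta$; near them the dyadic sum diverges logarithmically, forcing the exclusion. A secondary but important technical point is that Stein's extension must be a single operator controlling all the relevant norms simultaneously, so that the constants depend only on $\Omega$ and the parameters, not on $u$.
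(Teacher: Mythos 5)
The paper does not prove this lemma at all: it is quoted verbatim from the classical literature (Nirenberg) as an auxiliary tool in the Appendix, so there is no internal proof to compare against. Your strategy—Stein extension from the Lipschitz domain to $\mathbb{R}^d$ (a single operator bounded on $W^{k,p}$ for all $k\ge 0$, $1\le p\le\infty$, so the constants depend only on $\Omega$), followed by the whole-space Gagliardo--Nirenberg inequality and restriction—is the standard route and is sound in outline; in particular, using the inhomogeneous $\|u\|_{W^{m,r}}$ on the right-hand side is exactly what makes the reduction to $\mathbb{R}^d$ painless, since the homogeneous form fails on bounded domains (constants, polynomials).

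Two concrete points in your sketch would not survive scrutiny as written. First, the one-derivative estimate you propose to iterate, $\|Dv\|_{L^{2p}}^2 \le C\|v\|_{L^p}\|D^2v\|_{L^p}$, is dimensionally inconsistent on $\mathbb{R}^d$ (rescaling $v\mapsto v(\lambda\cdot)$ makes the two sides scale as $\lambda^{2-d/p}$ and $\lambda^{2-2d/p}$), hence false; the correct building blocks are $\|Dv\|_{L^{p}}^2 \le C\|v\|_{L^p}\|D^2v\|_{L^p}$ or $\|Dv\|_{L^{2p}}^2 \le C\|v\|_{L^\infty}\|D^2v\|_{L^p}$, and your integration-by-parts identity yields the latter after H\"older with exponents $\bigl(\infty,p,\tfrac{2p}{2p-2}\bigr)$, not the version you wrote. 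Second, Riesz--Thorin cannot be applied ``directly'' to the map $u\mapsto D^j u$ between your two endpoint estimates: those estimates are not bounds of a linear operator between fixed Lebesgue spaces, since their right-hand sides are products of different norms of the same $u$; multiplicative interpolation of this kind requires either the Littlewood--Paley threshold argument you describe or genuine real-interpolation/Besov-embedding machinery. The threshold argument itself also needs care in the full parameter range admitted by the statement: Bernstein's ``raise the integrability'' step requires $p\ge\max(q,r)$ (otherwise one must first interpolate within each dyadic block), the endpoints $q=1$, $r=1$ must be handled blockwise rather than through square-function characterizations, and the low-frequency block is controlled by $\|u\|_{L^q}$ only because the threshold $k_0$ is finite. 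None of this changes the verdict that your plan is the standard proof, but these repairs are needed before it is a proof.
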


\begin{lemma}[Calder\'on--Zygmund inequality \cite{maugeri2000elliptic}]\label{lem:C_Z_inequality}
Let \(\Omega \subset \mathbb{R}^d\) be a bounded and convex domain of class \(C^{2}\). Let $L$ be a uniformly elliptic operator with coefficients satisfying the uniform ellipticity condition. Then for any $1 < p < \infty$, there exists a constant \(C = C(d, p, \Omega) > 0\) such that
\[
\|u\|_{W^{2,p}(\Omega)} \leq C \|\Delta u\|_{L^p(\Omega)}.
\]
\end{lemma}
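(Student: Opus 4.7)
The plan is to derive the $W^{2,p}$ a priori bound from the Calderón--Zygmund theorem for singular integrals, combined with a localization-and-flattening argument to transfer the estimate from $\mathbb{R}^d$ to the bounded convex $C^2$ domain $\Omega$. The three main ingredients are: (i) the representation of the second derivatives of the Newtonian potential as Calderón--Zygmund singular integrals of the Laplacian; (ii) the $L^p$-boundedness of such singular integrals for $1 < p < \infty$; and (iii) the geometry of convex $C^2$ domains, which governs the behavior of $D^2 u$ in the boundary layer.

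First I would establish the estimate on $\Omega = \mathbb{R}^d$. Let $\Gamma$ denote the fundamental solution of $-\Delta$ and, modulo harmonic correction, write $u = -\Gamma * \Delta u$. Differentiating twice yields $\partial_{ij} u = T_{ij}(\Delta u)$, where $T_{ij}$ is convolution with the kernel $\partial_{ij} \Gamma$. This kernel is homogeneous of degree $-d$, has vanishing mean on spheres, and satisfies H\"ormander's regularity condition, so the classical Calderón--Zygmund theorem gives $\|T_{ij} f\|_{L^p(\mathbb{R}^d)} \le C_p \|f\|_{L^p(\mathbb{R}^d)}$ for every $1 < p < \infty$. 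Summing over $i,j$ produces the whole-space estimate $\|D^2 u\|_{L^p(\mathbb{R}^d)} \le C_p \|\Delta u\|_{L^p(\mathbb{R}^d)}$.

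To transfer this bound to $\Omega$, I would cover $\overline{\Omega}$ with finitely many balls centered either at interior points or at boundary points. On an interior ball, multiply $u$ by a smooth cutoff $\eta$, compute $\Delta(\eta u) = \eta\,\Delta u + 2\nabla\eta\cdot\nabla u + u\,\Delta\eta$, and apply the whole-space estimate to $\eta u$. On a boundary ball, the $C^2$ regularity of $\partial\Omega$ permits a diffeomorphism flattening $\partial\Omega$ into a piece of a hyperplane; after extension by reflection one reduces to a half-space problem and then to the whole-space estimate. A partition of unity then yields
\[
\|D^2 u\|_{L^p(\Omega)} \le C \bigl( \|\Delta u\|_{L^p(\Omega)} + \|u\|_{W^{1,p}(\Omega)} \bigr),
\]
and the lower-order $W^{1,p}$ term is absorbed by exploiting the convexity of $\Omega$. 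In the Hilbertian case $p = 2$ this absorption is carried out by the Miranda--Talenti identity $\|D^2 u\|_{L^2}^2 \le \|\Delta u\|_{L^2}^2$, whose proof is an integration by parts in which the boundary contribution is manifestly nonnegative, thanks to the nonnegative second fundamental form of a convex surface; for general $p$ one extrapolates from the $p = 2$ estimate in Muckenhoupt-weighted spaces together with the localized $L^p$ bound above, following the framework developed in \cite{maugeri2000elliptic}.

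The main obstacle is the boundary analysis for $p \neq 2$. The Miranda--Talenti identity is intrinsically Hilbertian and does not extend to non-Hilbertian exponents by a simple change of norm. To remove the lower-order term one must either perform refined singular-integral estimates on the half-space that exploit the geometric sign condition provided by convexity, or extrapolate from $p = 2$ via weighted-norm theory. The $C^2$ regularity guarantees that the flattening diffeomorphism has bounded second derivatives so that the Calderón--Zygmund structure is preserved under change of variables, while convexity ensures that the resulting boundary curvature contributions carry a favorable sign — together these two features yield the clean form of the estimate stated in Lemma~\ref{lem:C_Z_inequality}.
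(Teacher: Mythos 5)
The paper does not actually prove this lemma: it is quoted as a known result from \cite[Section 1.4]{maugeri2000elliptic}, so there is no internal proof to compare against. Your overall strategy --- the whole-space estimate $\|\partial_{ij}\Gamma * f\|_{L^p} \le C_p\|f\|_{L^p}$ from Calder\'on--Zygmund kernel theory, followed by interior cutoffs, boundary flattening by a $C^2$ diffeomorphism, and a partition of unity giving $\|D^2u\|_{L^p(\Omega)} \le C\bigl(\|\Delta u\|_{L^p(\Omega)} + \|u\|_{W^{1,p}(\Omega)}\bigr)$ --- is the classical route underlying the cited reference and is sound up to that point.

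The genuine gap is the final absorption step. As stated (and as you set out to prove it, with no boundary condition imposed) the inequality is simply false: any nonconstant affine or harmonic $u$ has $\Delta u = 0$ but $\|u\|_{W^{2,p}(\Omega)} > 0$, so no argument can remove the lower-order term without first fixing a boundary condition and a normalization (in this paper the lemma is invoked for the Neumann problem, $\nabla \mathbf{M}^{n+1}\cdot\mathbf{n}=\mathbf{0}$, with the zeroth-order structure of the elliptic system supplying invertibility). Your proposed mechanism does not do this: the Miranda--Talenti inequality $\|D^2u\|_{L^2} \le \|\Delta u\|_{L^2}$ is a global identity for functions already satisfying Dirichlet (or Neumann) conditions on a convex domain, and it does not eliminate the $\|u\|_{W^{1,p}}$ term from the localized estimate; moreover, ``extrapolating from $p=2$ in Muckenhoupt-weighted spaces'' is not available here, since Rubio de Francia extrapolation requires weighted bounds at one exponent for a full class of $A_p$ weights, which you have not established --- an unweighted $L^2$ bound alone gives nothing for $p\neq 2$. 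The standard repair is: (i) state the boundary condition and work modulo constants (or add $\|u\|_{L^p}$ on the right), (ii) obtain the half-space boundary estimates for all $1<p<\infty$ directly from Calder\'on--Zygmund/Agmon--Douglis--Nirenberg theory --- for a $C^2$ boundary convexity plays no role in this step --- and (iii) remove the lower-order term by a uniqueness-plus-compactness contradiction argument or a Poincar\'e-type inequality adapted to the chosen normalization.
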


\subsection{Variants of Backward Euler Schemes}
\label{subsec: Some Other Backward Euler Schemes}

\subsubsection{Gao's Scheme}
 \cite{gao2014optimal} proposed a further simplification of the right-hand side treatment:
\begin{equation}
    \frac{1}{\tau}(\m_h^{n+1} - \m_h^{n},\v_h) - \alpha( \h_{\eff}^{n+1},\v_h) + ( \m_h^{n} \times \h_{\eff}^{n+1} , \v_h) = -\alpha( (\m^{n} \cdot \h_{\eff}^{n}) \cdot \m_h^{n} , \v_h).
    \label{GaoFEM}
\end{equation}
This scheme achieves optimal $L^2$ and $H^1$ error estimates without restrictive time-step conditions (i.e., $\tau = O(h^\alpha)$ is not required). The relaxation of the unit length constraint yields
\begin{equation}
    \|1 - |\m_h^n|^2 \|_{L^2} \le C_0 (\tau + h^{2}),
\end{equation}
demonstrating that the deviation from the unit sphere remains controlled.

\subsubsection{An's Scheme}
 \cite{anrongbackward} introduced a projection-based approach defined by
\begin{equation}
    \frac{1}{\tau}(\tilde{\m}_h^{n+1} - \m_h^{n},\v_h) - \alpha( \h_{\eff}^{n+1},\v_h) + ( \m_h^{n} \times \h_{\eff}^{n+1} , \v_h) = -\alpha ( (\m_h^{n} \cdot \Bar{\h}_{\eff}^{n+1}) \cdot \m_h^{n} , \v_h),
    \label{AnFEM}
\end{equation}
followed by a post-processing projection step $\m_h^{n+1} = \tilde{\m}_h^{n+1}/|\tilde{\m}_h^{n+1}|$. Here, $\Bar{\h}_{\eff}^{n+1} = \nabla \tilde{\m}_h^{n+1} \cdot \kappa \nabla \m_h^n$. This scheme achieves an optimal $L^2$ error estimate. Although the theoretical analysis requires $\tau = O(\eps_0 h)$ and initial data $\m_0 \in H^{r+1}$ with the polynomial degree of the finite element basis functions $r \geq 2$, numerical experiments indicate that these conditions can be relaxed in practice.

\end{document}